\newtheorem{thm}{Theorem}[section]
\newtheorem{prop}[thm]{Proposition}
\theoremstyle{definition}
\newtheorem*{ex}{Example}
\newtheorem*{rem}{Remark}
\newtheorem*{rems}{Remarks}
\newtheorem*{notation}{Notation}
\DeclareMathOperator{\Div}{div}
\DeclareMathOperator{\rot}{rot}
\newcommand{\NN}{\mathbb{N}}
\newcommand{\sU}{\mathcal{U}}
\newcommand{\sV}{\mathcal{V}}
\newcommand{\sW}{\mathcal{W}}
\newcommand{\sP}{\mathcal{P}}
\newcommand{\sQ}{\mathcal{Q}}
\newcommand{\sS}{\mathcal{S}}
\newcommand{\sF}{\mathcal{F}}
\newcommand{\RR}{\mathbb{R}}
\newcommand{\innpr}{\cdot}
\begin{document}

\title{Steady three-dimensional rotational flows: an approach via two stream functions and Nash-Moser iteration}

\author{
B. Buffoni\thanks{Institut de math\'ematiques, Station 8, Ecole Polytechnique
F\'ed\'erale de Lausanne, 1015 Lausanne, Switzerland
} 
\and E. Wahl\'en\footnote{Centre for Mathematical Sciences, Lund University, PO Box 118, 22100 Lund, Sweden}
}

\date{December 20, 2018}

\maketitle

\begin{abstract}
We consider the stationary flow of an inviscid and incompressible fluid of constant density in the region $D=(0, L)\times \RR^2$. We are concerned with flows 
that are periodic in the second and third variables
and that have prescribed flux through each point of the boundary $\partial D$.
The Bernoulli equation states that the 
``Bernoulli function''
$H\coloneqq \frac 1 2 |v|^2+p$ (where $v$ is the velocity field and $p$ the pressure) is constant along stream lines, that is, 
each particle is associated with a particular value of $H$. We also prescribe the value of $H$ on $\partial D$.
The aim of this work is to develop an existence theory
near a given constant solution. It relies on writing the velocity field in the form $v=\nabla f\times \nabla g$ and deriving a degenerate nonlinear elliptic system for $f$ and $g$. This system is solved using the Nash-Moser method, as developed for the problem
of isometric embeddings of Riemannian manifolds;
see e.g.\ the book by Q. Han and J.-X. Hong (2006). 
Since we can allow $H$ to be non-constant on $\partial D$, our theory includes three-dimensional flows with non-vanishing vorticity.
\smallskip

\noindent
{\em Keywords:}
incompressible flows, vorticity, boundary conditions, Nash-Moser
iteration method.

\noindent
{\em Mathematics subject classification (AMS, 2010):}
35Q31, 76B03, 76B47, 35G60, 58C15.

\end{abstract}

\section{Introduction}
The Euler equation for an inviscid and incompressible fluid
of constant density is given by
$$(v\innpr \nabla)v=-\nabla p,~~\Div v=0,$$
if in addition the velocity field $v$ is independent of time.
As we are concerned with stationary flows on $D=(0,L)\times \RR^2$
that are periodic in the second and third variables, it is useful
to introduce the cell of the periodic lattice
$$\sP=(0,L)\times (0,P_1)\times(0,P_2),$$
where $L>0$ and the periods $P_1, P_2>0$ are given; in particular integrations
will mainly be over $\sP$ and maxima of continuous functions considered on $\overline{\sP}$.
Any constant vector field $\bar v$ is a solution on 
$D$ with constant pressure $\bar p$.
Such a field can always be written in the form $\bar v=\nabla \bar f\times \nabla \bar g$,
for some linear functions $\bar f, \bar g$.
If the real-valued functions 
$$
(x,y,z)\mapsto f_0(x,y,z),\quad
 (x,y,z)\mapsto g_0(x,y,z),
\quad (x,y,z)\in D,
$$
are near $0$ and $(P_1,P_2)$-periodic in $(y,z)$, 
one may try looking for a velocity field
of the form
$$v^*=\nabla (\bar f +f_0+f^*)\times\nabla (\bar g +g_0+g^*)$$ 
for unknown functions $f^*$ and $g^*$
that vanish at the boundaries $x=0$ and $x=L$. The functions $f_0$ and $g_0$
can be interpreted
 as encoding a perturbation of the boundary conditions at $x=0$ and $x=L$
given by $\bar f$ and $\bar g$.
If $f_0$ and $g_0$ vanish at $x=0$ and $x=L$, then nothing is gained
with respect to the case $f_0=g_0=0$ on $D$.

In the following theorem, 
the Sobolev spaces $W^{n,p}_{loc}(D)$ and $H^n_{loc}(D)$ 
consist of functions defined on $D$ such that, 
when restricted to every bounded open subset $D_b\subset D$, 
they belong to $W^{n,p}(D_b)$ and $H^n(D_b)$. Note that, in contrast with the usual 
definition, $\overline {D_b}$ is not required to be included in $D$. Moreover, $\sQ$ is the parallelogram in $\RR^2$ spanned by $RP_1 e_1$ and  $RP_2e_2$, where
\[
R=\begin{pmatrix}
\partial_2 \bar f & \partial_3 \bar f\\
\partial_2 \bar g & \partial_3 \bar g
\end{pmatrix},
\]
is the Jacobian matrix of $(\bar f,\bar g)$ with respect to $(y,z)$ and $\NN_0=\{0,1,2,\ldots\}$.
\begin{thm}
\label{thm: main in intro}
Let $j\in \NN_0$ and assume that the first component of $\bar v$ does not vanish. 
Then it is possible to choose $\bar \epsilon>0$ such that if 
\begin{itemize}
\item[$\bullet$]
$H_0\in C^{11+j}(\RR^2)$ is periodic with respect to the lattice in $\RR^2$ generated by $RP_1 e_1$ and $RP_2e_2$
(not necessarily the fundamental periods, this remark holding generally throughout),
\item[$\bullet$]
$c_1, c_2\in \RR$,
\item[$\bullet$]
$f_0,g_0 \in H^{13+j}_{loc}(D)=W^{13+j,2}_{loc}(D)$,  $P_1$-periodic in $y$ and $P_2$-periodic in $z$,
\item[$\bullet$]
$\displaystyle \|(f_0,g_0)\|_{H^{13+j}(\sP)}^2+\|H_0\|^2_{C^{11+j}(\overline \sQ)}+|c|^2 <\bar \epsilon^2$,
\end{itemize}
then there exists
$(f^*,g^*)\in H^{6+j}_{loc}(D)$ satisfying
\begin{itemize}
\item[$\bullet$]
$ f^*,g^*$ are $P_1$-periodic in $y$
and $P_2$-periodic in $z$,
\item[$\bullet$]
\begin{equation}
\text{$f^*,g^*$ vanish when $x\in\{0,L\}$,}
\label{eq: bdy cdn}
\end{equation}
\item[$\bullet$]
$v^*\coloneqq \nabla (\bar f +f_0+f^*)\times\nabla (\bar g +g_0+g^*)$
is a solution to the  Euler equation 
$$(v^*\innpr \nabla)v^*=-\nabla p^*,~~\Div v^*=0~\text{ on }~D,$$
with
\begin{equation}
 p^*=-\frac 1 2| v^*|^2 +H(\bar f+f_0+f^*, \bar g + g_0 +g^*)
~\text{ and }~H(f,g)=c_1f+c_2 g+H_0(f,g)~\text{for all } f,g\in\RR.
\label{eq: Bernoulli eqn}
\end{equation}
\end{itemize}
Moreover, there exists a constant $C>0$ (independent of $(f_0, g_0)$, $H_0$ and $c$) such that 
$$\|(f^*, g^*)\|_{H^{6+j}(\sP)}\le C\bar \epsilon.$$
The solution is locally unique in the following sense. Let $H$ be as above (but $H_0$ can be assumed of class $C^2$ only),  $f,g, \widetilde f, \widetilde g\in C^3(\overline D)$ with $(f-\bar f, g-\bar g)$, $(\widetilde f-\bar f, \widetilde g-\bar g)$ both  $(P_1, P_2)$-periodic in $y$ and $z$, and 
$$
(f(x,y,z),g(x,y,z))=(\widetilde f(x,y,z),\widetilde g(x,y,z)),~\text{for all } (x,y,z)\in \{0,L\}\times \RR^2.
$$
Assume  that  $v=\nabla f\times \nabla g$ and $\widetilde v=\nabla \widetilde f\times \nabla \widetilde g$ are both solutions to the Euler equation with pressures $-\frac12 |v|^2+H(f,g)$ and $-\frac12 |\widetilde v|^2+H(\widetilde f,\widetilde g)$, respectively. 
If $(\nabla f,\nabla g)$ and
$(\nabla \widetilde f,\nabla \widetilde g)$ are 
in a sufficiently small open convex neighborhood of $(\nabla \bar f,\nabla \bar g)$
in  $C^2(\overline \sP)$ and $\|H_0\|_{C^2(\overline \sQ)}$ is sufficiently small,  
then $(f,g)=(\widetilde f,\widetilde g)$ on $[0,L]\times \RR^2$.
\end{thm}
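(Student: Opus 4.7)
The proof falls into an existence part, to be treated by Nash--Moser, and a local uniqueness part, to be handled by a soft argument based on the streamline structure. My plan would begin by reformulating the Euler equation as a system for $(f^*,g^*)$. Writing $f=\bar f+f_0+f^*$, $g=\bar g+g_0+g^*$ and $v=\nabla f\times\nabla g$, the condition $\Div v=0$ is automatic since $\Div(\nabla F\times\nabla G)\equiv 0$ for any smooth $F,G$. The momentum equation with pressure \eqref{eq: Bernoulli eqn}, rewritten via $(v\innpr\nabla)v=\nabla(\tfrac12|v|^2)-v\times\rot v$, becomes $\nabla H(f,g)=v\times\rot v$; taking inner products with $\nabla f$ and $\nabla g$ and using $v\innpr\nabla f=v\innpr\nabla g=0$ eliminates the pressure and leaves a quasilinear second-order system $\Phi(f^*,g^*)=0$. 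Its linearization at the base state has a principal symbol that vanishes along the characteristic direction $\bar v$, placing the problem squarely in the degenerate-elliptic territory where Nash--Moser is needed.

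For the existence I would implement the Han--Hong Nash--Moser scheme. The linearization $L(f,g)$ does not admit an exact bounded inverse, but it has a right inverse losing a fixed finite number of derivatives with tame estimates in every high-$H^s$ norm; I would build it by a hyperbolic reduction along streamlines followed by an elliptic correction in the transverse plane, matched to the homogeneous boundary condition \eqref{eq: bdy cdn}. The necessary smoothing operators $S_\theta$ on $H^s_{loc}(D)$ can be built by odd reflection across $\{x=0,L\}$ combined with Fourier truncation in $(y,z)$, so as to preserve both the boundary vanishing and the periodicity; these satisfy the usual direct and inverse tame inequalities. The modified Newton iteration $u_{n+1}=u_n-S_{\theta_n}L(u_n)^{-1}\Phi(u_n)$ with geometrically increasing $\theta_n$ then converges in $H^{6+j}(\sP)$; the gap from the data regularity $H^{13+j}$ down to $H^{6+j}$ absorbs the derivative loss of $L^{-1}$ plus the standard Nash--Moser overhead, and the smallness threshold $\bar\epsilon$ comes from closing the quadratic error estimate at each step.

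For local uniqueness I would use that the first component of $\bar v$ equals $\det R\neq 0$, so that whenever $(\nabla f,\nabla g)$ lies in a small $C^1$-neighborhood of $(\nabla\bar f,\nabla\bar g)$, the map $\Psi_x\colon(y,z)\mapsto (f(x,y,z),g(x,y,z))$ is a local diffeomorphism for every $x\in[0,L]$, and similarly for $\widetilde\Psi_x$. Changing variables to $(x,\phi,\psi)$ with $\phi=f$, $\psi=g$ straightens every streamline to a coordinate line $\{\phi=\text{const},\psi=\text{const}\}$, and since $f=\widetilde f$ and $g=\widetilde g$ on $\{x=0,L\}\times\RR^2$, the two coordinate systems agree on the boundary. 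Pulling $v$ and $\widetilde v$ back to this common chart and subtracting the Euler equations, which are driven by the same $H$, yields a linear first-order system in $x$ for the differences of the pushforward metric coefficients with coefficients depending on the convex interpolant $(f_t,g_t)=(1-t)(f,g)+t(\widetilde f,\widetilde g)$, which the neighborhood hypothesis keeps non-degenerate. The differences vanish at $x=0$, so Gronwall's inequality in $x$ forces $(f,g)=(\widetilde f,\widetilde g)$ throughout $[0,L]\times\RR^2$.

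The hardest step, to my mind, is the construction of the tame right inverse for $L$ compatible with \eqref{eq: bdy cdn}. The degenerate direction $\bar v$ is transverse to $\{x=0,L\}$, so the streamwise integration that inverts the degenerate symbol has to be matched with Dirichlet data at both ends of every streamline, and the smoothing operators must respect these conditions without creating spurious compatibility obstructions. This step is precisely what dictates the specific regularity exponents $11+j$, $13+j$ and $6+j$ appearing in the hypothesis and conclusion, and where the analogy with the Han--Hong treatment of isometric embedding is indispensable.
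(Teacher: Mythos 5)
There are two genuine gaps. First, your plan for the crucial step --- a tame right inverse of the linearization --- rests on a mischaracterization of the degeneracy. The principal symbol does \emph{not} vanish along $\bar v$: at the base state (say $\bar f=y$, $\bar g=z$, $\bar v=e_1$) the second variation of \eqref{eq: integral functional} controls precisely $F_x^2+G_x^2+(F_y+G_z)^2$, i.e.\ the streamwise derivatives $(v\innpr\nabla F, v\innpr\nabla G)$ are the \emph{good} ones, while the loss of coercivity is in the transverse directions (e.g.\ $F=\cos z$, $G=0$ kills the principal part, as noted in Section~\ref{sec: linearization}). Consequently a ``hyperbolic reduction along streamlines followed by an elliptic correction in the transverse plane'' cannot be carried out: there is no ellipticity in the transverse plane to correct with, and the streamline direction needs no transport-type inversion. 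The paper instead exploits the variational structure: positive definiteness of $B_{(f,g)}$ (Theorem~\ref{thm: quadratic part}, Proposition~\ref{prop: first estimate}), Kohn--Nirenberg elliptic regularization $\epsilon\Delta$ with estimates uniform in $\epsilon$, tangential ($y,z$) differentiation of the weak equation with the commutator estimates of Theorem~\ref{thm: operator A}, and recovery of $x$-derivatives from the equation itself (Proposition~\ref{prop: solve second partial in x}), leading to the tame estimate of Theorems~\ref{thm: tame inverse} and \ref{thm: general estimate}. Your proposal simply asserts the existence of such an inverse with tame estimates; since this is the entire analytic content of the existence proof, the gap is essential. (Your Nash--Moser shell itself, including the exponent bookkeeping $13+j\to 6+j$, is consistent with the Han--Hong scheme the paper uses, but it cannot be closed without the inverse.)

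Second, the uniqueness argument fails. After straightening streamlines you propose to view the difference of the two solutions as satisfying ``a linear first-order system in $x$'' with zero data at $x=0$ and to conclude by Gronwall. But \eqref{eq: deux} is second order in $x$ and the problem is a two-point boundary value problem: the hypothesis gives $(f,g)=(\widetilde f,\widetilde g)$ on $\{0,L\}\times\RR^2$, not equality of their $x$-derivatives at $x=0$, so no initial-value/Gronwall argument starting at $x=0$ can be launched (if it could, the condition at $x=L$ would be redundant, which it is not). The paper's uniqueness (Theorem~\ref{thm: uniqueness}) uses exactly the feature your argument ignores: the difference vanishes on \emph{both} boundary components, hence is an admissible variation, and the strict convexity of the functional along the segment joining the two solutions --- i.e.\ the positivity of $B_{(f_\theta,g_\theta)}$ from Theorem~\ref{thm: quadratic part} --- rules out two distinct critical points with the same boundary values.
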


\begin{rems}$ $

\begin{itemize}
\item
Observe that $\nabla_{(f,g)} H(\bar f+f_0+f^*, \bar g+g_0+g^*)$  is $P_1$-periodic in $y$
and $P_2$-periodic in $z$.  
In general the choice $(f^*,g^*)=-(f_0,g_0)$ is not allowed,
as $(f^*,g^*)$ is required to vanish at $x=0$ and $x=L$, but not
$(f_0,g_0)$.
When $H$ is constant, the choice $(f^*, g^*)=-(f_0, g_0)$ leads to the constant solution $v^*=\bar v$,
provided that $f_0$ and $g_0$ vanish when $x\in\{0,L\}$.
However, when $H$ is not constant 
\eqref{eq: bdy cdn} and \eqref{eq: Bernoulli eqn}
do not allow to choose $(f^*,g^*)=-(f_0,g_0)$. Indeed, if $(f^*,g^*)=-(f_0,g_0)$, then $v^*=\bar v$ and $p^*$ should be constant, which is not compatible with \eqref{eq: Bernoulli eqn} when $H$ is not constant.
\item
If $H_0$, $f_0$ and $g_0$ are $C^\infty$ smooth, we obtain solutions of arbitrarily high regularity. However, we don't necessarily obtain $C^\infty$ smooth solutions since $\overline{\epsilon}$ depends on $j$. 
It might be possible to obtain smooth solutions by applying
other versions of the Nash-Moser theorem, for example an
analytic version, but that's outside the scope of the paper.
\item The uniqueness assertion implies that the solution $(\bar f+f_0+f^*, \bar g+g_0+g^*)$ only depends on $f_0$ and $g_0$ through their boundary values.
\item On the other hand, it is possible for two different sets of data to give rise to the same velocity field $v$ (see the Appendix for more details).
\end{itemize}
\end{rems}

The following example illustrates the relationship
with Beltrami flows (flows such that, at each point of $D$, 
the vorticity is parallel to the velocity) and the role of the boundary conditions at $x=0$ and $x=L$.

\begin{ex}
Let $\bar f(x,y,z)=y$, $\bar g(x,y,z)=z$, $c_1,c_2=0$ and $H_0=0$,
so that $\bar v=(1,0,0)$.
Let $f_0(x,y,z)=\delta x\sin(2\pi z/P_2)$ and $g_0=0$, and
let $(f^*,g^*)$ be given by Theorem \ref{thm: main in intro} 
(for $|\delta|$ small enough).
Remember that $f^*$ and $g^*$ vanish at $x=0$ and $x=L$.
The pointwise flux of $v^*$ at $x=0$ and $x=L$ is the constant $1$:
$$v_1^*=\partial_y(\bar f+f_0)\partial_z(\bar g+g_0)-
\partial_z(\bar f+f_0)\partial_y(\bar g+g_0)=1.$$
Let us prove that $v^*$ is not irrotational by assuming the opposite.
Then $v^*_1$ would be a $(P_1,P_2)$-periodic function in $y$ and $z$
that is harmonic. By the maximum principle, $v^*_1=1$ and
thus $(v^*_2,v^*_3)$ would be $x$-independent.
The functions $v_2^*$ and $v^*_3$ would also be harmonic and thus
they would be constant, and $v^*$ would be a constant vector field. 
Hence the map that sends a fluid parcel when $x=0$ to its position when $x=L$
would be a translation. But this is impossible because $\bar f+f_0+f^*$ is preserved
along every parcel trajectory and its level sets  at $x=0$
(that is, the level sets of $\bar f+f_0$ at $x=0$) 
cannot be sent by a translation to its level sets at $x=L$.
Although $v^*$ is not an irrotational flow, it is a Beltrami flow
because $H=0$. As the flux through the boundaries $x=0$ and $x=L$
does not vanish,
the proportionality factor between the velocity and the vorticity
cannot be constant (using also the periodicity in the $y$ and $z$ directions).
Beltrami flows have been considered in many papers, for example
in \cite{EnPe:2015} (Beltrami flows with constant proportionality factors)
and \cite{KaNeWa:2000} 
(with non-constant proportionality factors).
\end{ex}

The representation $v=\nabla f\times \nabla g$ can be seen as a generalization 
of the stream function representation $v=\nabla^\perp \psi$ for planar divergence-free stationary flows, in which the stream function $\psi$ is replaced by a pair of functions $f$ and $g$ (note that $f$ and $g$ are constant on stream lines). This representation  always holds locally near regular points of the velocity field (see, e.g., \cite{Barbarosie}). 
For the reader's convenience,
we give in the Appendix a self-contained proof when $v_1$ is non-vanishing
 that the representation holds globally in $D$  with additional $(P_1,P_2)$-periodicity with respect to $y$ and $z$ for $\nabla f$ and $\nabla g$.

In this formulation, the Euler equation has a particularly helpful variational structure \cite{Ke} (see also \cite{Bu:2012}).
Namely, the pair of functions $(f,g)$ will be called admissible for the present purpose if
\begin{itemize}
\item
$f$ and $g$ are  of class $C^2(\overline D)$,
\item
$\nabla f$ and $\nabla g$ are $P_1$-periodic in $y$ and $P_2$-periodic in $z$,
\item
$\displaystyle~
(f(x,y,z),g(x,y,z))=(\widetilde f_0(x,y,z),\widetilde g_0(x,y,z)),~\text{for all } (x,y,z)
\in \{0,L\}\times \RR^2,$
\end{itemize}
where $\widetilde f_0$ and $\widetilde g_0$ are two fixed functions
of class $C^2(\overline D)$
such that $\nabla \widetilde f_0$ and $\nabla \widetilde g_0$ 
are $P_1$-periodic in $y$ and $P_2$-periodic in $z$.
Under these conditions,
$v=\nabla f\times \nabla g$ is divergence free and the first component
$$v_1=(\nabla f\times\nabla g)\innpr (1,0,0)
=\partial_y f\, \partial_z g-\partial_yg\, \partial_z f
=\partial_y \widetilde f_0\,\partial_z \widetilde g_0-\partial_y\widetilde g_0\,
\partial_z \widetilde f_0$$
of $v$ is prescribed
on $\{0,L\}\times \RR^2$. 
In order to get a better insight into the set of admissible 
$(f,g)$,
note that $f(x,y,z)-a_1y-a_2z$ and
$g(x,y,z)-a_3y-a_4z$  are 
$P_1$-periodic in $y$ and $P_2$-periodic in $z$ for some constants
$a_1,a_2,a_3,a_4\in \RR$. The boundary condition ensures that
$a_1,a_2,a_3,a_4\in \RR$ do not depend on the particular admissible 
pair of functions
$(f,g)$.

We also assume that
the function $H\colon \RR^2\rightarrow \RR$ is of class $C^2$ and
that $\partial_f H$ and $\partial_g H$  composed with every admissible
pair  $(f,g)$ are $(P_1,P_2)$-periodic in $y$ and $z$. The latter is equivalent to 
requiring that $\nabla_{(f,g)} H$ is periodic with respect to the lattice 
generated by $P_1(a_1, a_3)$ and $P_2(a_2, a_4)$.

Let $(\widetilde f,\widetilde g)$ be admissible
and assume that $(\widetilde f,\widetilde g)$ is a critical point of
the integral functional
\begin{equation}
\label{eq: integral functional}
\int_{\sP}\Big\{ \frac12|\nabla f\times \nabla g|^2+H(f,g)\Big\}\,dx\,dy\,dz.
\end{equation}
defined on the set of admissible pairs $(f,g)$.
Let us check that $\widetilde v \coloneqq \nabla \widetilde f\times \nabla \widetilde g$
is a solution to the Euler equation with 
$\widetilde p=-\frac 1 2|\widetilde v|^2
+H(\widetilde f,\widetilde g)$.
We consider admissible variations $(f_s,g_s)$,
that is, maps $(s,x,y,z)\rightarrow (f_s(x,y,z),g_s(x,y,z))$
of class $C^2([-1,1]\times  \overline D)$
such that $(f_0,g_0)=(\widetilde f,\widetilde g)$,
$(f_1,g_1)$ is admissible and 
$$(f_s,g_s)=\Big((1-s)f_0+sf_1\,,\,(1-s)g_0+sg_1\Big)
~~\text{ for all }~~s\in(-1,1).$$
The meaning of critical point is that the integral functional
at $(f_s,g_s)$ as a function of $s$ has a vanishing derivative at $s=0$,
for every admissible variation $(f_s,g_s)$.
If in addition we assume that $(f_1-f_0,g_1-g_0)$ is compactly supported in $\sP$,
we get the Euler-Lagrange equation
\begin{equation}\label{eq: deux}
\left(\begin{array}{c}
-\Div(\nabla \widetilde g \times (\nabla \widetilde f\times \nabla \widetilde g))+
\partial_f H(\widetilde f,\widetilde g)
\\
-\Div((\nabla \widetilde f\times \nabla \widetilde g)\times \nabla \tilde f))
+\partial_gH(\tilde f,\tilde g)
\end{array}\right)=0.
\end{equation}
Because of the periodicity assumption on 
$\nabla\widetilde f$ and $\nabla\widetilde g$,
more general admissible variations $(f_s,g_s)$  do
not provide additional knowledge and,
thanks to the periodicity condition on
$\partial_fH(\widetilde f,\widetilde g)$  and
$\partial_gH(\widetilde f,\widetilde g)$, \eqref{eq: deux} holds true on all of 
$D$.
Equation \eqref{eq: deux} can also be written
\begin{equation}
\label{eq: deux bis} 
\nabla \tilde g \innpr \rot \tilde  v+\partial_fH(\tilde f,\tilde g)=0
\hbox{ and }
- \rot \tilde v \innpr \nabla \tilde f+\partial_g H(\tilde f,\tilde g)=0,
\hbox{ with }\tilde v=
\nabla \tilde f \times \nabla \tilde g.
\end{equation}
It then follows that
\begin{equation}
\label{eq: invariance of H}
\begin{aligned}
\tilde v \times \rot\tilde v
&=(\nabla \tilde f\times\nabla \tilde g)\times \rot\tilde v=(\nabla \tilde f
\innpr \rot\tilde v)\nabla \tilde g-
(\nabla \tilde g\innpr \rot\tilde v)\nabla \tilde f
\\
&=\partial_f H(\tilde f,\tilde g)\nabla \tilde f
+\partial_g H(\tilde f,\tilde g)\nabla \tilde g
=\nabla_{(x,y,z)}H(\tilde f,\tilde g).
\end{aligned}
\end{equation}
The identity (see e.g.\ p.\ 151 in \cite{Se:59})
$$\nabla (\frac 1 2 |\tilde v|^2)
=\tilde v \times \rot\tilde v+( \tilde v \innpr\nabla)\tilde  v$$
gives
$$( \tilde v\innpr \nabla)\tilde v 
-\nabla (\frac 1 2 |\tilde v|^2)+\nabla_{(x,y,z)} H(\tilde f,\tilde g)=0,$$
which is equivalent to the classical Euler equation for
inviscid, incompressible and time-independent flows
$$(\tilde v\innpr \nabla)\tilde v +\nabla \tilde p=0 \hbox{ with }
\tilde p=-\frac 1 2|\tilde v|^2 +H(\tilde f,\tilde g).$$
$H(\tilde f,\tilde g)$ can be seen as the Bernoulli function,
which is preserved by the flow since $\nabla_{(x,y,z)}(H(\tilde f,\tilde g))\innpr \tilde v=0$ by \eqref{eq: invariance of H}.

The aim of the paper is to develop an existence theory
in a small neighborhood of $(\bar f,\bar g)\in C^\infty(\overline D)$ when 
\begin{itemize}
\item $\nabla \bar f$ and 
$\nabla \bar g$ are constant, and
\item the first component of $\bar v
=\nabla \bar f\times\nabla \bar g$ does not vanish. 
\end{itemize}

If we perturb \eqref{eq: deux} into the equation
\begin{equation*}
\left(\begin{array}{c}
-\epsilon (\partial^2_y\widetilde f+\partial^2_z\widetilde f)
-\Div(\nabla \widetilde g \times (\nabla \widetilde f\times \nabla \widetilde g))+
\partial_f H(\widetilde f,\widetilde g)
\\
-\epsilon (\partial^2_y\widetilde g+\partial^2_z\widetilde g)
-\Div((\nabla \widetilde f\times \nabla \widetilde g)\times \nabla \tilde f))
+\partial_gH(\tilde f,\tilde g)
\end{array}\right)=0
\end{equation*}
and then linearize  this perturbed equation,
the obtained linear problem
is coercive \cite{KoNi}, provided that $\epsilon>0$.
The linearization of \eqref{eq: deux} can thus be described as ``degenerate'',
the $x$ direction being however non-degenerate
\cite{KoNi}. 
In Section \ref{sec: linearization}, we analyze the linear operator 
obtained from the linearization of \eqref{eq: deux} and its
invertibility, following the classical work by Kohn and Nirenberg
\cite{KoNi} for non-coercive  boundary value problems. 
The analysis of the linearized problem relies on the particular structure of the integral
functional \eqref{eq: integral functional}. The main point is that its
quadratic part is positive definite (see Proposition \ref{prop: first estimate}  for a precise statement).
The local uniqueness result is obtained as a corollary.

The Nash-Moser iteration method \cite{Mo,Ze}
 has been applied
to non-coercive problems in previous works, like \cite{Ki,Han-Hong}.
The approach we shall follow is 
the one described in Section 6 of \cite{Han-Hong}
for the embedding problem of Riemannian manifolds with non-negative 
Gauss curvature. The details are given in Section \ref{sec: Nash-Moser}. For simplicity, we have restricted ourselves
as in \cite{Han-Hong} to periodicity conditions with respect to $(y,z)$. 
A key ingredient are tame estimates for the inverse of the linearization, which are obtained in Section \ref{sec: linearization} using suitable commutator estimates.

In \cite{Alber}, Alber deals with a closely related setting.
The steady Euler equation is considered in a bounded, simply connected, 
smooth domain $\Omega\subset \RR^3$. There are three boundary conditions:
1) the  flux  through $\partial \Omega$ is given by  a function 
$f\colon\partial\Omega\rightarrow \RR$, 2) a condition on the vorticity
flux through the entrance  set $\{(x,y,z)\in \RR^3:f(x,y,z)<0\}\coloneqq \partial\Omega_-\,$ and 3) a condition on the Bernoulli function
on $\partial\Omega_-$.
Under precise assumptions, existence and uniqueness are obtained 
near a solution $v_0$ with small vorticity when the boundary conditions 2)
and 3) are slightly modified.
In the present paper, boundary condition 2) is,  roughly speaking,
replaced by a condition
on the Bernoulli function on the exit set. 
These more symmetric boundary conditions might be a first step to considering flows which are periodic in $x$, which is a natural geometry in the study of water waves. Our approach also has the benefit of using a variational structure.

Note that the stationary Euler equation also appears as a model in ideal magnetohydrodynamics, with $v$ replaced by  
the magnetic field $B$, the vorticity $\rot v$ replaced by the current density $J$ (up to a constant multiple) and the Bernoulli function $H$ replaced by the negative of the fluid pressure $p$. Grad \& Rubin \cite{GrRu} derived a variational principle for this problem which is rather close to the one considered here (see e.g.\ Theorem 1 in \cite{GrRu}), although they did not use it to construct solutions. 
Moreover the above example is related to their Theorems 3 and 5 and to a remark that follows their Theorem 5.
A recent work that relies on this variational principle for Euler flows is
\cite{Sl:2015};  it is formulated in a more general geometric framework.
An iterative method, not of Nash-Moser type, is developed in
\cite{KaNeWa:2000} to get Beltrami flows with non-constant proportionality
factors. The boundary conditions there have the same flavor as the ones 
in \cite{Alber}.
Writing a divergence-free  velocity field $v$ in the form
$v=\nabla f\times \nabla g$ may also be useful for irrotational flows,
as it could lead to helpful changes of variables; see \cite{Pl:1980}.

\section{Linearization}
\label{sec: linearization}

The variational structure of \eqref{eq: deux} allows one to
study its linearization with the help of
the quadratic part of the integral functional 
\eqref{eq: integral functional}
around an admissible pair $(f,g)$.  
From now on we shall call a pair $(f,g)$ admissible if
\begin{description}
\item[(Ad1)]
$f$ and $g$ are  of class $C^3(\overline D)$,
\item[(Ad2)]
$\nabla f$ and $\nabla g$ are $(P_1,P_2)$-periodic in $y$ and $z$.
\end{description}
The quadratic part is given by
\begin{multline*}
(F,G)\mapsto  \int_{\sP}\Big\{ \frac12
|\nabla F\times \nabla g+\nabla f\times \nabla G|^2+\left( \nabla f\times \nabla g\right)\innpr\left(\nabla F\times \nabla G\right)\\
+\frac12 \big(\partial_f^2 H(f,g)F^2+2\partial_f \partial_g H(f,g) FG+ \partial_g^2 H(f,g) G^2\big)\Big\}\,dx\,dy\,dz,
\end{multline*}
where $(F,G)$ is assumed admissible in the sense that
\begin{description}
\item[(Ad'1)]
$F$ and $G$ are  in the Sobolev space $H^1_{loc}(D)$.
\item[(Ad'2)]
$F$ and $G$ are $(P_1,P_2)$-periodic in $y$ and $z$,
\item[(Ad'3)]
$(F,G)=0$ on $\partial D$ in the sense of traces.
\end{description}
Condition (Ad'3) is introduced because
we shall assume later that the restriction of $(f,g)$ to $\partial D$
is a priori given.

Given an admissible pair $(f,g)$, we shall call $H$ admissible if
\begin{description}
\item[(Ad'')] 
$H\in C^2(\RR^2)$ and $H''(f,g)$ is 
$(P_1,P_2)$-periodic in $y$ and $z$.
\end{description}
In this section we will mostly think of $H''(f,g)$ as a given function of $(x,y,z)$ rather than a composition.

The quadratic part can be written $\frac12 B_{(f,g)}((F,G),(F,G))$,
where $B_{(f,g)}$ is the symmetric bilinear form
\begin{align*}
&
B_{(f,g)}((F,G),(\delta F,\delta G))
\\&=\int_{\sP}\Big\{ 
\left( \nabla F\times \nabla g+\nabla f\times \nabla G\right)\innpr\left(
\nabla \delta F\times \nabla g+\nabla f\times \nabla \delta G\right)
\\&\qquad\qquad+\left( \nabla f\times \nabla g \right)\innpr \left(\nabla F\times \nabla \delta G\right)
+\left( \nabla f\times \nabla g\right)\innpr\left(\nabla \delta F\times \nabla G\right)
\\&\qquad\qquad+
\partial_f^2 H(f,g)F\delta F+\partial_f \partial_g H(f,g) (F\delta G+G\delta F) + \partial_g^2 H(f,g) G \delta G
\Big\}\,dx\,dy\,dz.
\end{align*}

This section contains two kinds of results: 
firstly, we bound from below the quadratic part 
and, secondly, we study the regularity of solutions to the linearization of
problem \eqref{eq: deux}  at $(f,g)$.
A preliminary observation is that the quadratic part is not coercive at $(f,g)$ in the sense that there is no $\alpha>0$ such that, for all admissible $(F,G)$,
$$\frac 1 2 B_{( f, g)}((F,G),(F,G))\geq 
\int_{\sP}\Big\{\alpha(|\nabla F|^2 + |\nabla G|^2)
-\alpha^{-1}(F^2+G^2) \Big\}\,dx\,dy\,dz.
$$
For example, taking $G=0$, the quadratic part becomes
$$
F\mapsto  \int_{\sP}\Big( \frac12
|\nabla F\times \nabla g|^2
+\frac12 \partial_f^2 H(f,g)F^2\Big)\,dx\,dy\,dz.
$$
In the particular case
$f(x,y,z)=y$, $g(x,y,z)=z$, $H=0$ and $P_1=P_2=1$, the integral reduces to
\[
\frac12\int_{\sP} \Big(F_x^2 +F_y^2\Big) \, dx\, dy \, dz.
\]
Choosing $F_n$ of the form
$$F_n(x,y,z)=  \phi(x) \cos(2\pi n z), $$
where $\phi\in C^\infty(\mathbb{R},[0,1])$  is compactly supported in $(0,1)$ and takes the value $1$ on $(1/4,3/4)$, we find that the quadratic part and
$\|(F_n,G)\|_{L^2(\sP)}$ 
have positive constant values along the sequence $\{ (F_n,G) \}_{n\geq 1}$. 
However, $\|(\nabla F_n, \nabla G)\|_{L^2(\sP)}\to \infty$ and thus $\alpha$ as above cannot exist.
For a general pair $(f,g)$, we instead fix $(x_0,y_0,z_0)\in \sP$ such that $\nabla g(x_0, y_0, z_0)\ne 0$
and consider $F_n$ which is $(P_1,P_2)$-periodic in $(y,z)$ and when 
restricted to $\sP$ is given by 
$$F_n(x,y,z)= \phi(x,y,z) \cos\Big(n g(x,y,z)\Big), $$
where $\phi \in C^\infty(\overline{\sP}, [0,1])$ is compactly supported in $\sP$, 
with $\phi(x_0,y_0,z_0)=1$.
By choosing $n$ large enough, 
one again obtains that $\alpha$ cannot exist. 
In fact, we have made the stronger observation  that,
for all $\alpha>0$, there exists a  sequence $\{(F_n,G_n)\}$ of admissible pairs
such that
$$\frac 1 2 B_{( f, g)}((F_n,G_n),(F_n,G_n))
+\alpha^{-1}\int_{\sP}(F_n^2+G_n^2) \,dx\,dy\,dz
$$
remains bounded, but $\{(F_n,G_n)\}$ does not have any subsequence converging in $L^2(\sP)$.
This has implications for the regularity of the solutions to the linearized problem, as described below.

Nevertheless,
in Theorem \ref{thm: quadratic part}, we bound from below the quadratic part
in a rougher way.
The term
$\int_{\sP} \frac12|\nabla F\times \nabla g+\nabla f\times \nabla G|^2\,dx\,dy\,dz$ 
turns out to be rather nice, as shown in the first part
of the proof, because it is bounded from below by 
$\int_{\sP}\left\{(v\innpr \nabla F)^2+(v\innpr \nabla G)^2\right\}\,dx\,dy\,dz$ (under the simplifying assumption  \eqref{eq: for simplicity},
otherwise there is an additional factor). 
With the help of  a Poincar\'e inequality and thanks to the Dirichlet boundary condition at $x=0$ and $x=L$, 
$\int_{\sP}\left\{(v\innpr \nabla F)^2+(v\innpr \nabla G)^2\right\}\,dx\,dy\,dz$ can in turn be bounded from below by a positive constant times $\|(F,G)\|^2_{L^2(\sP)}$.
In the second and third parts of the proof of
Theorem \ref{thm: quadratic part},
we bound from below the second term of the quadratic part, that is,
$\int_{\sP} \left( \nabla f\times \nabla g\right)\innpr\left(\nabla F\times \nabla G\right)\,dx\,dy\,dz$: 
it cannot become too negative with respect to
$\int_{\sP} \frac12|\nabla F\times \nabla g+\nabla f\times \nabla G|^2
\,dx\,dy\,dz$. 
In these estimates, it is assumed that 
$(\nabla f,\nabla g)$ is in some small neighborhood of $(\nabla \bar f,\nabla \bar g)$ in  $C^2(\overline \sP)$.
To get a better feeling for the term
$\int_{\sP} \left( \nabla f\times \nabla g\right)\innpr\left(\nabla F\times \nabla G\right)\,dx\,dy\,dz$, observe that it vanishes when $v$ is irrotational  
because  (see the beginning of the second step)
$$ \int_{\sP}\left(\nabla f\times \nabla g\right)\innpr\left(\nabla F\times \nabla G\right) \,dx\,dy\,dz
=\frac12\int_{\sP}\rot v\innpr\left(F\nabla G-G\nabla F\right)\,dx\,dy\,dz.$$
As we allow $v$ to be slightly rotational, this term needs careful
estimates.

As a consequence of 
Theorem \ref{thm: quadratic part}, the integral functional is
strictly convex in a neighborhood of $(\bar f,\bar g)$, which
implies local uniqueness
of a solution to \eqref{eq: deux} (but not existence at this stage); 
see Theorem \ref{thm: uniqueness}.

With the aim to apply the technique of elliptic regularization \cite{KoNi}, 
we consider for $\epsilon\in[0,1]$ the regularized quadratic part
\begin{align*}
(F,G)\mapsto &\int_{\sP}\Big\{ \frac12
|\nabla F\times \nabla g+\nabla f\times \nabla G|^2
+\left( \nabla f\times \nabla g\right)\innpr\left(\nabla F\times \nabla G\right) 
\\
&
+\frac{\epsilon}{2}(|\nabla F|^2+|\nabla G|^2)
+\frac12 \big(\partial_f^2 H(f,g)F^2+2\partial_f \partial_g H(f,g) FG+ \partial_g^2 H(f,g) G^2\big)\Big\}\,dx\,dy\,dz
\\&:=\frac 1 2 B_{(f,g)}^\epsilon((F,G),(F,G)).
\end{align*}
All the obtained estimates are
uniform in $\epsilon \in [0,1]$, but, in addition, the problem becomes
elliptic for $\epsilon \in (0,1]$.

For every admissible $(f,g)\in C^3(\overline D)$, we introduce
the following system for
$(\mu,\nu)\in L^2_{loc}(D)$ that is $(P_1,P_2)$-periodic in $y$ and $z$, and for
$(F,G)\in H^2_{loc}(D)$
admissible in the sense of (Ad'1)--(Ad'3):
$$
\begin{array}{l}
\displaystyle
\mu=-\Div\Big(\nabla g\times(\nabla F\times\nabla g
+\nabla f\times \nabla G)
+\nabla G\times(\nabla f\times \nabla g)\Big)
\\~~~~~
-\epsilon \Delta F
+\partial_f^2 H(f,g)F+\partial_f \partial_g H(f,g) G,
\\
\displaystyle
\nu=-\Div\Big((\nabla F\times\nabla g
+\nabla f\times \nabla G)\times \nabla f
+(\nabla f\times \nabla g)\times \nabla F\Big)
\\~~~~~
-\epsilon \Delta G
+\partial_f \partial_g H(f,g) F
+\partial_g^2 H(f,g)G.
\end{array}
$$
The right-hand side is the linear operator related
to the regularized quadratic part.
This system also makes sense in a weak form if, 
instead of $(F,G)\in H^2_{loc}(D)$, we ask that $(F,G)\in H^1_{loc}(D)$.
Given $(\mu,\nu)$ in any higher-order Sobolev space, the main issue of
Section \ref{sec: linearization} is to study the regularity of
a solution $(F,G)$, aiming at estimates of the Sobolev norms,
uniformly in $\epsilon\in[0,1]$.
Such a  pair $(F,G)$ is easily proved to be unique and its existence for
$\epsilon\in (0,1]$ follows from the fact that the system is elliptic.
The same particular case as above gives more insight into this system. 
Setting $\mu=\nu=0$, $\epsilon=0$, $G=0$, $f(x,y,z)=y$, $g(x,y,z)=z$ and $P_1=P_2=1$,
we get
$$
\begin{array}{l}
\displaystyle
-\Div(\partial_1F,\partial_2F,0)
+\partial_f^2 H(f,g)F=0,
\\
\displaystyle
-\Div(0,-\partial_3 F, 2\partial_2 F)
+\partial_f \partial_g H(f,g) F=0.
\end{array}
$$
Keeping only the second order terms and forgetting the boundary and
periodicity conditions,  we see that $F(x,y,z)=\cos(z)$ is a solution
to both equations. Hence the regularity theory in \cite{AgmonDouglisNirenberg} cannot be used when
$\epsilon=0$, $f(x,y,z)=y$, $g(x,y,z)=z$ and $P_1=P_2=1$.

In Proposition
\ref{prop: solve second partial in x},
we explain how the general system   allows one
to express 
$\partial^2_{11}F$ and $\partial^2_{11}G$  with respect to
the other second-order partial derivatives of $F$ and $G$, and
lower-order terms, involving $\mu$ and $\nu$ too.
After iterative differentiations, this also yields expressions for higher-order
derivatives that contain at least two partial derivatives
with respect to $x$. In a more general setting, this is developed
in \cite{KoNi}.

For $i\in\{2,3\}$,
multiplying both sides of each equation of the system by 
$(-1)^r\partial_i^{2r} F$ and
$(-1)^r\partial_i^{2r} G$, respectively, 
summing the two equations and then integrating by parts
many times, $B_{(f,g)}(\partial_i^r F,\partial_i^r F)$ 
arises, with additional bilinear terms in $(F,G)$
that turn out to involve  at most $r$ partial
derivatives of $F$ and $G$ for each of the two components
of each bilinear term.
We can make some of  these additional terms small if $v$ is near $\bar v$ 
(here, the hypothesis that $\nabla \bar f$ and $\nabla \bar g$ are constant is used, 
see the remarks following Theorem \ref{thm: tame inverse}).
This crucial observation is developed in \cite{KoNi} in a more general
framework,
and is presented here in our specific setting in
Theorem \ref{thm: operator A}.
The quadratic part gives then control on
the $L^2(\sP)$-norms of $\partial_i^rF$  and $\partial_i^rG$,
but also on the $L^2(\sP)$-norms of $\partial_1\partial_i^rF$  and
$\partial_1\partial_i^rG$.
Hence the $L^2(\sP)$-norms of $\partial_i^rF$,  $\partial_i^rG$,
$\partial_1\partial_i^rF$  and
$\partial_1\partial_i^rG$ are controlled by the $L^2(\sP)$-norms
of $\partial_i^r\mu$ and  $\partial_i^r\nu$ and by
a small factor times the $H^r(\sP)$-norms of $F$ and $G$.
With all these tools, we get the estimate of Theorem
\ref{thm: general estimate} at the end of Section 
\ref{sec: linearization}, in which the norm of $(f,g)$
in some Sobolev space also appears, the order of which
is under sufficient control.
Although we follow ideas from \cite{KoNi} (see in particular Theorem 2'), explicit estimates
allow one to get explicit regularity results for the solutions
obtained by the Nash-Moser procedure. It may be expected
that these estimates could be improved and thus also the statements
on regularity, but we do not strive in the present work to be optimal.
The lack of compactness mentioned above prevents us from proving $C^\infty$ smoothness of the solution using the method behind  Theorem 2 in \cite{KoNi}.

Our first aim is to find conditions that ensure that $B_{(f,g)}$ 
is positive definite.
In \cite{Bu:2012}, a minimizer of a more general integral functional
could be found in some space of general flows, in a very similar
spirit as in Brenier's work \cite{Brenier}. Hence it could be  expected
that, under appropriate conditions, the quadratic part is 
non-negative at a solution of
\eqref{eq: deux}. In the proof of the following theorem, we also
rely on Poincar\'e's inequality to get the stronger result 
that the quadratic part
is positive definite for $(f,g)$ (not necessarily a solution to \eqref{eq: deux}) sufficiently close to $(\bar f, \bar g)$ and $H''$ sufficiently small (see Theorem \ref{thm: quadratic part}).
For simplicity, we shall assume in the following statement that
\begin{equation}
\label{eq: for simplicity}
|\nabla \bar f|^2+|\nabla \bar g|^2
+\sqrt{(|\nabla \bar f|^2+|\nabla \bar g|^2)^2-4|\bar v|^2}\leq 2,
\quad \bar v \coloneqq \nabla \bar f \times \nabla \bar g.
\end{equation}
As for (small) $\lambda>0$ equation \eqref{eq: deux} remains invariant under the transformation
$$(\widetilde f,\widetilde g)\rightarrow (\lambda\widetilde f,\lambda\widetilde g),~~H\rightarrow \lambda^4 H(\lambda^{-1}\cdot,\lambda^{-1}\cdot),$$
there is no loss of generality.

\begin{thm}
\label{thm: quadratic part}
Assume that $\nabla \bar f$ and $\nabla \bar g$ are constant,
that the first component of $\bar v$ does not vanish and that
\eqref{eq: for simplicity} holds true.
For admissible $(f,g)$ and $(F,G)$,
\begin{equation}
\label{eq: quadra larger}
\begin{aligned}
& B_{(f,g)}((F,G), (F,G))
\\&\geq
\int_{\sP}\Big\{ 
\frac 1 {16}(v\innpr \nabla F)^2+\frac 1{16}(v\innpr \nabla G)^2+(1-O(\|v'\|_{C(\overline \sP)}))\frac {\pi^2\min_{\overline \sP}  v_1^2} {16L^2}(F^2+G^2) 
\\ &\qquad \qquad  \qquad \qquad \qquad
+\partial_f^2 H(f,g)F^2+2\partial_f \partial_g H(f,g)FG+ \partial_g^2 H(f,g) G^2
\Big\}\,dx\,dy\,dz
\end{aligned}
\end{equation}
holds if $(\nabla f,\nabla g)$ is in some small neighborhood of
$(\nabla \bar f,\nabla \bar g)$ in  $C^2(\overline \sP)$
(independent of $H$ admissible).
\end{thm}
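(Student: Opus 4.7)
The plan is to establish \eqref{eq: quadra larger} in three parts, matching the three pieces of $B_{(f,g)}$: an algebraic pointwise lower bound on the positive definite term $\int_\sP|\nabla F\times \nabla g+\nabla f\times \nabla G|^2$, an integration-by-parts estimate for the indefinite cross term $\int_\sP v\cdot(\nabla F\times\nabla G)$, and a one-dimensional Poincar\'e inequality along the streamlines of $v$. The $H''$-terms on both sides of \eqref{eq: quadra larger} coincide and require no treatment.

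For the first part, set $w\coloneqq \nabla F\times \nabla g+\nabla f\times \nabla G$. Scalar-triple-product manipulations give $w\cdot\nabla f=-v\cdot\nabla F$ and $w\cdot\nabla g=-v\cdot\nabla G$; since $v=\nabla f\times\nabla g$ is orthogonal to both $\nabla f$ and $\nabla g$, projecting $w$ onto $\operatorname{span}(\nabla f,\nabla g)$ and inverting the Gram matrix of $(\nabla f,\nabla g)$ (whose determinant equals $|v|^2$ by Lagrange's identity) yields
\[
|w|^2\;\ge\; \lambda_{+}^{-1}\bigl[(v\cdot\nabla F)^2+(v\cdot\nabla G)^2\bigr],
\]
where $\lambda_{+}$ is the larger eigenvalue of this Gram matrix. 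Assumption \eqref{eq: for simplicity} is precisely $\lambda_{+}(\bar f,\bar g)\le 1$, so by continuity $\lambda_{+}(f,g)=1+O(\|v'\|_{C^0})$ uniformly on the $C^2$-neighborhood, giving $\int_\sP|w|^2\ge (1-O(\|v'\|_{C^0}))\int_\sP[(v\cdot\nabla F)^2+(v\cdot\nabla G)^2]$.

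For the cross term, the identity $\nabla F\times\nabla G=\nabla\times(F\nabla G)$ and integration by parts---the boundary contributions vanish since $F=0$ on $\{x=0,L\}$ and $F,G,v$ are periodic in $y,z$---give
\[
\int_\sP v\cdot(\nabla F\times \nabla G)=\tfrac12\int_\sP \rot v\cdot(F\nabla G-G\nabla F),
\]
and $\|\rot v\|_{C^0}=O(\|v'\|_{C^0})$ because $\rot\bar v=0$. To bound this without the unavailable $L^2$-norms of $\nabla F,\nabla G$, I decompose $\rot v=\beta v+(\rot v)_{\perp}$ with $(\rot v)_{\perp}\in\operatorname{span}(\nabla f,\nabla g)$: the $\beta v$-contribution reduces to $\int\beta[F(v\cdot\nabla G)-G(v\cdot\nabla F)]$, bounded directly by Young's inequality; for the perpendicular contribution one integrates by parts in the $\nabla f$ and $\nabla g$ directions, and the Schwarz-symmetry of $\partial^2 F,\partial^2 G$ produces a cancellation of the principal terms, leaving only expressions of order $O(\|v'\|_{C^0})\int[F^2+G^2+(v\cdot\nabla F)^2+(v\cdot\nabla G)^2]$.

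The Poincar\'e step uses that $v_1$ does not vanish on $\overline\sP$: the flow of $v/v_1$ straightens the streamlines of $v$ into the $x$-axis, and since $\nabla\cdot(v/v_1)=-(v\cdot\nabla v_1)/v_1^2=O(\|v'\|_{C^0})$, the Jacobian of this flow differs from $1$ by $O(\|v'\|_{C^0})$. One-dimensional Poincar\'e along each streamline (with Dirichlet conditions at $x=0,L$) then gives
\[
\int_\sP(v\cdot\nabla F)^2\;\ge\;\bigl(1-O(\|v'\|_{C^0})\bigr)\frac{\pi^2\min_{\overline\sP}v_1^2}{L^2}\int_\sP F^2,
\]
and likewise for $G$. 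Retaining the portion $\tfrac{1}{16}[(v\cdot\nabla F)^2+(v\cdot\nabla G)^2]$ from the first step, applying Poincar\'e to the remainder, and absorbing the $O(\|v'\|_{C^0})$ cross-term contribution from the second step into the slack by shrinking the $C^2$-neighborhood of $(\nabla\bar f,\nabla\bar g)$, yields \eqref{eq: quadra larger}. The main obstacle is the cross-term estimate: $v\cdot\nabla$ controls only the streamline derivative, so a naive Cauchy--Schwarz would demand the unavailable $\|\nabla F\|_{L^2}$ and $\|\nabla G\|_{L^2}$, and one must exploit the Lagrange-identity reformulation $v\cdot(\nabla F\times\nabla G)=(\nabla f\cdot\nabla F)(\nabla g\cdot\nabla G)-(\nabla f\cdot\nabla G)(\nabla g\cdot\nabla F)$ together with antisymmetric integration by parts to extract the necessary cancellation.
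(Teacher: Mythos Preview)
Your overall architecture matches the paper's: the Gram-matrix lower bound on $|w|^2$ with \eqref{eq: for simplicity}, the streamline-straightening Poincar\'e inequality, and the reduction of the cross term to $\tfrac12\int_\sP\rot v\cdot(F\nabla G-G\nabla F)$ via the div--curl identity are all exactly what the paper does. The decomposition $\rot v=\beta v+(\rot v)_\perp$ with $(\rot v)_\perp\in\operatorname{span}(\nabla f,\nabla g)$ is also the paper's decomposition, written in different notation (the paper uses the basis $v\times\nabla f,\ \nabla g\times v$ of that plane).

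The gap is in your treatment of the perpendicular part $(\rot v)_\perp$. You claim that ``Schwarz symmetry of $\partial^2 F,\partial^2 G$ produces a cancellation of the principal terms, leaving only expressions of order $O(\|v'\|_{C^0})\int[F^2+G^2+(v\cdot\nabla F)^2+(v\cdot\nabla G)^2]$''. This is too optimistic on two counts. First, the residual cannot be bounded by the streamline derivatives alone: the paper must absorb a \emph{fixed} fraction (ultimately $\tfrac38$) of the full $\int_\sP|w|^2$, not an $O(\|v'\|)$ multiple of it. The mechanism is a Young inequality applied to the auxiliary quantity $-\beta F\,v\cdot w$ (with constants $\tfrac18$ and $2$), after which the expansion of $v\cdot w$ and a further div--curl integration by parts manufactures exactly the problematic term $\int\beta G\,v\cdot(\nabla F\times\nabla f)$ on the correct side; your ``integrate by parts in the $\nabla f$ and $\nabla g$ directions'' does not produce this, since those are transverse directions and the derivatives $\nabla f\cdot\nabla F$, $\nabla g\cdot\nabla F$ are uncontrolled. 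Second, the lower-order remainder contains $\nabla\beta$ and $\nabla\gamma$ (hence second derivatives of $v$), so the smallness used to close is the $C^2$-smallness of $(\nabla f,\nabla g)$ from the hypothesis, not merely $O(\|v'\|_{C^0})$. You should replace the hand-waving with the explicit Young/div--curl manoeuvre and accept the $\tfrac18|w|^2$ cost per term.
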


\begin{notation}
The notation $u=O(v)$ means that
the norm (or absolute value) of $u$ is less than a constant times $v$ in the relevant domain. 
We also use the notation $u \lesssim v$ to indicate that there exists a constant $C>0$ (independent of $u$ and $v$) such that 
$u\leq Cv$.
\end{notation}

\begin{rem}
It is not essential that $\nabla \bar f$ and $\nabla \bar g$ are constant for this result to hold. The result would still remain true if we instead were  to require that $\rot \bar v=0$  (the other hypotheses remaining the same) and replace 
the coefficient 
$\displaystyle 1-O(\|v'\|_{C(\overline \sP)})$ in \eqref{eq: quadra larger}
by $\displaystyle \operatorname{exp}(-4L\|(v/v_1)'\|_{C(\overline \sP)})$.
This might be useful for considering perturbations of other irrotational flows. See however the remarks following Theorem \ref{thm: tame inverse}.
\end{rem}

\begin{proof}
Under the hypotheses of the theorem, we can assume that the first component of the velocity field
$v=\nabla f\times \nabla g$ never vanishes (like the one of $\bar v$).
We study the various terms separately.

\noindent
{\bf First step.} Let us first show that
\begin{align*}
 &\int_{\sP} 
|\nabla F\times \nabla g+\nabla f\times \nabla G|^2
\,dx\,dy\,dz\\
&\qquad\geq
\int_{\sP}\left\{(v\innpr \nabla F)^2+(v\innpr \nabla G)^2\right\}\,dx\,dy\,dz
\\
&\qquad\geq 
(1-O(\|v'\|_{C(\overline \sP)}))
\frac {\pi^2\min_{\overline \sP}  v_1^2} {L^2}
 \int_{\sP}
(F^2+G^2)\,dx\, dy\,dz
\end{align*}
if $(\nabla f,\nabla g)$ is near enough to $(\nabla \bar f,\nabla \bar g)$
in $C^1(\overline \sP)$.

To this end,  write
$$
\nabla F\times \nabla g+\nabla f\times \nabla G=a\nabla f+b\nabla g+
c \nabla f\times \nabla g.
$$
By taking the scalar product of both sides with 
$\nabla f$, $\nabla g$ and $\nabla f\times \nabla g$
successively, we get
$$\left\{
\begin{array}{l}
(\nabla g\times \nabla f)\innpr \nabla F=a|\nabla f|^2+b \nabla f\innpr\nabla g\\
(\nabla g\times \nabla f)\innpr \nabla G=a \nabla f\innpr\nabla g+b|\nabla g|^2\\
(\nabla g\times(\nabla f\times \nabla g))\innpr \nabla F
+((\nabla f\times \nabla g)\times \nabla f)\innpr \nabla G
=c|\nabla f\times \nabla g|^2
\end{array}\right.$$
and
\begin{align*}
a&=\frac{-|\nabla g|^2(v\innpr \nabla F)+(\nabla f\innpr\nabla g)(v\innpr \nabla G)
}{|\nabla f|^2|\nabla g|^2-\left(\nabla f\innpr\nabla g\right)^2}
\\
&=\frac{-|\nabla g|^2(v\innpr \nabla F)+\left(\nabla f\innpr\nabla g\right)(v\innpr \nabla G)
}{|v|^2}\,,
\\
b&=\frac{-|\nabla f|^2(v\innpr \nabla G)+\left(\nabla f\innpr\nabla g\right)(v\innpr \nabla F)
}{|v|^2}\,,\\
c&=\frac{(v\times \nabla f)\innpr \nabla G+(\nabla g\times v)\innpr \nabla F
}{|v|^2}~.
\end{align*}
Hence
\begin{eqnarray*}
&&\int_{\sP}
 |\nabla F\times \nabla g+\nabla f\times \nabla G|^2
\,dx\,dy\,dz
\\
&&\geq 
 \int_{\sP}|a\nabla f+b\nabla g|^2\,dx\,dy\,dz
\\
&&=
 \int_{\sP}
(a~~b)\left(\begin{array}{c c}|\nabla f|^2&\nabla f\innpr\nabla g
\\\left(\nabla f\innpr\nabla g\right)&|\nabla g|^2\end{array}\right)
\left(\begin{array}{c}a\\b\end{array}\right)
\,dx\,dy\,dz
\\
&&=
\int_{\sP}\frac{1}{|v|^4}
\Big(v\innpr\nabla F~~~v\innpr\nabla G\Big)
\left(\begin{array}{c c}-|\nabla g|^2&\nabla f\innpr\nabla g\\
\nabla f\innpr\nabla g&-|\nabla f|^2\end{array}\right)
\\&&\qquad\times
\left(\begin{array}{c c}|\nabla f|^2&\nabla f\innpr\nabla g
\\\nabla f\innpr\nabla g&|\nabla g|^2\end{array}\right)
\\&&\qquad
\times \left(\begin{array}{c c}-|\nabla g|^2&\nabla f\innpr\nabla g
\\\nabla f\innpr\nabla g&-|\nabla f|^2\end{array}\right)
\left(\begin{array}{c}v\innpr\nabla F\\v\innpr\nabla G\end{array}\right)
\,dx\,dy\,dz
\\
&&=
 \int_{\sP}\frac{1}{|v|^4}
\Big(v\innpr\nabla F~~~v\innpr\nabla G\Big)
\left(\begin{array}{c c}-|\nabla g|^2&\nabla f\innpr\nabla g\\
\nabla f\innpr\nabla g&-|\nabla f|^2\end{array}\right)
\\&&\qquad\times
\left(\begin{array}{c c}-|v|^2&0
\\0&-|v|^2\end{array}\right)
\left(\begin{array}{c}v\innpr\nabla F\\v\innpr\nabla G\end{array}\right)
\,dx\,dy\,dz\\
\\&&=
 \int_{\sP}\frac{1}{|v|^2}
\Big(v\innpr\nabla F~~~v\innpr\nabla G\Big)
\\&&\qquad \times
\left(\begin{array}{c c}|\nabla g|^2&-\nabla f\innpr\nabla g\\
-\nabla f\innpr\nabla g&|\nabla f|^2\end{array}\right)
\left(\begin{array}{c}v\innpr\nabla F\\v\innpr\nabla G\end{array}\right)
\,dx\,dy\,dz\\
\\&&\geq
\int_{\sP}
\frac{|\nabla f|^2+|\nabla g|^2-\sqrt{(|\nabla f|^2+|\nabla g|^2)^2-4|v|^2}}{2|v|^2} \left\{(v\innpr \nabla F)^2+(v\innpr \nabla G)^2\right\}\,dx\,dy\,dz
\end{eqnarray*}
because the eigenvalues of 
$$\left(\begin{array}{c c}|\nabla g|^2&- \nabla f\innpr\nabla g\\
- \nabla f\innpr\nabla g&|\nabla f|^2\end{array}\right)$$
are $\frac 1 2 \left(
|\nabla f|^2+|\nabla g|^2\pm \sqrt{(|\nabla f|^2+|\nabla g|^2)^2-4|v|^2}
\right)$.
By the simplifying assumption \eqref{eq: for simplicity},
$$\int_{\sP}
|\nabla F\times \nabla g+\nabla f\times \nabla G|^2
\,dx\,dy\,dz\geq
\int_{\sP}\left\{(v\innpr \nabla F)^2+(v\innpr \nabla G)^2\right\}\,dx\,dy\,dz$$
if $(\nabla f,\nabla g)$ is near enough to $(\nabla \bar f,\nabla \bar g)$ 
in $C(\overline{\sP})$.

To obtain the second inequality of the first step,
we now use Poincar\'e's inequality in one dimension
by relying on the fact that $F$ and $G$ vanish on $\{0,L\}\times(0,P_1)\times(0,P_2)$,
and then integrate with respect to the two remaining variables.
We use again that the first component of $\bar v$ does not vanish
and that $v$ is in some small neighborhood of $\bar v$,
so that the first component of $v$ does not vanish either.
Given $(\widetilde y,\widetilde z)\in\RR^2$, let 
$\Gamma_{(\widetilde y,\widetilde z)}\colon [0,L] \rightarrow \RR^2$ 
be the function of the variable $\widetilde x\in[0,L]$ satisfying
$$\Gamma_{(\widetilde y,\widetilde z)}'(\widetilde x)=
\frac 1 {v_1(
\widetilde x, \Gamma_{(\widetilde y,\widetilde z)}(\widetilde x)
)}(v_2(
\widetilde x, \Gamma_{(\widetilde y,\widetilde z)}(\widetilde x)
),v_3(
\widetilde x, \Gamma_{(\widetilde y,\widetilde z)}(\widetilde x)
))$$
with the initial condition
$\Gamma_{(\widetilde y,\widetilde z)}(0)=(\widetilde y,\widetilde z)$.
By Theorem 7.2 of Chapter 1 in 
\cite{CoLe} on the regularity of solutions of ODEs,
the map $(\widetilde x,\widetilde y,\widetilde z)\rightarrow 
\Gamma_{(\widetilde y,\widetilde z)}(\widetilde x)$ is of class 
$C^2(\overline P)$.

Moreover the Jacobian determinant of the map
$(\widetilde y,\widetilde z)\rightarrow 
\Gamma_{(\widetilde y,\widetilde z)}(s)$  is given by
$$\text{exp}\int_{0}^{s}\Div_{(y,z)} (v_2/v_1,v_3/v_1)|_{(\widetilde x,\Gamma_{(\widetilde y,\widetilde z)}(\widetilde x))}\,d\widetilde x.$$
Given $\widetilde x\in(0,L)$, we associate to
$(\widetilde x,\widetilde y,\widetilde z)$ the point
$$(x,y,z)=(\widetilde x, \Gamma_{(\widetilde y,\widetilde z)}
(\widetilde x)).$$
Observe that $x=\widetilde x$.
We denote  by $J(\widetilde x,\widetilde y,\widetilde z)$ the Jacobian
determinant  and  obtain
$$J(s,\widetilde y,\widetilde z)
=\text{exp}\int_{0}^{s}\Div_{(y,z)} (v_2/v_1,v_3/v_1)|_{(\widetilde x,\Gamma_{(\widetilde y,\widetilde z)}(\widetilde x))}\,d\widetilde x
=1+O(\|v'\|_{C(\overline \sP)})$$
uniformly in $(s,\widetilde y,\widetilde z)\in \overline \sP$ 
if $v$ is near enough to
$\bar v$ in $C^1(\overline \sP)$.

Setting
$$\widetilde F(\widetilde x,\widetilde y,\widetilde z)=F(x,y,z),
~~\widetilde G(\widetilde x,\widetilde y,\widetilde z)=G(x,y,z),
~~\widetilde v_1(\widetilde x,\widetilde y,\widetilde z)=v_1(x,y,z),$$
we get 
$$\partial_1\widetilde F(\widetilde x,\widetilde y,\widetilde z)
=\frac{d}{d\widetilde x}F
(\widetilde x, \Gamma_{(\widetilde y,\widetilde z)}(\widetilde x))
=
 \nabla F\innpr
\left(\begin{array}{c}1\\v_2/v_1\\v_3/v_1\end{array}\right)
\text{ at }
(\widetilde x, \Gamma_{(\widetilde y,\widetilde z)}(\widetilde x)),
$$
$$
\widetilde v_1  \partial_1 \widetilde F=v\innpr \nabla F  ,~~
\widetilde v_1 \partial_1 \widetilde G=v\innpr \nabla G $$
and
\begin{eqnarray*}
&&
\int_{\sP}
\left\{(v\innpr \nabla F)^2+(v\innpr \nabla G)^2\right\}
\,dx\,dy\,dz\\
\\&&=
 \int_{\sP}
\left\{( \widetilde v_1\partial_1 \widetilde F)^2
+(\widetilde v_1\partial_1 \widetilde G)^2\right\}
J(\widetilde x,\widetilde y,\widetilde z)
d\widetilde x\,d\widetilde y\,d\widetilde z\\
\\&&\geq 
\min_{\overline \sP}( \widetilde v_1^2J)
\int_{(0,P_1)\times(0,P_2)}\left\{\int_0^L
\left\{(\partial_1 \widetilde F)^2
+(\partial_1 \widetilde G)^2\right\} 
d\widetilde x\right\}d\widetilde y\,d\widetilde z\\
\\&&\geq 
\frac {\pi^2\min_{\overline \sP} \widetilde v_1^2J} {L^2} 
\int_{(0,P_1)\times(0,P_2)}\left\{\int_0^L
(\widetilde F^2+\widetilde G^2)d\widetilde x\right\}
d\widetilde y\,d\widetilde z
\\&&\geq 
\frac {\pi^2\min_{\overline \sP} \widetilde v_1^2J} {L^2
\max_{\overline \sP} J} 
 \int_{\sP}
(F^2+G^2)\,dx\, dy\,d z.
\\&&\geq 
(1-O(\|v'\|_{C(\overline \sP)}))
\frac {\pi^2\min_{\overline \sP}  v_1^2} {L^2}
 \int_{\sP}
(F^2+G^2)\,dx\, dy\,d z
\end{eqnarray*}
if $v$ is in some small neighborhood of $\bar v$ in 
$C^1(\overline \sP)$.

\noindent{\bf Second step.}
We now deal with the term
$\int_{\sP}
\left( \nabla f\times \nabla g\right)\innpr\left(\nabla F\times \nabla G\right)
\,dx\,dy\,dz$.
Write
$$\rot v=\alpha v+\beta v\times \nabla f+\gamma \nabla g \times v$$
with 
$$\alpha=\frac{\rot v\innpr v}{|v|^2},~ 
\beta=\frac{\rot v\innpr\nabla g }{|v|^2},~ 
\gamma=\frac{\rot v\innpr\nabla f }{|v|^2}.~ $$
We get
\begin{eqnarray*}
&& \int_{\sP}\left(\nabla f\times \nabla g\right)\innpr\left(\nabla F\times \nabla G\right) \,dx\,dy\,dz
\\&&=\frac12
\int_{\sP}
 v\innpr\rot(F\nabla G-G\nabla F)
\,dx\,dy\,dz\\
&&
=\frac12
\int_{\sP}
\rot v\innpr\left(F\nabla G-G\nabla F\right)
\,dx\,dy\,dz
\end{eqnarray*}
because
\begin{align*}
0&=\int_{\sP} \Div
\left(v\times \left(F\nabla G-G\nabla F\right)\right)\,dx\,dy\,dz
\\
&=
\int_{\sP} \left( \rot v\innpr\left(F\nabla G-G\nabla F\right) -  v\innpr \rot(F\nabla G-G\nabla F)
\right)\,dx\,dy\,dz. 
\end{align*}
Hence
\begin{eqnarray*}
&& \int_{\sP}\left( \nabla f\times \nabla g\right)\innpr\left(\nabla F\times \nabla G\right) \,dx\,dy\,dz
\\&&=
\frac12
\int_{\sP}\
\left( \alpha v+\beta v\times \nabla f+\gamma \nabla g \times v\right)\innpr \left(F\nabla G-G\nabla F\right)
\,dx\,dy\,dz
\\&&=\frac12
\int_{\sP}\Big\{ 
\alpha\left(  \nabla F\times \nabla g+\nabla f\times \nabla G\right)\innpr\left(  G\nabla f-F\nabla g\right)
\\&&\qquad\qquad
+ \left( \beta v\times \nabla f+\gamma \nabla g \times v\right)\innpr\left(F\nabla G-G\nabla F\right)
\Big\}\,dx\,dy\,dz
\\&&\geq
\int_{\sP}\Big\{ 
-\frac 1 8 |\nabla F\times \nabla g+\nabla f\times \nabla G|^2
-\alpha^2(G^2|\nabla f|^2+F^2|\nabla g|^2)
\\&&\qquad\qquad
+\frac12 \left( \beta v\times \nabla f+\gamma \nabla g \times v\right)\innpr\left(F\nabla G-G\nabla F\right)
\Big\}\,dx\,dy\,dz.
\end{eqnarray*}
The (absolute value of the) first term in this expression does not 
create problems because it can be controlled
by one eighth of the term studied in the first step.
Neither does the second term because it can also be controlled
by any fraction of the term studied in the first step 
(as the second term is quadratic in $(F,G)$  and $|\alpha|$
is as small as needed if $\rot v$ is near enough to $\rot \bar v=0$).
The aim of the next step is to deal with the last term.

\noindent{\bf Third step.}
The aim of this step it to get control of the term
\[
\frac12 \int_{\sP} \left( \beta v\times \nabla f+\gamma \nabla g \times v\right)\innpr\left( F\nabla G-G\nabla F\right) \,dx\,dy\,dz.
\]
First, using $\nabla(FG)=G\nabla F+F\nabla G$, we have
\begin{align*}
&\frac12 \int_{\sP} \left( \beta v\times \nabla f\right)\innpr \left(F\nabla G-G\nabla F\right) \,dx\,dy\,dz\\
&=
\frac12\int_{\sP} \left( \beta v\times \nabla f\right)\innpr\nabla(FG) \,dx\,dy\,dz-\int_{\sP} \left( \beta v\times \nabla f\right)\innpr\left(G\nabla F\right) \,dx\,dy\,dz\\
&= -\frac12 \int_{\sP}FG\left( \beta \rot v+\nabla \beta \times v\right)\innpr\nabla f \, \,dx\,dy\, dz
-\int_{\sP} \left( \beta v\times \nabla f\right)\innpr\left(G\nabla F\right) \,dx\,dy\,dz.
\end{align*}
Similarly, we can rewrite
\begin{align*}
&\frac12 \int_{\sP} \left( \gamma \nabla g\times v\right)\innpr\left(F\nabla G-G\nabla F\right) \,dx\,dy\,dz\\
&=-
\frac12\int_{\sP} \left( \gamma  \nabla g\times v\right)\innpr\nabla(FG) \,dx\,dy\,dz+\int_{\sP} \left( \gamma \nabla g\times v\right)\innpr\left(F\nabla G\right) \,dx\,dy\,dz\\
&=
-\frac12\int_{\sP} FG\left( \gamma \rot v+\nabla\gamma \times v\right)\innpr \nabla g \,dx\,dy\,dz+\int_{\sP} \left( \gamma \nabla g\times v\right)\innpr\left(F\nabla G\right) \,dx\,dy\,dz.
\end{align*}
As
$$| -\beta F v\innpr\left(\nabla F\times \nabla g+\nabla f\times \nabla G\right) |\leq
 2\beta^2F^2|v|^2
+\frac 1 8 |\nabla F\times \nabla g+\nabla f\times \nabla G|^2$$
and
\begin{equation}
\label{eq: deals with bdy term}
\begin{aligned}
0&= ~\int_{\sP} \Div\left(v\times 
\left(-\beta\frac{F^2}{2}
\nabla g+\beta FG\nabla f\right)\right)\,dx\,dy\,dz
\\&= ~\int_{\sP}  \rot v\innpr\left(
-\beta\frac{F^2}{2}\nabla g+\beta FG\nabla f\right) \,dx\,dy\,dz
-\int_{\sP} v\innpr 
\rot\left(-\beta\frac{F^2}{2}\nabla g+\beta FG\nabla f
\right) \,dx\,dy\,dz,
\end{aligned}
\end{equation}
we have
\begin{eqnarray*}
&& \int_{\sP} \frac 1 8 
|\nabla F\times \nabla g+\nabla f\times \nabla G|^2
\,dx\,dy\,dz
\\&&\geq \int_{\sP}\Big\{ 
 -\beta F v\innpr\left(\nabla F\times \nabla g+\nabla f\times \nabla G\right)
- 2\beta^2F^2|v|^2
\Big\}\,dx\,dy\,dz
\\&&=\int_{\sP}\Big\{ 
 v\innpr\Big(
\rot\left(-\beta\frac{F^2}{2}\nabla g+\beta F G \nabla f\right)
\\&&\qquad\qquad+\frac{F^2}{2}\nabla \beta\times \nabla g
-FG\nabla\beta\times \nabla f
-\beta G\nabla F\times \nabla f\Big)
-2\beta^2F^2|v|^2
\Big\}\,dx\,dy\,dz
\\&&\stackrel{\eqref{eq: deals with bdy term}}
=\int_{\sP}\Big\{ 
\rot v\innpr \Big(
-\beta\frac{F^2}{2}\nabla g+\beta F G \nabla f\Big)
\\&&\qquad\qquad+\frac{F^2}{2} v\innpr\left(\nabla \beta\times \nabla g\right)
-FG v\innpr\left(\nabla\beta\times \nabla f\right)
\\&&\qquad\qquad\quad-\beta G v\innpr\left(\nabla F\times \nabla f\right)
- 2\beta^2F^2|v|^2
\Big\}\,dx\,dy\,dz
\end{eqnarray*}
and therefore
\begin{align*}
-\int_{\sP}\left( \beta v\times \nabla f\right)\innpr\left(G\nabla F\right) \,dx\,dy\,dz
&=\int_{\sP}\beta G v\innpr\left(\nabla F \times \nabla f\right) \,dx\,dy\,dz\\
&\geq
-\int_{\sP} \frac 1 8 
|\nabla F\times \nabla g+\nabla f\times \nabla G|^2\, \,dx\, dy\, dz
\\&
\quad+\int_{\sP}\Big\{ 
\rot v\innpr\Big(
-\beta\frac{F^2}{2}\nabla g+\beta F G \nabla f\Big)
+\frac{F^2}{2} v\innpr\left(\nabla \beta\times \nabla g\right) \\
&\qquad \qquad \quad
-FG v\innpr\left(\nabla\beta\times \nabla f\right)
- 2\beta^2F^2|v|^2
\Big\}\,dx\,dy\,dz.
\end{align*}
In the previous computations, substitute $f$ and $F$ by $-g$ and $-G$, $g$ and $G$ by $f$ and $F$, and $\beta$ by $\gamma$,  yielding
\begin{align*}
\int_{\sP}\left( \gamma  \nabla g\times v\right)\innpr\left(F\nabla G\right) \,dx\,dy\,dz
&\geq
-\int_{\sP} \frac 1 8 
|\nabla F\times \nabla g+\nabla f\times \nabla G|^2\, \,dx\, dy\, dz
\\&
\quad+\int_{\sP}\Big\{ 
\rot v\innpr\Big(
-\gamma\frac{G^2}{2}\nabla f+\gamma F G \nabla g\Big)
+\frac{G^2}{2} v\innpr\left(\nabla \gamma\times \nabla f\right) \\
&\qquad \qquad \quad
-FGv\innpr\left(\nabla\gamma\times \nabla g\right)
- 2\gamma^2G^2|v|^2
\Big\}\,dx\,dy\,dz.
\end{align*}
Adding the different contributions, we find that
\begin{align*}
&\frac12 \int_{\sP} \left( \beta v\times \nabla f+\gamma \nabla g \times v\right)\innpr\left(F\nabla G-G\nabla F\right) \,dx\,dy\,dz
\\
&\ge
-\int_{\sP} \frac 1 4 
|\nabla F\times \nabla g+\nabla f\times \nabla G|^2\, \,dx\, dy\, dz
\\&
\quad+\int_{\sP}\Big\{ 
\rot v\innpr\Big(
-\beta\frac{F^2}{2}\nabla g+\frac{FG}{2}\beta  \nabla f\Big)
+\frac{F^2}{2}v\innpr\left(\nabla \beta\times \nabla g\right) \\
&\qquad \qquad \quad
-\frac{FG}{2} v\innpr\left(\nabla\beta\times \nabla f\right)
- 2\beta^2F^2|v|^2
\Big\}\,dx\,dy\,dz\\
&\quad+\int_{\sP}\Big\{ 
\rot v\innpr\Big(
-\gamma\frac{G^2}{2}\nabla f+\frac{FG}{2} \gamma \nabla g\Big)
+\frac{G^2}{2} v\innpr\left(\nabla \gamma\times \nabla f\right) \\
&\qquad \qquad \quad
-\frac{FG}{2} v\innpr\left(\nabla\gamma\times \nabla g\right) 
- 2\gamma^2G^2|v|^2
\Big\}\,dx\,dy\,dz.
\end{align*}
All the absolute values of these terms are controlled 
by multiples of the term studied in the first step.
Moreover $|\nabla \beta|$ and $|\nabla \gamma|$ become small 
if $(\nabla f,\nabla g)$ is near enough to $(\nabla \bar f,\nabla \bar g)$
in $C^2(\overline \sP)$.

\noindent{\bf Last step.}
\begin{eqnarray*}
&& \int_{\sP}\Big\{ \frac12
|\nabla F\times \nabla g+\nabla f\times \nabla G|^2+\left( \nabla f\times \nabla g\right)\innpr\left(\nabla F\times \nabla G\right) \Big\}\,dx\,dy\,dz
\\&&\stackrel{\text{step 2}}\geq
 \int_{\sP}\Big\{ \frac38
|\nabla F\times \nabla g+\nabla f\times \nabla G|^2
-\alpha (G^2|\nabla f|^2+F^2 |\nabla g|^2)
\\&&\qquad\qquad\quad + \frac12\left( \beta v\times \nabla f+\gamma \nabla g \times v\right)\innpr\left(F\nabla G-G\nabla F\right)
\Big\}\,dx\,dy\,dz
\\&&\stackrel{\text{step 3}}\geq
 \int_{\sP}\Big\{ \frac18
|\nabla F\times \nabla g+\nabla f\times \nabla G|^2
-\alpha (G^2|\nabla f|^2+F^2|\nabla g|^2) \,dx\,dy\,dz
\\&&\qquad
+\int_{\sP}\Big\{ 
\rot v\innpr\Big(
-\beta\frac{F^2}{2}\nabla g+\frac{FG}{2}\beta  \nabla f\Big)
+\frac{F^2}{2} v\innpr\left(\nabla \beta\times \nabla g\right) \\
&&\qquad \qquad \quad
-\frac{FG}{2} v\innpr\left(\nabla\beta\times \nabla f\right)
- 2\beta^2F^2|v|^2
\Big\}\,dx\,dy\,dz\\
&&\qquad+\int_{\sP}\Big\{ 
 \rot v\innpr\Big(
-\gamma\frac{G^2}{2}\nabla f+\frac{FG}{2} \gamma \nabla g\Big)
+\frac{G^2}{2} v\innpr\left(\nabla \gamma\times \nabla f\right) \\
&&\qquad \qquad \quad
-\frac{FG}{2} v\innpr\left(\nabla\gamma\times \nabla g\right)
- 2\gamma^2G^2|v|^2
\Big\}\,dx\,dy\,dz
\\&&\stackrel{\text{step 1}}\geq
 \int_{\sP} \frac{1}{16}
|\nabla F\times \nabla g+\nabla f\times \nabla G|^2
\,dx\,dy\,dz
\\&&\stackrel{\text{step 1}}\geq
\int_{\sP}\Big\{ 
\frac 1 {32}(v\innpr \nabla F)^2+\frac 1{32}(v\innpr \nabla G)^2
\\&&\qquad\qquad\quad+(1-O(\|v'\|_{C(\overline \sP)}))
\frac {\pi^2\min_{\overline \sP}  v_1^2} {32L^2}
(F^2+G^2)
\Big\}\,dx\,dy\,dz
\end{eqnarray*}
if $(\nabla f,\nabla g)$ is in some small neighborhood of
$(\nabla \bar f,\nabla \bar g)$ in  $C^2(\overline \sP)$ (independent of $H$). 
\end{proof}

Theorem \ref{thm: quadratic part} implies local uniqueness of solutions (existence will be discussed later).

\begin{thm}
\label{thm: uniqueness}
Assume that $(f,g)$ and $(\widetilde f,\widetilde g)$ and  are admissible (see (Ad1)--(Ad2) above), such that
$$
(f(x,y,z),g(x,y,z))=(\widetilde f(x,y,z),\widetilde g(x,y,z)),~\text{for all } (x,y,z)\in \{0,L\}\times \RR^2,
$$
and both $(f,g)$ and $(\widetilde f,\widetilde g)$ are solutions to \eqref{eq: deux}.
In addition let $(\bar f,\bar g)$ be as in Theorem \ref{thm: quadratic part} and $H$ be as in Theorem \ref{thm: main in intro} (but $H_0$ can be assumed of class $C^2$ only).
If $(\nabla f,\nabla g)$  and 
$(\nabla \widetilde f,\nabla \widetilde g)$ are 
in a sufficiently small open convex neighborhood of $(\nabla \bar f,\nabla \bar g)$
in  $C^2(\overline \sP)$ and $\|H_0''\|_{C(\overline \sQ)}$ is sufficiently small, 
then $(f,g)=(\widetilde f,\widetilde g)$ on $[0,L]\times \RR^2$.
\end{thm}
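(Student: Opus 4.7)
The plan is to exploit the variational structure. Both $(f,g)$ and $(\widetilde f,\widetilde g)$ are critical points of the integral functional \eqref{eq: integral functional} on the affine space of admissible pairs with the prescribed boundary data, so I will interpolate linearly between them, apply Rolle's theorem to the derivative of the functional along the segment, and then invoke the coercivity of Theorem \ref{thm: quadratic part} to annihilate the difference.

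\medskip

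\textbf{Admissibility of the difference.} First I would set $F\coloneqq\widetilde f-f$, $G\coloneqq\widetilde g-g$ and verify that $(F,G)$ satisfies (Ad'1)--(Ad'3). Periodicity of $\nabla F$ and $\nabla G$ in $(y,z)$ is immediate from (Ad2); because the linear parts of any admissible pair are fixed by its boundary data (as noted in the introduction) and $f,\widetilde f$ agree on $\{0,L\}\times \RR^2$, the linear parts coincide and $F$ itself is $(P_1,P_2)$-periodic, and similarly for $G$. Both $F$ and $G$ vanish on $\partial D$ by hypothesis, and $(F,G)\in H^1_{loc}(D)$ since they are $C^3$.

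\medskip

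\textbf{Rolle on the linear interpolation.} For $s\in[0,1]$ let $(f_s,g_s)\coloneqq(f+sF,\,g+sG)$ and
\[
\Phi(s)\coloneqq \int_\sP\Big\{\tfrac12|\nabla f_s\times\nabla g_s|^2+H(f_s,g_s)\Big\}\,dx\,dy\,dz.
\]
This is a smooth admissible variation joining the two critical points, so integrating \eqref{eq: deux} against $(F,G)$ at each endpoint (legitimate because $(F,G)$ is periodic and vanishes on $\partial D$) yields $\Phi'(0)=\Phi'(1)=0$. Differentiating $\Phi$ twice in $s$, using the expansion $\nabla f_s\times\nabla g_s=\nabla f\times\nabla g+s(\nabla F\times\nabla g+\nabla f\times\nabla G)+s^2\,\nabla F\times\nabla G$, produces exactly the bilinear form:
\[
\Phi''(s)=B_{(f_s,g_s)}\big((F,G),(F,G)\big).
\]
Rolle's theorem applied to $\Phi'$ on $[0,1]$ then yields some $s^*\in(0,1)$ with $B_{(f_{s^*},g_{s^*})}((F,G),(F,G))=0$.

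\medskip

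\textbf{Coercivity closes the argument.} Since the given open convex neighborhood in $C^2(\overline\sP)$ contains both $(\nabla f,\nabla g)$ and $(\nabla\widetilde f,\nabla\widetilde g)$, it also contains $(\nabla f_{s^*},\nabla g_{s^*})$; shrinking it if needed, Theorem \ref{thm: quadratic part} applies at $(f_{s^*},g_{s^*})$ and produces a constant $\kappa>0$ (independent of $H_0$) such that
\[
0\ \geq\ \int_\sP\Big\{\kappa(F^2+G^2)+H_0''(f_{s^*},g_{s^*})\big[(F,G),(F,G)\big]\Big\}\,dx\,dy\,dz.
\]
By the periodicity of $H_0$ on the lattice spanned by $RP_1 e_1$ and $RP_2 e_2$, one has $\sup_{\RR^2}|H_0''|=\|H_0''\|_{C(\overline\sQ)}$, so the second integrand is bounded below by $-3\|H_0''\|_{C(\overline\sQ)}(F^2+G^2)$. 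Taking $\|H_0''\|_{C(\overline\sQ)}$ small enough that $\kappa-3\|H_0''\|_{C(\overline\sQ)}>0$ forces $F=G=0$ on $\sP$, hence on $[0,L]\times\RR^2$ by periodicity. The main obstacle is really the initial admissibility check and the endpoint identity $\Phi'(0)=\Phi'(1)=0$, both of which rely crucially on the shared boundary values; once these are in hand, Rolle combined with Theorem \ref{thm: quadratic part} closes the argument directly.
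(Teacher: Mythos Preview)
Your argument is correct and is essentially the same as the paper's: both interpolate linearly between the two solutions, observe that the functional along this segment has vanishing first derivative at the endpoints, and then use the positivity of the second derivative coming from Theorem \ref{thm: quadratic part} to conclude. The paper phrases this as a contradiction (strictly convex on $[0,1]$ yet $\Phi'(0)=\Phi'(1)=0$), whereas you invoke Rolle explicitly to locate an interior zero of $\Phi''$ and then use the lower bound of Theorem \ref{thm: quadratic part}; you are also more explicit about the admissibility of $(F,G)$ and about why the $H_0''$ term is absorbed.
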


\begin{proof}
If they were not equal, we could consider 
$$(f_\theta,g_\theta)=\theta(\widetilde f,\widetilde g)+(1-\theta)(f,g)$$
for $\theta$ in some slightly larger interval than $[0,1]$.  The map
$$\theta \rightarrow
\int_{\sP}\Big\{ \frac12|\nabla f_\theta\times \nabla g_\theta|^2+H(f_\theta,g_\theta)\Big\}\,dx\,dy\,dz
$$
would be of class $C^2$, its derivative would vanish at
$\theta=0$ and $\theta=1$, and its second derivative would be strictly positive 
on $[0,1]$
(by Theorem \ref{thm: quadratic part}), which is a contradiction.
\end{proof}

\begin{rem}
The proof of Theorem \ref{thm: uniqueness} 
relies on the local convexity
of the functional \eqref{eq: integral functional}.
It is natural to wonder if local convexity may lead to existence too.
Theorem \ref{thm: quadratic part} shows that the quadratic form $B_{(f,g)}((F,G), (F,G))$ is positive definite 
if $(\nabla f,\nabla g)$ is in some small neighborhood of
$(\nabla \bar f,\nabla \bar g)$ in  $C^2(\overline \sP)$  (independent of $H$ as long as $\|H''(f,g)\|_{C(\overline \sP)}$ is sufficiently small).
However, as mentioned above, the quadratic form is not coercive at $(f,g)=(\bar f,\bar g)$.
This feature creates difficulties in getting good a priori bounds on minimizing sequences.
One can hope that they may converge in some weak sense to some kind of weak solution and indeed such kind of results, in a more general setting, are obtained in \cite{Bu:2012}.
One can also wonder if some kind of regularization of the integral functional followed by a limiting process could lead to regular solutions. If this were feasible, it seems likely that it would rely  on a regularity analysis
similar to the one that follows. We leave these considerations 
for further works.
\end{rem}

To implement a Nash-Moser iteration, we introduce for $\epsilon\in[0,1]$ the {\em regularized quadratic form} 
\begin{align*}
(F,G)&\mapsto \int_{\sP}\Big\{ \frac12
|\nabla F\times \nabla g+\nabla f\times \nabla G|^2
+\left( \nabla f\times \nabla g\right)\innpr\left(\nabla F\times \nabla G\right) 
\\
&\qquad
+\frac{\epsilon}{2}(|\nabla F|^2+|\nabla G|^2)
+\frac12 \big(\partial_f^2 H(f,g)F^2+2\partial_f \partial_g H(f,g) FG+ \partial_g^2 H(f,g) G^2\big)\Big\}\,dx\,dy\,dz,
\end{align*}
which is clearly also positive definite if 
 $(\nabla f,\nabla g)$ is in some small neighborhood of
$(\nabla \bar f,\nabla \bar g)$ in  $C^2(\overline \sP)$ and 
 $\|H''(f,g)\|_{C(\overline \sP)}$ is small enough, uniformly in $\epsilon \in [0,1]$, and coercive for a fixed $\epsilon\in (0,1]$. 
Again, the regularized quadratic form can be written $\frac 1 2 B_{(f,g)}^\epsilon((F,G),(F,G))$,
where $B_{(f,g)}^\epsilon$ is the corresponding symmetric bilinear form.

For an admissible $(f,g)\in C^3(\overline D)$ (see (Ad1)--(Ad2) above),  we are
interested in
the map $(\mu,\nu)\mapsto (F,G)$ defined as follows:
\begin{itemize}
\item
$(F,G)\in H^1_{loc}(D)$ 
is admissible in the sense of (Ad'1)--(Ad'3),
\item
$(\mu,\nu)\in L^2_{loc}(D)$ is $(P_1,P_2)$-periodic in $y$ and $z$,
\item
for all $\delta F,\delta G\in H^1_{loc}(D) $ 
that are admissible in the sense of (Ad'1)--(Ad'3)
\begin{equation}\label{weak linear problem}
B_{(f,g)}^\epsilon((F,G),(\delta F,\delta G))=\int_{\sP}
(\mu\delta F+\nu\delta G)\,dx\,dy\,dz.	
\end{equation}
\end{itemize}

If $(f,g)$ is admissible and $(F,G)$ is admissible in $H^2_{loc}(D)$,
\eqref{weak linear problem} is equivalent to the system
\begin{equation}
\label{eq: equivalent system}
\begin{array}{l}
\displaystyle
\mu=-\Div\Big(\nabla g\times(\nabla F\times\nabla g
+\nabla f\times \nabla G)
+\nabla G\times(\nabla f\times \nabla g)\Big)
\\~~~~~
-\epsilon \Delta F
+\partial_f^2 H(f,g)F+\partial_f \partial_g H(f,g) G,
\\
\displaystyle
\nu=-\Div\Big((\nabla F\times\nabla g
+\nabla f\times \nabla G)\times \nabla f
+(\nabla f\times \nabla g)\times \nabla F\Big)
\\~~~~~
-\epsilon \Delta G
+\partial_f \partial_g H(f,g) F
+\partial_g^2 H(f,g)G.
\end{array}
\end{equation}
In particular, if $\epsilon=0$,
then the linear operator related
to $B_{(f,g)}^\epsilon$ is the linearization of \eqref{eq: deux} around $(f,g)$.

Thanks to the fact that the regularized quadratic form is positive definite,
$(F,G)$ is uniquely defined by $(\mu,\nu)$. We leave for later
the issue of the existence of $(F,G)$ and its regularity, as dealt
with in \cite{KoNi}.

\begin{prop}
\label{prop: first estimate} 
Assume that $\nabla \bar f$ and $\nabla \bar g$ are constant,
that the first component of $\bar v$ does not vanish and that
\eqref{eq: for simplicity} holds true.
If $f,g$ (admissible) 
are of class  $C^3(\overline D)$ and 
$H$ (admissible) is of class $C^2(\RR^2)$,
 $(\nabla f,\nabla g)$ is in some small enough neighborhood of
$(\nabla \bar f,\nabla \bar g)$ in  $C^2(\overline \sP)$ and 
 $\|H''(f,g)\|_{C(\overline \sP)}$ is small enough, then
\begin{equation}
 B_{(f,g)}^\epsilon((F,G),(F,G))  
\geq
\int_{\sP}\Big\{ 
\frac 1 {16}(v\innpr \nabla F)^2+\frac 1{16}(v\innpr \nabla G)^2
+
\frac {\pi^2\min_{\overline \sP}  v_1^2} {32L^2}
(F^2+G^2)\Big\}\,dx\,dy\,dz.
\label{eq: Bfg is coercive}
\end{equation}
Moreover
\begin{equation}
\label{eq: FG less munu} 
\|(F,G)\|_{L^2(\sP)} \leq
\frac {32L^2}{\pi^2\min_{\overline \sP}  v_1^2}\,
\|(\mu,\nu)\|_{L^2(\sP)}
\end{equation}
and
\begin{equation*}
\int_{\sP}\Big\{ 
\frac 1 {16}(v\innpr \nabla F)^2+\frac 1{16}(v\innpr \nabla G)^2
\Big\}\,dx\,dy\,dz
\leq
\frac {32L^2}{\pi^2\min_{\overline \sP}  v_1^2}\,
\|(\mu,\nu)\|^2_{L^2(\sP)}
\end{equation*}
for all periodic $(\mu,\nu)\in L^2_{loc}(D)$ 
and all admissible $(F,G)\in H^1_{loc}(D)$
satisfying \eqref{weak linear problem}.
These estimates are uniform in $\epsilon\in[0,1]$.
\end{prop}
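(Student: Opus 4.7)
The plan is to reduce all three claims to Theorem \ref{thm: quadratic part} combined with testing the weak equation \eqref{weak linear problem} against $(F,G)$ itself, with uniformity in $\epsilon\in[0,1]$ coming for free. For \eqref{eq: Bfg is coercive}, I would write $B_{(f,g)}^\epsilon((F,G),(F,G)) = B_{(f,g)}((F,G),(F,G)) + \epsilon\int_\sP(|\nabla F|^2+|\nabla G|^2)\,dx\,dy\,dz$ and simply drop the non-negative $\epsilon$-term, which already produces a bound independent of $\epsilon$. Applying Theorem \ref{thm: quadratic part} then leaves the task of absorbing the $H''$-quadratic form and the $O(\|v'\|_{C(\overline\sP)})$ defect in the Poincar\'e coefficient into half of the Poincar\'e lower bound. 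The pointwise estimate
\[
\big|\partial_f^2 H(f,g)F^2 + 2\partial_f\partial_g H(f,g)FG + \partial_g^2 H(f,g) G^2\big| \lesssim \|H''(f,g)\|_{C(\overline\sP)}(F^2+G^2),
\]
combined with choosing $(\nabla f,\nabla g)$ close enough to $(\nabla\bar f,\nabla\bar g)$ in $C^2(\overline\sP)$ that $1-O(\|v'\|_{C(\overline\sP)})\geq 3/4$, and with $\|H''(f,g)\|_{C(\overline\sP)}$ small enough that the $H''$-contribution is at most $\pi^2\min_{\overline\sP}v_1^2/(64L^2)$ times $F^2+G^2$, leaves at least $\pi^2\min_{\overline\sP}v_1^2/(32L^2)$ as coefficient of $F^2+G^2$, yielding \eqref{eq: Bfg is coercive}.

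For \eqref{eq: FG less munu}, I would take $(\delta F,\delta G)=(F,G)$ as test pair in \eqref{weak linear problem}, which is admissible by hypothesis. Cauchy--Schwarz then gives $\int_\sP(\mu F+\nu G)\,dx\,dy\,dz \leq \|(\mu,\nu)\|_{L^2(\sP)}\|(F,G)\|_{L^2(\sP)}$, and combining with the Poincar\'e part of \eqref{eq: Bfg is coercive} produces
\[
\frac{\pi^2\min_{\overline\sP}v_1^2}{32L^2}\|(F,G)\|_{L^2(\sP)}^2 \leq \|(\mu,\nu)\|_{L^2(\sP)}\|(F,G)\|_{L^2(\sP)},
\]
from which \eqref{eq: FG less munu} follows after dividing by $\|(F,G)\|_{L^2(\sP)}$ (the case $(F,G)=0$ being trivial).

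The weighted gradient estimate uses the same identity $B_{(f,g)}^\epsilon((F,G),(F,G)) = \int_\sP(\mu F+\nu G)\,dx\,dy\,dz$ combined with the first two terms of \eqref{eq: Bfg is coercive}, giving
\[
\int_\sP\Big\{\tfrac{1}{16}(v\innpr\nabla F)^2 + \tfrac{1}{16}(v\innpr\nabla G)^2\Big\}\,dx\,dy\,dz \leq \|(\mu,\nu)\|_{L^2(\sP)}\|(F,G)\|_{L^2(\sP)},
\]
and applying \eqref{eq: FG less munu} to the last factor produces the stated bound. There is no substantive obstacle here: once the smallness of $\|v'\|_{C(\overline\sP)}$ and $\|H''(f,g)\|_{C(\overline\sP)}$ has been fixed, all three estimates amount to bookkeeping of the constants already produced by Theorem \ref{thm: quadratic part}, and the non-negativity of the $\epsilon$-regularization guarantees the stated uniformity in $\epsilon\in[0,1]$.
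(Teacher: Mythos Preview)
Your proposal is correct and follows essentially the same route as the paper: drop the non-negative $\epsilon$-term, invoke Theorem \ref{thm: quadratic part}, absorb the $H''$-quadratic form and the $O(\|v'\|_{C(\overline\sP)})$ defect into the Poincar\'e term by smallness, and then test \eqref{weak linear problem} with $(\delta F,\delta G)=(F,G)$ together with Cauchy--Schwarz. Your tracking of the constants (choosing $1-O(\|v'\|)\geq 3/4$ and the $H''$ contribution $\leq \pi^2\min_{\overline\sP}v_1^2/(64L^2)$) is in fact slightly more transparent than the paper's.
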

\begin{proof}
Assuming  $|v'|$ and $|H''(f,g)|$ small enough (as we can), we get in 
\eqref{eq: quadra larger}
\begin{align*}
&(1-O(\|v'\|_{C(\overline \sP)}))\frac {\pi^2\min_{\overline \sP}  v_1^2}
 {32L^2}(F^2+G^2)+\frac 1 2 \Big(\partial_f^2 H(f,g)F^2+2\partial_f \partial_g H(f,g) FG+\partial_g^2 H(f,g)G^2\Big)
\\
&\quad \geq 
\frac {\pi^2\min_{\overline \sP}  v_1^2} {64L^2}(F^2+G^2)
\end{align*}
and  inequality \eqref{eq: Bfg is coercive} follows from \eqref{eq: quadra larger}.
Applying \eqref{weak linear problem} to $(\delta F,\delta G)=(F,G)$,
\begin{align*}
\int_{\sP}\Big\{ 
\frac 1 {16}(v\innpr \nabla F)^2+\frac 1{16}(v\innpr \nabla G)^2
+
\frac {\pi^2\min_{\overline \sP}  v_1^2} {32L^2}
(F^2+G^2)\Big\}\,dx\,dy\,dz&\leq
 B_{(f,g)}^\epsilon((F,G),(F,G))  
\\
&
\leq \|(\mu,\nu)\|_{L^2(\sP)}
 \|(F,G)\|_{L^2(\sP)}\, ,
\end{align*}
$$
\|(F,G)\|_{L^2(\sP)} \leq
\frac {32L^2}{\pi^2\min_{\overline \sP}  v_1^2}\,
\|(\mu,\nu)\|_{L^2(\sP)}
$$
and
\begin{equation*}
\int_{\sP}\Big\{ 
\frac 1 {16}(v\innpr \nabla F)^2+\frac 1{16}(v\innpr \nabla G)^2
\Big\}\,dx\,dy\,dz
\leq
\frac {32L^2}{\pi^2\min_{\overline \sP}  v_1^2}\,
\|(\mu,\nu)\|^2_{L^2(\sP)}\, .
\end{equation*}
\end{proof}

\begin{prop}
\label{prop: solve second partial in x}
Assume that the first component of $\bar v$ does not vanish and
that $(\nabla f,\nabla g)$ 
is near enough to $(\nabla \bar f,\nabla \bar g)$ in $C^2(\overline \sP)$.
Then  system \eqref{eq: equivalent system} allows one
to express the  partial derivatives
$\partial^2_{11}F$ and $\partial^2_{11}G$ linearly with respect to
$\mu$, $\nu$, the other second-order partial derivatives of $F$ and $G$,
the first-order partial derivatives of $F$ and $G$, and $F$ and $G$.
The coefficients of these two linear expressions are rational functions of
$f',g',f'',g'',H''(f,g), \epsilon$ (without singularities on $\overline D$).
More precisely,
\begin{align*}
\partial_{11}^2F=a_1\mu&+a_2\nu
+a_3\partial_{12}^2F+a_4\partial_{13}^2F
+a_5\partial_{22}^2F+a_6\partial_{23}^2F+a_7\partial_{33}^2F
\\&+a_8\partial_{12}^2G+a_9\partial_{13}^2G
+a_{10}\partial_{22}^2G+a_{11}\partial_{23}^2G+a_{12}\partial_{33}^2G
\\&+a_{13}\partial_1F+a_{14}\partial_2 F+a_{15} \partial_3 F
+a_{16}\partial_1G+a_{17}\partial_2 G+a_{18} \partial_3G
+a_{19}F+a_{20}G,
\end{align*}
where each $a_i$, $1\le i\le 20$, is of the form
$$
a_i=\frac{Q_i}{v_1^2+\epsilon|(\partial_2f,\partial_3f,\partial_2g,\partial_3g)|^2+\epsilon^2},
$$
for some polynomial
$$
Q_{i}=
\begin{cases}
Q_i(f',g',\epsilon), & 1\le i \le 12,\\
Q_i(f'',g''), & 13\le i \le 18,\\
Q_i(H''), & 19\le i \le 20.
\end{cases}
$$
The denominator does not vanish on $\overline D$ because $(\nabla f,\nabla g)$
is supposed near enough to $(\nabla \bar f,\nabla \bar g)$ and $\epsilon\in[0,1]$.
Moreover, for all integers $1\leq i\leq 20$ and $\ell\geq 0$,
$$ \|a_i\|_{C^{\ell}(\overline \sP)}=
\begin{cases}
O\Big(\|(f,g)\|_{C^{\ell+1}(\overline \sP)}+1\Big), & 1\le i\le 12,\\
O\Big(\|(f,g)\|_{C^{\ell+2}(\overline \sP)}+1\Big), & 13\le i \le 18,\\
O\Big(\|H''(f,g)\|_{C^{\ell}(\overline{\sP})}+\|(f,g)\|_{C^{\ell+1}(\overline \sP)}+1\Big), & 19\le i \le 20.
\end{cases}
$$
if all norms are well defined.
Analogous results hold for $\partial_{11}^2G$ and all
the estimates are uniform in $\epsilon\in[0,1]$.
\end{prop}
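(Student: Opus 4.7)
The plan is to identify the ``principal part in the $x$-direction'' of the linear operator on the right-hand side of \eqref{eq: equivalent system}. I would first apply the vector triple product identity $a\times(b\times c)=b(a\innpr c)-c(a\innpr b)$ to expand each of the six triple products in the system into scalar-times-gradient expressions, and then use the product rule to expand the outer divergences. Collecting the coefficients of $\partial_{11}^2F$ and $\partial_{11}^2G$ (including the contributions $-\epsilon\,\partial_{11}^2F$ and $-\epsilon\,\partial_{11}^2G$ from $-\epsilon\Delta F$ and $-\epsilon\Delta G$), the two equations take the form
\[
A\begin{pmatrix}\partial_{11}^2F\\ \partial_{11}^2G\end{pmatrix}=\Psi,
\]
where
\[
-A=\begin{pmatrix}(\partial_2g)^2+(\partial_3g)^2+\epsilon & -(\partial_2f\,\partial_2g+\partial_3f\,\partial_3g)\\ -(\partial_2f\,\partial_2g+\partial_3f\,\partial_3g)& (\partial_2f)^2+(\partial_3f)^2+\epsilon\end{pmatrix}
\]
and $\Psi\in\RR^2$ is a linear combination of $\mu$, $\nu$, the remaining second-order derivatives of $F,G$, their first-order derivatives, and $F,G$ themselves, with coefficients that are polynomial combinations of $f',g',f'',g'',H''(f,g),\epsilon$.

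By Lagrange's identity applied to the $2$-vectors $(\partial_2f,\partial_3f)$ and $(\partial_2g,\partial_3g)$,
\[
\det A=(\partial_2f\,\partial_3g-\partial_3f\,\partial_2g)^2+\epsilon\,|(\partial_2f,\partial_3f,\partial_2g,\partial_3g)|^2+\epsilon^2=v_1^2+\epsilon\,|(\partial_2f,\partial_3f,\partial_2g,\partial_3g)|^2+\epsilon^2,
\]
which is precisely the denominator in the statement. Since $\bar v_1\neq 0$ and $(\nabla f,\nabla g)$ lies in a sufficiently small $C^2(\overline\sP)$-neighbourhood of $(\nabla\bar f,\nabla\bar g)$, $\det A$ is bounded below by a positive constant uniformly in $\epsilon\in[0,1]$. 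Cramer's rule then yields the claimed expression $a_i=Q_i/\det A$ for each coefficient. The structure of each $Q_i$ is read off from the origin of the corresponding term in the preceding expansion: $\mu,\nu$ and the remaining second-order derivatives of $F,G$ come from the principal parts of $-\Div$, so their coefficients are polynomials in $f',g',\epsilon$ only; the first-order derivatives of $F,G$ pick up an extra factor of $f''$ or $g''$ from differentiating coefficients in the expansion of the divergences; and $F,G$ appear only through the $H$-terms, so their coefficients depend on $H''(f,g)$.

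The $C^\ell$-bounds on the $a_i$ then follow from the non-degeneracy of $\det A$ (so that $1/\det A$ is a smooth, uniformly bounded function of $f',g',\epsilon$ in the relevant neighbourhood) together with standard product and chain rule $C^\ell$-estimates on the polynomial expressions $Q_i$: the $C^{\ell+1}$-bound on $(f,g)$ for $1\le i\le 12$ comes from $Q_i$ being polynomial in $f',g',\epsilon$, the $C^{\ell+2}$-bound for $13\le i\le 18$ from the one extra factor of $f''$ or $g''$ in $Q_i$, and the additional $\|H''(f,g)\|_{C^\ell(\overline\sP)}$ term for $i\in\{19,20\}$ from the $H''(f,g)$-factor in $Q_i$. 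The expression for $\partial_{11}^2G$ is obtained symmetrically from the same $2\times 2$ system. The main obstacle is not conceptual but computational: it lies in patiently expanding the six triple products and all the outer divergences, and tracking the cancellations that produce the simple symmetric matrix $A$. The key algebraic point is the identity $\det A=v_1^2+O(\epsilon)$, which expresses that the hypothesis $\bar v_1\neq 0$ makes the linearised operator non-characteristic in the $x$-direction, even in the degenerate case $\epsilon=0$.
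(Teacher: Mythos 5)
Your proposal is correct and follows essentially the same route as the paper: isolate the coefficients of $\partial_{11}^2F$ and $\partial_{11}^2G$ in \eqref{eq: equivalent system}, observe that the resulting $2\times 2$ system has determinant $v_1^2+\epsilon|(\partial_2f,\partial_3f,\partial_2g,\partial_3g)|^2+\epsilon^2$ (your Lagrange-identity computation is the paper's $|(J\nabla f)\times(J\nabla g)|^2=v_1^2$ with $J=\mathrm{diag}(0,1,1)$), invert by Cramer's rule, and control the $C^\ell$ norms of the coefficients via the composition estimate \eqref{eq: composition}. The only cosmetic difference is that the paper organizes the second-order terms through the identity $\rot(\nabla F\times\nabla g)=\Delta g\,\nabla F-\Delta F\,\nabla g+F''\nabla g-g''\nabla F$ rather than expanding the triple products and divergences directly.
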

\begin{proof}
If we keep only the second-order terms in $(F,G)$, we get
\begin{align*}
\mu&=\nabla g\innpr \rot(\nabla F\times\nabla g
+\nabla f\times \nabla G)
-\epsilon\Delta F+\ldots,\\
\nu&=-\rot(\nabla F\times\nabla g
+\nabla f\times \nabla G)\innpr \nabla f-\epsilon\Delta G+\ldots
\end{align*}
Observe that
\[
\rot(\nabla F\times \nabla g)=\Delta g \nabla F-\Delta F \nabla g+F''\nabla g-g''\nabla F
\]
and thus
\begin{align*}
\mu&=
 \nabla g\innpr\left((F''-\Delta F I)\nabla g\right)
- \nabla g\innpr \left((G''-\Delta GI)\nabla f\right)
-\epsilon \Delta F+\ldots,\\
\nu&=
-\nabla f\innpr\left((F''-\Delta FI)\nabla g\right)
+ \nabla f\innpr\left((G''-\Delta GI)\nabla f\right) -\epsilon\Delta G
+\ldots
\end{align*}
where $I$ is the identity matrix. To see that this allows one 
to express $\partial^2_{11}F$ and $\partial^2_{11}G$ with respect to
$\mu$, $\nu$, the other second-order partial derivatives of $F$ and $G$,
and the first-order partial derivatives of $F$ and $G$, and $F$ and $G$,
it is sufficient 
to study
$$
\begin{array}{l}
\mu=-\partial_{11}^2F\nabla g\innpr\left(J \nabla g\right)
+\partial_{11}^2G \nabla f\innpr\left(J\nabla g\right)
-\epsilon\partial_{11}^2F+\ldots
\\
\nu=
\partial_{11}^2F \nabla f\innpr\left(J\nabla g\right)
-\partial_{11}^2G \nabla f\innpr\left(J\nabla f\right) -\epsilon \partial_{11}^2G
+\ldots
\end{array}
$$
where $J$ is the diagonal matrix with entries $(0,1,1)$ on the diagonal and the remainders now also contain the other second-order partial derivatives of $F$ and $G$.
The discriminant of this system for $(\partial_{11}^2F,\partial_{11}^2G)$
is
\begin{align*}
(|J\nabla g|^2+\epsilon)(|J\nabla f|^2+\epsilon)
-\left(\left( J\nabla f\right)\innpr\left(J\nabla g\right)\right) ^2
&=|(J\nabla f)\times (J\nabla g)|^2
+\epsilon|J\nabla f|^2+\epsilon|J\nabla g|^2+\epsilon^2 
\\&=v_1^2
+\epsilon|J\nabla f|^2+\epsilon|J\nabla g|^2+\epsilon^2.
\end{align*}

We estimate 
$\|a_i\|_{C^{\ell}(\overline \sP)}$, $1\le i \le 20$ using the inequality
\begin{equation}
\label{eq: composition}
\|\xi(u_1, \ldots, u_N)\|_{C^k(\overline \sP)} \le C\|\xi\|_{C^k} \big(1+\|u_1\|_{C^k(\overline \sP)}+ \cdots + \|u_N\|_{C^k(\overline \sP)}\big), 
\end{equation}
for $\xi\in C^k([-M, M]^N)$ and $u_j \in C^k(\overline \sP)$ with $\|u_j\|_{C(\overline \sP)} \le M$ for $1\le j\le N$, which e.g.~follows by interpolation in $C^k$ spaces (see e.g.\ Theorem 2.2.1 on p.\ 143 of \cite{Ha}) and the Fa\`a di Bruno formula.
Hence
$$O\left( \|a_i\|_{C^{\ell}(\overline \sP)}\right)=
\begin{cases}
O(\|(f,g)\|_{C^{\ell+1}(\overline \sP)}+1),& 1\le i \le 12,\\
O(\|(f,g)\|_{C^{\ell+2}(\overline \sP)}+1),& 13\le i \le 18,\\
O(\|H''(f,g)\|_{C^\ell(\overline \sP)}+\|(f,g)\|_{C^{\ell+1}(\overline \sP)}+1), & 19\le i \le 20.
\end{cases}
$$
\end{proof}

We now study to which extent $B_{(f,g)}^\epsilon$ 
commutes with differentiations in $y$ and
$z$, following the general approach of \cite{KoNi}. 

\begin{thm}
\label{thm: operator A}
Let $(\nabla f,\nabla g)$ be in any bounded subset of $C^1(\overline\sP)$,
$r\in\{1,2,3,\ldots\}$, 
$(f,g)\in C^{r+2}(\overline D)$, $H\in C^{r+2}(\RR^2)$  
and $(F,G)\in H^{2r+1}_{loc}(D)$ (all admissible).
Then, for $j\in \{2,3\}$,
\begin{align*}
&B_{(f,g)}^\epsilon((\partial_j^{r}F,\partial_j^{r}G),(\partial_j^{r}F,\partial_j^{r}G) )
-B_{(f,g)}^\epsilon((F,G),(-1)^r (\partial_j^{2r}F,\partial_j^{2r}G) )
\\
&=
\sum_{p\in \sS}
\int_{\sP}\partial_j^{2r-s_p-t_p}
L_{p}\, \partial_j^{s_p}u_p\, \partial_j^{t_p}v_p\, \,dx\,dy\,dz
+\sum_{p\in \widetilde \sS}
\int_{\sP}\partial_j^{r-\widetilde s_p}
\widetilde L_{p}\, \partial_j^{\widetilde s_p}\widetilde u_p\, \partial_j^{r}\widetilde v_p\, \,dx\,dy\,dz,
\end{align*}
where, for each $p$ in some finite sets $\sS$ and $\widetilde \sS$ of indices,
$$
0\leq s_p\leq t_p\leq r-1,~2\leq  2r-s_p-t_p\leq r+1, \qquad 0\leq \widetilde s_p\leq r-1
$$
and
$$
\{u_p,v_p\}\subset \{\partial_1 F,\partial_2F,\partial_3F,\partial_1G, \partial_2G,\partial_3G\},
\qquad
\{\widetilde u_p,\widetilde v_p\}\subset \{F,G\}.
$$
For each $p$, the coefficient 
$L_p(x,y,z)$ 
is a polynomial of all partial derivatives of $f$ and $g$ of order $1$, while 
$\widetilde L_p$ is a second order partial derivative of $H$ (with respect to $f$ and $g$).
Moreover we have the following estimate, where the dependence on $r$
is more explicitly stated: 
\begin{equation}
\label{eq: r dependence}
\left\|\sum_{\substack{p\in \sS:\\\, s_p=t_p=r-1}} \partial_j^{2r-s_p-t_p}L_p\right\|_{C(\overline \sP)}
=\left\|\sum_{\substack{p\in \sS:\\\, s_p=t_p=r-1}} \partial_j^{2}L_p\right\|_{C(\overline \sP)}= O(r^2)
\|(\partial_j\nabla f,\partial_j\nabla g)
\|_{C^1(\overline \sP)}
\end{equation}
(the function $O(r^2)$ being independent of $f,g,F,G, H''(f,g)$ and $\epsilon$).
Finally, for the other indices $p$,
\begin{equation}
\label{eq: sup L}
\begin{aligned}
&\|\partial_{j}^{2r-s_p-t_p}L_p\|_{C(\overline\sP)}
= O\Big(\|(\nabla f,\nabla g)\|_{C^{2r-s_p-t_p}(\overline \sP)}+1\Big), && p \in \sS, 
\\
&\|\partial_j^{r-\tilde s_p} \widetilde L_p\|_{C(\overline \sP)}
= O\Big(\|H''(f,g)\|_{C^{r-\widetilde s_p}(\overline \sP)}\Big), && p\in \widetilde \sS,
\end{aligned}
\end{equation}
where the constants in the estimates may depend on $r$.
\end{thm}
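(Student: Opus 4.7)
The plan is to expand $B^\epsilon_{(f,g)}$ coordinate-wise and then redistribute derivatives via integration by parts in $\partial_j$, whose boundary contributions vanish due to $(P_1,P_2)$-periodicity in $(y,z)$. Writing out the cross products and the potential term yields
$$B^\epsilon_{(f,g)}((F,G),(\delta F,\delta G))=\sum_\alpha\int_{\sP} L_\alpha\, D_\alpha(F,G)\, D'_\alpha(\delta F,\delta G)\, dx\,dy\,dz,$$
where each pair of constant-coefficient operators $D_\alpha,D'_\alpha\in\{I,\partial_1,\partial_2,\partial_3\}$ acts on $F$ or $G$, and each $L_\alpha$ is either (i) a polynomial in the first partials of $f,g$ (the $\epsilon$-terms having constant coefficient and therefore not contributing below) or (ii) a second partial derivative of $H$ evaluated at $(f,g)$. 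Since $\partial_j$ commutes with every $D_\alpha,D'_\alpha$, performing $r$ integrations by parts in $\partial_j$ gives
$$B^\epsilon_{(f,g)}\bigl((F,G),(-1)^r(\partial_j^{2r}F,\partial_j^{2r}G)\bigr)=\sum_\alpha\int_{\sP}\partial_j^r\!\bigl[L_\alpha D_\alpha(F,G)\bigr]\cdot\partial_j^r D'_\alpha(F,G)\, dx\,dy\,dz.$$
Applying the Leibniz rule to $\partial_j^r[L_\alpha D_\alpha(F,G)]$ and cancelling the $k=0$ term against $B^\epsilon_{(f,g)}\bigl((\partial_j^r F,\partial_j^r G),(\partial_j^r F,\partial_j^r G)\bigr)$, I reduce the difference to
$$-\sum_{k=1}^{r}\binom{r}{k}\sum_\alpha\int_{\sP}\partial_j^k L_\alpha\cdot D_\alpha(\partial_j^{r-k}F,\partial_j^{r-k}G)\cdot D'_\alpha(\partial_j^{r} F,\partial_j^{r} G)\, dx\,dy\,dz.$$

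For type (ii) contributions (with $D_\alpha=D'_\alpha=I$), the substitution $\widetilde s_p=r-k$ identifies them with the second sum in the conclusion, and the estimate on $\widetilde L_p$ follows from the Fa\`a di Bruno formula applied to $\partial^2H(f,g)$. For type (i) contributions, the factor $\partial_j^r D'_\alpha(F,G)$ carries $r$ derivatives of $\partial_j$ applied to a first-order operator, violating the target $t_p\le r-1$. When $k\ge 2$, one further integration by parts in $\partial_j$ off this highest-loaded factor produces either a term with configuration $(r-k,r-1,k+1)$ or a term with $(r-k+1,r-1,k)$ of derivatives on the two first-order factors of $(F,G)$ and the coefficient $L_\alpha$; both satisfy $0\le s_p\le t_p\le r-1$ and $2\le 2r-s_p-t_p\le r+1$ after possibly interchanging $s_p\leftrightarrow t_p$. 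When $k=1$, I exploit the symmetry of $B^\epsilon_{(f,g)}$ to pair each term with the one obtained by swapping $D_\alpha\leftrightarrow D'_\alpha$; the identity $X\partial_j Y+Y\partial_j X=\partial_j(XY)$ followed by a single integration by parts then brings both factors down to $r-1$ derivatives while promoting the coefficient to $\partial_j^2 L_\alpha$, yielding terms with $s_p=t_p=r-1$ and $2r-s_p-t_p=2$.

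For \eqref{eq: r dependence}, configurations with $s_p=t_p=r-1$ arise from exactly two sources: the $k=2$ Leibniz term, with combinatorial weight $\binom{r}{2}=\tfrac12 r(r-1)$, and the symmetrized $k=1$ term, with weight $\binom{r}{1}=r$, giving a total multiplier of order $r^2$. Since $L_\alpha$ is polynomial in the first partials of $f,g$, the function $\partial_j^2 L_\alpha$ is linear in $(\partial_j\nabla f,\partial_j\nabla g)$ with coefficients polynomial in $(f',g')$, and hence bounded by $\|(\partial_j\nabla f,\partial_j\nabla g)\|_{C^1(\overline\sP)}$ on any $C^1$-bounded set of $(f,g)$, which produces the stated bound. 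The remaining estimates \eqref{eq: sup L} are direct consequences of the Leibniz and chain rules applied to $\partial_j^{2r-s_p-t_p}L_p$ and to $\partial_j^{r-\widetilde s_p}\partial^2 H(f,g)$, with constants that may depend on $r$. The main obstacle I anticipate is the combinatorial bookkeeping required to isolate precisely the $s_p=t_p=r-1$ contributions: one must verify that the single additional integration by parts used in the $k\ge 2$ case only generates configurations $(s_p,t_p)\neq(r-1,r-1)$, so that the sharp multiplier for $\partial_j^2 L_\alpha$ is indeed $O(r^2)$ rather than $O(r^3)$ or higher.
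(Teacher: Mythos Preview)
Your argument is correct and follows essentially the same route as the paper: decompose $B^\epsilon_{(f,g)}$ into elementary terms $\int_\sP L\,u\,v$ (with $L$ polynomial in $\nabla f,\nabla g$ or equal to a component of $H''(f,g)$) and redistribute $\partial_j$-derivatives by integration by parts and the Leibniz rule. The paper organizes the integrations by parts slightly differently --- it moves $r+1$ derivatives onto $Lv$ in one shot to land directly on $\partial_j^{r-1}u$, then expands, so the top-order combination $r^2\int_\sP\partial_j^2L\,\partial_j^{r-1}u\,\partial_j^{r-1}v$ falls out without your separate symmetrization for $k=1$ and extra integration by parts for each $k\ge 2$; either bookkeeping reaches the same conclusion. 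One small self-inconsistency in your final paragraph: you correctly identified just above that the $k=2$ term \emph{does} produce an $(r-1,r-1)$ contribution after the extra integration by parts, so the worry that the $k\ge 2$ step ``only generates configurations $(s_p,t_p)\neq(r-1,r-1)$'' should read $k\ge 3$, for which $s_p\le r-k+1\le r-2$ makes the claim immediate.
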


\begin{rems}
The expression
$$
B_{(f,g)}^\epsilon((\partial_j^{r}F,\partial_j^{r}G),(\partial_j^{r}F,\partial_j^{r}G) )
-B_{(f,g)}^\epsilon((F,G),(-1)^r (\partial_j^{2r}F,\partial_j^{2r}G) ),
$$
would vanish if $(\nabla f,\nabla g)$ and
$H''(f,g)$ were  independent of $y$ and $z$,
and the statement 
allows one  to estimate its size otherwise.
In the statement, we add the property $s_p\leq t_p$. 
In fact we shall
omit this property in the proof, as it is easy to get it by renaming
$s_p$ and $t_p$.
The statement would be much easier if we would 
aim at the weaker inequality $0\leq s_p\leq t_p\leq r$ (the proof would then rely on straightforward
integrations by parts). 
The crucial regularity gain $s_p,t_p\leq r-1$
has been explored in a general setting in \cite{KoNi}.
\end{rems}

\begin{proof}
The typical term of
$B_{(f,g)}^\epsilon((F,G),(F,G))$
is of either of the form 
$$
\int_{\sP}2L(x,y,z)u(x,y,z)v(x,y,z)\,dx\,dy\,dz,
$$
where 
$$\{u,v\}\subset \{\partial_1 F,\partial_2F,\partial_3F,\partial_1 G,\partial_2G,\partial_3G\}$$
and the coefficient 
$L(x,y,z)$  can be expressed as a polynomial of	
the partial derivatives of $f$ and $g$ of order $1$, or 
of the form
$$
\int_{\sP}2\widetilde L(x,y,z)\widetilde u(x,y,z)\widetilde v(x,y,z)\,dx\,dy\,dz,
$$
where
$$\{\widetilde u,\widetilde v\}\subset \{F,G\}$$ 
and $\widetilde L$ is equal to $\partial_f^2H(f,g)$, $2\partial_f \partial_g H(f,g)$ or $\partial_g^2H(f,g)$.
The typical term of
$$
B_{(f,g)}^\epsilon((\partial_j^{r}F,\partial_j^{r}G),(\partial_j^{r}F,\partial_j^{r}G) )
-B_{(f,g)}^\epsilon((F,G),(-1)^r (\partial_j^{2r}F,\partial_j^{2r}G) )
$$
is therefore either of the form 
\begin{equation*}
\int_{\sP}\Big(
2L\partial_j^ru\partial_j^rv
-(-1)^rLv\partial_j^{2r} u-(-1)^rLu\partial_j^{2r}v
\Big)\,dx\,dy\,dz
\end{equation*}
or
\begin{equation*}
\int_{\sP}\Big(
2\widetilde L\partial_j^r \widetilde u\partial_j^r \widetilde v
-(-1)^r \widetilde L\widetilde v\partial_j^{2r} \widetilde u-(-1)^r\widetilde L\widetilde u\partial_j^{2r}\widetilde v
\Big)\,dx\,dy\,dz.
\end{equation*}
We only give the details for the first type of term since the argument for the second is similar but simpler (move $r$ derivatives using integration by parts).

We get as in \cite{KoNi} (but in a simpler setting)
\begin{align*}
\int_{\sP}-(-1)^r Lv\partial_j^{2r} u\,dx\,dy\,dz
&=\int_{\sP} \partial_j^{r+1}(Lv)\partial_j^{r-1} u\,dx\,dy\,dz
\\&=\int_{\sP}\sum_{k=0}^{r+1}\binom{r+1}{k}
\partial_j^{r+1-k}L\partial_j^{k}v\partial_j^{r-1} u\,dx\,dy\,dz\\
&
=\int_{\sP}L\partial_j^{r+1}v\partial_j^{r-1} u\,dx\,dy\,dz
+\int_{\sP}(r+1)\partial_jL\partial_j^{r}v\partial_j^{r-1} u\,dx\,dy\,dz\\
&
\quad+\int_{\sP}\frac 1 2 r(r+1)\partial_j^2L\partial_j^{r-1}v\partial_j^{r-1} u\,dx\,dy\,dz
\\&\quad+\int_{\sP}\sum_{k=0}^{r-2}\binom{r+1}{k}
\partial_j^{r+1-k}L\partial_j^{k}v\partial_j^{r-1} u\,dx\,dy\,dz
\end{align*}
and thus, together with the equality one gets by permuting $u$ and $v$,
\begin{align*}
&\int_{\sP}\Big(
2L\partial_j^ru\partial_j^rv
-(-1)^r Lv\partial_j^{2r} u-(-1)^rLu\partial_j^{2r}v
\Big)\,dx\,dy\,dz\\
&
=\int_{\sP}L\partial_j^2\left(\partial_j^{r-1}u\partial_j^{r-1}v\right) \,dx\,dy\,dz
\\&\quad+\int_{\sP}(r+1)\partial_jL\partial_j\left(\partial_j^{r-1}u\partial_j^{r-1} v\right)\,dx\,dy\,dz
+\int_{\sP}r(r+1)\partial_j^2L\partial_j^{r-1}u\partial_j^{r-1} v\,dx\,dy\,dz
\\&\quad+\int_{\sP}\sum_{k=0}^{r-2}\binom{r+1}{k}
\partial_j^{r+1-k}L\left(\partial_j^{k}v\partial_j^{r-1}u+\partial_j^ku\partial_j^{r-1}v\right) \,dx\,dy\,dz\\
&
=r^2\int_{\sP}\partial_j^2L\partial_j^{r-1}u\partial_j^{r-1} v\,dx\,dy\,dz
+\int_{\sP}\sum_{k=0}^{r-2}\binom{r+1}{k}
\partial_j^{r+1-k}L\left(\partial_j^{k}v\partial_j^{r-1}u+\partial_j^ku\partial_j^{r-1}v\right) \,dx\,dy\,dz.
\end{align*}
With respect to  the $j$-th variable,
$L$ is differentiated at most $r+1$ times,
and $u$ and $v$ at most $r-1$ times.
Moreover
the term containing $\partial_j^{r-1}u\partial_j^{r-1}v$  is given by
\begin{equation*}
r^2\int_{\sP}
\partial_j^2L\partial_j^{r-1}u\partial_j^{r-1}v\,dx\,dy\,dz,
\end{equation*}
where
$$\|\partial_j^2L\|_{C(\overline\sP)}
=O\Big(
\|(\partial_j\nabla f,\partial_j\nabla g)
\|_{C^1(\overline \sP)}\Big)$$ 
(using the fact that $(\nabla f,\nabla g)$ is supposed 
to be in some bounded subset of the algebra $C^1(\overline\sP)$).
To get  \eqref{eq: sup L}, we use \eqref{eq: composition} with $k=2r-s_p-t_p$ and $\xi=L$.
\end{proof}

In the two following results, everything is uniform in $\epsilon\in[0,1]$ and we do not state explicitly
the dependence on $\epsilon$.
\begin{prop}
\label{prop: second estimate}
If $(f,g,H)\in C^3(\overline D)\times C^3(\overline D)\times C^3(\RR^2)$ 
is admissible,
$(\nabla f,\nabla g)$ is in some small enough neighborhood of
$(\nabla \bar f,\nabla \bar g)$ in  $C^2(\overline \sP)$ and
$\|H''(f, g)\|_{C(\overline \sP)}$ is small enough, then
\begin{equation}
\label{eq: FG1 less munu1}
\|(F,G)\|_{H^1(\sP)}=
O\Big(\|H''(f,g)\|_{C^1(\overline{\sP})} +1 \Big)\|(\mu,\nu)\|_{H^1(\sP)}
\end{equation}
and
$$\sum_{j\in\{2,3\}}\int_{\sP}\Big\{ 
\frac 1 {16}(v\innpr \nabla \partial_jF)^2
+\frac 1{16}(v\innpr \nabla \partial_jG)^2
\Big\}\,dx\,dy\,dz
=
O\Big(\|H''(f,g)\|_{C^1(\overline{\sP})} +1 \Big)^2 \|(\mu,\nu)\|_{H^1(\sP)}^2
$$
for all periodic $(\mu,\nu)\in H^1_{loc}(D)$ 
and all admissible $(F,G)\in H^3_{loc}(D)$
satisfying \eqref{weak linear problem}.
\end{prop}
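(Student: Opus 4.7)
The plan is to differentiate \eqref{weak linear problem} tangentially in $y$ and $z$ by using $-\partial_j^2(F,G)$ as test functions for $j\in\{2,3\}$, then exploit Theorem \ref{thm: operator A} (with $r=1$) together with the coercivity estimate \eqref{eq: Bfg is coercive} to bound $\partial_j(F,G)$, and finally recover $\partial_1(F,G)$ via the streamline control already furnished by Proposition \ref{prop: first estimate}. First I would check that $-\partial_j^2(F,G)$ satisfies (Ad'1)--(Ad'3): since $(F,G)\in H^3_{loc}(D)$ is periodic in $y,z$ and vanishes at $x\in\{0,L\}$, and $\partial_j$ is tangential to $\partial D$, so are $\partial_j^2 F$ and $\partial_j^2 G$. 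Hence \eqref{weak linear problem} with $(\delta F,\delta G)=-(\partial_j^2F,\partial_j^2 G)$, followed by integration by parts in the $j$-th direction (no boundary terms by periodicity), yields
\[
B_{(f,g)}^\epsilon\bigl((F,G),-(\partial_j^2F,\partial_j^2G)\bigr)=\int_{\sP}\bigl(\partial_j\mu\,\partial_j F+\partial_j\nu\,\partial_j G\bigr)\,dx\,dy\,dz.
\]

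Next, Theorem \ref{thm: operator A} with $r=1$ gives
\[
B_{(f,g)}^\epsilon\bigl((\partial_jF,\partial_jG),(\partial_jF,\partial_jG)\bigr)=B_{(f,g)}^\epsilon\bigl((F,G),-(\partial_j^2F,\partial_j^2G)\bigr)+R_j,
\]
where $R_j$ consists of two kinds of terms. For $r=1$ one has $s_p=t_p=0$ and the index range $0\le k\le r-2$ is empty, so the only $\sS$-contribution is $\int_\sP \partial_j^2 L_p\,u_p v_p$ with $u_p,v_p$ first-order derivatives of $(F,G)$; by \eqref{eq: r dependence}, $\|\partial_j^2 L_p\|_{C(\overline\sP)}=O(\|(\nabla f,\nabla g)-(\nabla\bar f,\nabla\bar g)\|_{C^2(\overline\sP)})$, which can be made as small as we like by shrinking the neighborhood (here we use that $\nabla\bar f,\nabla\bar g$ are constant). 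The $\widetilde\sS$-contribution is $\int_\sP \partial_j\widetilde L_p\,\widetilde u_p\,\partial_j\widetilde v_p$ with $\widetilde u_p,\widetilde v_p\in\{F,G\}$ and $\|\partial_j\widetilde L_p\|_{C(\overline\sP)}=O(\|H''(f,g)\|_{C^1(\overline\sP)})$, yielding a bound $O(\|H''(f,g)\|_{C^1(\overline\sP)})\|(F,G)\|_{L^2(\sP)}\|\nabla(F,G)\|_{L^2(\sP)}$. Applying \eqref{eq: Bfg is coercive} to $(\partial_jF,\partial_jG)$ on the left, summing over $j\in\{2,3\}$, and using Cauchy--Schwarz/Young on the right, I obtain
\[
\sum_{j=2,3}\!\int_{\sP}\!\Big\{\tfrac{1}{16}(v\!\cdot\!\nabla\partial_jF)^2+\tfrac{1}{16}(v\!\cdot\!\nabla\partial_jG)^2\Big\}+\tfrac{c}{2}\|\partial_j(F,G)\|_{L^2(\sP)}^2\lesssim \|(\mu,\nu)\|_{H^1(\sP)}^2+\delta\|\nabla(F,G)\|_{L^2(\sP)}^2+O\bigl(\|H''(f,g)\|_{C^1(\overline\sP)}\bigr)\|(F,G)\|_{L^2(\sP)}\|\nabla(F,G)\|_{L^2(\sP)},
\]
where $c=\pi^2\min_{\overline\sP} v_1^2/(16L^2)$ and $\delta$ is as small as desired.

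The remaining step is to recover $\partial_1(F,G)$. Since $v_1$ does not vanish, the identity $v_1\partial_1 F=v\cdot\nabla F-v_2\partial_2 F-v_3\partial_3 F$ and the second bound of Proposition \ref{prop: first estimate} give $\|\partial_1(F,G)\|_{L^2(\sP)}^2\lesssim \|(\mu,\nu)\|_{L^2(\sP)}^2+\sum_{j=2,3}\|\partial_j(F,G)\|_{L^2(\sP)}^2$. Adding this to the previous inequality and choosing the neighborhood of $(\nabla\bar f,\nabla\bar g)$ small enough to absorb $\delta\|\nabla(F,G)\|_{L^2(\sP)}^2$ on the right, I obtain
\[
\|\nabla(F,G)\|_{L^2(\sP)}^2\lesssim \|(\mu,\nu)\|_{H^1(\sP)}^2+O\bigl(\|H''(f,g)\|_{C^1(\overline\sP)}\bigr)\|(F,G)\|_{L^2(\sP)}\|\nabla(F,G)\|_{L^2(\sP)}.
\]
A further Young inequality on the last term, followed by the bound $\|(F,G)\|_{L^2(\sP)}\lesssim \|(\mu,\nu)\|_{L^2(\sP)}$ from \eqref{eq: FG less munu}, yields \eqref{eq: FG1 less munu1}. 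Substituting this $H^1$-bound back into the displayed inequality for the tangential streamline derivatives gives the second claimed estimate.

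The main obstacle is not the commutator computation itself but the bookkeeping needed to keep every constant uniform in $\epsilon\in[0,1]$ and to track precisely the linear dependence on $\|H''(f,g)\|_{C^1(\overline\sP)}$: the coercivity lower bound controls only the tangential variables and the streamline derivative $v\cdot\nabla$, so $\partial_1(F,G)$ must be reconstructed algebraically, and the $H''$-dependent cross term must be split by Young's inequality in a way that still permits absorption. The hypothesis that $\nabla\bar f,\nabla\bar g$ are constant is crucial here: it makes the coefficient in \eqref{eq: r dependence} small and hence the commutator error absorbable, which is exactly what would fail for a general irrotational $\bar v$.
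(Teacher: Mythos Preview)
Your proposal is correct and follows essentially the same approach as the paper: test \eqref{weak linear problem} with $-(\partial_j^2F,\partial_j^2G)$, apply the commutator estimate of Theorem \ref{thm: operator A} with $r=1$ together with the coercivity bound \eqref{eq: Bfg is coercive} on $(\partial_jF,\partial_jG)$, recover $\partial_1(F,G)$ algebraically from $v\cdot\nabla(F,G)$ via Proposition \ref{prop: first estimate} and the nonvanishing of $v_1$, and close by Young's inequality and \eqref{eq: FG less munu}. The only cosmetic differences are the order in which you split the $H''$-cross term and absorb $\partial_1(F,G)$, and your slightly coarser (but equally effective) bound $\|(\nabla f,\nabla g)-(\nabla\bar f,\nabla\bar g)\|_{C^2}$ in place of the paper's $\|(\partial_j\nabla f,\partial_j\nabla g)\|_{C^1}$.
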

\begin{proof}
In Theorem \ref{thm: operator A}, we consider $r=1$.
Applying \eqref{weak linear problem} to $(\delta F,\delta G)
=-(\partial_j^2 F,\partial_j^2G)$ with $j\in\{2,3\}$ and using 
Proposition \ref{prop: first estimate}, we get
\begin{align*}
&
\int_{\sP}\Big\{ 
\frac 1 {16}(v\innpr \nabla \partial_jF)^2
+\frac 1{16}(v\innpr \nabla \partial_jG)^2
+
\frac {\pi^2\min_{\overline \sP}  v_1^2} {32L^2}
((\partial_jF)^2+(\partial_jG)^2)\Big\}\,dx\,dy\,dz
\\&
\leq
B_{(f,g)}^\epsilon((\partial_jF,\partial_jG),(\partial_j F,\partial_j G))\\
&= B_{(f,g)}^\epsilon((F,G),-(\partial_j^2 F,\partial_j^2 G))\\
&\qquad
+\Big\{B_{(f,g)}^\epsilon((\partial_jF,\partial_jG),(\partial_j F,\partial_j G))
-B_{(f,g)}^\epsilon((F,G),-(\partial_j^2 F,\partial_j^2 G))\Big\}
\\&
\stackrel{\mathclap{\eqref{weak linear problem}}}
{=}
\int_{\sP}(\partial_j\mu\partial_j F
+\partial_j\nu\partial_j G)\,dx\,dy\,dz
\\
&\qquad+\Big\{B_{(f,g)}^\epsilon((\partial_jF,\partial_jG),(\partial_j F,\partial_j G))-B_{(f,g)}^\epsilon((F,G),-(\partial_j^2 F,\partial_j^2 G))\Big\}
\\& \stackrel{
\eqref{eq: r dependence},\eqref{eq: sup L}
}\leq
\|(\partial_j \mu,\partial_j\nu)\|_{L^2(\sP)}
 \|(\partial_jF,\partial_jG)\|_{L^2(\sP)}
+O\Big(
\|(\partial_j\nabla f,\partial_j\nabla g)
\|_{C^1(\overline \sP)}
\Big)
\|(F,G)\|_{H^1(\sP)}^2\\
& \qquad+O\Big(
\|H''(f,g)\|_{C^1(\overline \sP)}
\Big)
\|(F,G)\|_{L^2(\sP)}
\|(\partial_j F,\partial_j G)\|_{L^2(\sP)}
\\& \leq
\|(\partial_j \mu,\partial_j\nu)\|_{L^2(\sP)}
 \|(\partial_jF,\partial_jG)\|_{L^2(\sP)}
+O\Big(
\|(\partial_j\nabla f,\partial_j\nabla g)
\|_{C^1(\overline \sP)}
\Big)
\|(F,G)\|_{H^1(\sP)}^2\\
& \qquad+\delta^{-1}O\Big(
\|H''(f,g)\|_{C^1(\overline \sP)}
\Big)^2
\|(F,G)\|_{L^2(\sP)}^2+
\delta \|(\partial_j F,\partial_j G)\|_{L^2(\sP)}^2
~.
\end{align*}

If, in addition, 
$$
\|(\partial_2\nabla f,\partial_3 \nabla f,\partial_2\nabla g,\partial_3 \nabla g)
\|_{C^1(\overline \sP)}< \delta
$$
and $\delta>0$ is small enough, we get 
(note that the coefficient $32$ is replaced by $64$, and later by $128$)
\begin{align*}
&
\sum_{j\in\{2,3\}}\int_{\sP}\Big\{ 
\frac 1 {16}(v\innpr \nabla \partial_jF)^2
+\frac 1{16}(v\innpr \nabla \partial_jG)^2
+
\frac {\pi^2\min_{\overline \sP}  v_1^2} {64L^2}
((\partial_jF)^2+(\partial_jG)^2)\Big\}\,dx\,dy\,dz
\\&\lesssim
\|(\mu, \nu)\|_{H^1(\sP)}^2
+\delta^{-1}(\|H''(f,g)\|_{C^1(\overline \sP)}+1)^2
\|(F,G)\|_{L^2(\sP)}^2
+\delta \|(\partial_1F,\partial_1G)\|_{L^2(\sP)}^2
\\&\stackrel{\mathclap{\eqref{eq: FG less munu}}}{\lesssim}
\|(\mu, \nu)\|_{H^1(\sP)}^2
+\delta^{-1}(\|H''(f,g)\|_{C^1(\overline \sP)}+1)^2
\|(\mu,\nu)\|_{L^2(\sP)}^2
+\delta \|(\partial_1F,\partial_1G)\|_{L^2(\sP)}^2.
\end{align*}

Using the last inequality
in Proposition \ref{prop: first estimate} 
to estimate $\|\partial_1F\|_{L^2(\sP)}^2$
and $\|\partial_1G\|_{L^2(\sP)}^2$
(using also the fact that the first component of $v$
never vanishes), we obtain
$$\|(\partial_1 F,\partial_1 G)\|^2_{L^2(\sP)}
=O\Big(\|(\mu,\nu,\partial_2F,\partial_2G,\partial_3F,\partial_3G)\|^2_{L^2(\sP)}\Big)$$
and
\begin{align*}
&\sum_{j\in\{2,3\}}\int_{\sP}\Big\{ 
\frac 1 {16}(v\innpr \nabla \partial_jF)^2
+\frac 1{16}(v\innpr \nabla \partial_jG)^2
\Big\}\,dx\,dy\,dz+
\frac {\pi^2\min_{\overline \sP}  v_1^2} {128L^2}
\|(\nabla F,\nabla G)\|^2_{L^2(\sP)}\\
&=
O\Big(\|H''(f,g)\|_{C^1(\overline{\sP})} +1 \Big)^2 \|(\mu,\nu)\|_{H^1(\sP)}^2.
\end{align*}
We get \eqref{eq: FG1 less munu1} by combining this with \eqref{eq: FG less munu}.
\end{proof}

By induction, we get the following theorem.
\begin{thm}
\label{thm: tame inverse}
Let $r\geq 1$ be an integer,
$(f,g)\in H^{r+4}_{loc}(D)$ (admissible)
be in some small enough neighborhood of $(\bar f,\bar g)$ in  
$H^5(\sP)$,
$H\in C^{2}(\RR^2)$ be admissible, $H''(f,g)\in C^{r}(\overline{\sP})$ and
$H''(f,g)$ be small enough in $C(\overline{\sP})$.
There exists a constant $C_r>0$ such that, if
\begin{equation}
\label{eq: with partial derivatives 2 and 3, recurrence}
\|(\partial_2\nabla f,\partial_3 \nabla f,\partial_2\nabla g,\partial_3 \nabla g)
\|_{C^1(\overline \sP)}
< C_r^{-1},
\end{equation}
then
\begin{equation}
\label{eq: tame inverse}
\begin{aligned}
&\sum_{j\in\{2,3\}}\int_{\sP}\Big\{ 
\frac 1 {16}(v\innpr \nabla \partial^r_jF)^2
+\frac 1{16}(v\innpr \nabla \partial^r_jG)^2
\Big\}\,dx\,dy\,dz+\|(F,G)\|_{H^r(\sP)}^2\\
&\leq 
C_r \|(\mu,\nu)\|_{H^r(\sP)}^2
+C_r\|(\mu,\nu)\|_{H^1(\sP)}^2
 \Big(\|(f,g)\|_{H^{r+4}(\sP)}
+\|H''(f,g)\|_{C^{r}(\overline \sP)}+1
\Big)^2
\end{aligned}
\end{equation}
for all periodic $(\mu,\nu)\in H^r(\sP)$ 
and all admissible $(F,G)\in H^{2r+1}_{loc}(D)$
satisfying \eqref{weak linear problem}.
\end{thm}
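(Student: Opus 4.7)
The plan is to prove \eqref{eq: tame inverse} by induction on $r$. The base case $r=1$ is precisely Proposition \ref{prop: second estimate}, once one uses Proposition \ref{prop: first estimate} together with the assumption that the first component of $v$ does not vanish to convert the control on $(v\cdot\nabla)(F,G)$ into $L^2$ control on $\partial_1 F$, $\partial_1 G$, thereby recovering the full $\|(F,G)\|_{H^1(\sP)}^2$ on the left. The right-hand side on the bound in \eqref{eq: FG1 less munu1} already has the correct tame shape since $\|(f,g)\|_{H^{5}(\sP)}$ is bounded by assumption and the only non-bounded factor $\|H''(f,g)\|_{C^{1}(\overline\sP)}$ already appears linearly.

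For the inductive step, fix $r\ge 2$ and assume \eqref{eq: tame inverse} holds for all $r'<r$. The natural test function in \eqref{weak linear problem} is $(\delta F,\delta G)=(-1)^r(\partial_j^{2r}F,\partial_j^{2r}G)$ for $j\in\{2,3\}$. Integration by parts in $y$ or $z$ converts the right-hand side into $\int_{\sP}(\partial_j^r\mu\,\partial_j^r F+\partial_j^r\nu\,\partial_j^r G)\,dx\,dy\,dz$, while Theorem \ref{thm: operator A} rewrites the left-hand side as $B^\epsilon_{(f,g)}((\partial_j^r F,\partial_j^r G),(\partial_j^r F,\partial_j^r G))$ minus the commutator sums $\sum_{\sS}$ and $\sum_{\widetilde\sS}$. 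Since $(\partial_j^r F,\partial_j^r G)$ is still an admissible pair (periodicity is preserved and the Dirichlet trace at $x\in\{0,L\}$ survives because we only differentiate in $y,z$), Proposition \ref{prop: first estimate} gives
\begin{equation*}
B^\epsilon_{(f,g)}((\partial_j^r F,\partial_j^r G),(\partial_j^r F,\partial_j^r G))\;\ge\;\int_{\sP}\Big\{\tfrac{1}{16}(v\cdot\nabla\partial_j^r F)^2+\tfrac{1}{16}(v\cdot\nabla\partial_j^r G)^2+\tfrac{\pi^2\min_{\overline\sP}v_1^2}{32L^2}((\partial_j^r F)^2+(\partial_j^r G)^2)\Big\}\,dx\,dy\,dz,
\end{equation*}
which yields the skeleton of the desired estimate once the commutators and the source term are handled.

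The main technical work is estimating the commutator terms in a tame fashion. For the family indexed by $\sS$, the critical term is the one with $s_p=t_p=r-1$: by \eqref{eq: r dependence} its coefficient is bounded by $O(r^2)\|(\partial_j\nabla f,\partial_j\nabla g)\|_{C^1(\overline\sP)}$, so the smallness condition \eqref{eq: with partial derivatives 2 and 3, recurrence} (with $C_r$ chosen large enough) makes it absorbable into $\|(F,G)\|_{H^r(\sP)}^2$ on the left. For the remaining indices with $s_p+t_p\le 2r-3$, I apply Cauchy--Schwarz in $L^2(\sP)$, then use \eqref{eq: sup L} with Sobolev embedding $C^{2r-s_p-t_p}\hookleftarrow H^{2r-s_p-t_p+2}\subset H^{r+4}$ (since $2r-s_p-t_p\le r+2$) to bound $\|\partial_j^{2r-s_p-t_p}L_p\|_{C(\overline\sP)}\lesssim\|(f,g)\|_{H^{r+4}(\sP)}+1$; the remaining product $\|\partial_j^{s_p}u_p\|_{L^2}\|\partial_j^{t_p}v_p\|_{L^2}$ involves at most $r$ derivatives of $(F,G)$, and by the inductive hypothesis at rank $t_p+1\le r$ (together with interpolation between $\|(F,G)\|_{H^1}$ and $\|(F,G)\|_{H^{r}}$) is bounded by $\|(\mu,\nu)\|_{H^1(\sP)}$ times polynomial factors of the same tame shape. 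The family indexed by $\widetilde\sS$ is handled identically, the coefficient $\|\partial_j^{r-\widetilde s_p}\widetilde L_p\|_{C(\overline\sP)}$ being estimated by \eqref{eq: sup L} in terms of $\|H''(f,g)\|_{C^r(\overline\sP)}$. Young's inequality with a small parameter $\delta$ then absorbs all $\|(F,G)\|_{H^r}^2$ contributions into the left.

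Once the pure $\partial_j^r$-estimates are established for $j\in\{2,3\}$, the transverse $x$-derivatives are recovered iteratively from Proposition \ref{prop: solve second partial in x}: differentiating the algebraic expression for $\partial_{11}^2F$, $\partial_{11}^2G$ in the $(y,z)$ directions $(r-2)$ times and applying the Leibniz rule trades each pair of $x$-derivatives against $(\mu,\nu)$-derivatives and tangential derivatives of $(F,G)$, the coefficient norms being controlled by $\|(f,g)\|_{C^{r+1}(\overline\sP)}+\|H''(f,g)\|_{C^{r-1}(\overline\sP)}\lesssim\|(f,g)\|_{H^{r+4}(\sP)}+\|H''(f,g)\|_{C^{r}(\overline\sP)}$. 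Summing over all multi-indices of order $\le r$ gives \eqref{eq: tame inverse}. The principal obstacle is keeping the high-Sobolev factor $\|(f,g)\|_{H^{r+4}(\sP)}$ and the high-$C^k$ factor $\|H''(f,g)\|_{C^{r}(\overline\sP)}$ linear (i.e., appearing only in the quadratic factor on the right-hand side) rather than accumulating powers of them; this is precisely what the commutator structure in Theorem \ref{thm: operator A} was set up to allow, via the gain $s_p,t_p\le r-1$, and what dictates the somewhat generous choice of the exponent $r+4$.
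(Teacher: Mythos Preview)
Your overall strategy matches the paper's---induction on $r$, test with $(-1)^r\partial_j^{2r}(F,G)$, invoke Theorem \ref{thm: operator A} for the commutator, then recover $x$-derivatives via Proposition \ref{prop: solve second partial in x}. But there is a genuine gap in your treatment of the non-critical commutator terms, and it is exactly the place where tameness is decided.

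You bound the coefficient $\|\partial_j^{2r-s_p-t_p}L_p\|_{C(\overline\sP)}$ \emph{immediately} by $\|(f,g)\|_{H^{r+4}(\sP)}+1$ and then interpolate the $(F,G)$ factors between $H^1$ and $H^r$. Writing $k_1=2r-s_p-t_p$, this gives after Young's inequality a term of the form
\[
\delta\,\|(F,G)\|_{H^r}^2+C(\delta)\bigl(\|(f,g)\|_{H^{r+4}}+1\bigr)^{\frac{2(r-1)}{k_1-2}}\|(F,G)\|_{H^1}^2,
\]
and the exponent $\tfrac{2(r-1)}{k_1-2}$ exceeds $2$ whenever $k_1<r+1$, which occurs as soon as $r\ge 3$. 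The high norm of $(f,g)$ then enters with a power strictly larger than $2$, and the resulting estimate is not of the form \eqref{eq: tame inverse}. Invoking the inductive hypothesis on $\|(F,G)\|_{H^{k_j+1}}$ afterwards only makes things worse, since it introduces further high-norm factors of $(f,g)$.

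The paper's cure is a \emph{second} interpolation. It first keeps the coefficient at its natural intermediate level $\|(f,g)\|_{H^{k_1+3}}$ (from \eqref{eq: sup L}), interpolates the $(F,G)$ factors and applies Young as above, and only \emph{then} interpolates
\[
k_1+3=\frac{r+1-k_1}{r-1}\cdot 5+\frac{k_1-2}{r-1}\cdot(r+4),
\]
so that the exponent on $\|(f,g)\|_{H^{r+4}}$ becomes $\tfrac{k_1-2}{r-1}\cdot\tfrac{2(r-1)}{k_1-2}=2$ exactly, while the remaining power lands on the bounded quantity $\|(f,g)\|_{H^5}$. The constraint $k_1+k_2+k_3=2r$ is what makes these exponents cancel; by passing to $H^{r+4}$ at the outset you discard this structure. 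The $\widetilde\sS$ terms require the same care, interpolating $\|H''(f,g)\|_{C^{r-\widetilde s_p}}$ between $C^0$ and $C^r$.

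A smaller point: your recovery of $x$-derivatives skips what the paper isolates as a separate ``second step'', namely controlling $\partial_1\partial_j^{r-1}(F,G)$ from the term $\int (v\cdot\nabla\partial_j^{r-1}F)^2$ supplied by the inductive hypothesis at level $r-1$ (using $v_1\ne 0$). Without this bridge, the bootstrap from Proposition \ref{prop: solve second partial in x}---which must be differentiated in \emph{all} directions, not only in $(y,z)$---does not close down to the pure-tangential estimates of the first step.
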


\begin{rems}$ $
\begin{itemize}
\item In \eqref{eq: with partial derivatives 2 and 3, recurrence},
all terms in the norm are differentiated at least once with respect to 
$y$ or $z$. 
In the first sentence of the statement,
the small neighborhood and the small bound on the size of $H''(f,g)$
in $C(\overline{\sP})$
are independent 
of $r\geq 1$. The constant $C_r$ can depend on them, on $r$, 
$\overline f$ and
$\overline g$, but not on $H$, $f$ and $g$.  

\item The $r$ dependence in \eqref{eq: with partial derivatives 2 and 3, recurrence}
is due to the appearance of $r$ in the estimate \eqref{eq: r dependence} in Theorem \ref{thm: operator A} (see also \eqref{eq: r dependence 2} below).

\item Unlike Theorem \ref{thm: quadratic part} where the constancy of $\bar v$ was not essential it really does matter here (see \eqref{eq: with partial derivatives 2 and 3, recurrence}).
\end{itemize}
\end{rems}

\begin{proof}
As the result is already known for $r=1$ (see Proposition
\ref{prop: second estimate})
let us assume that $r\geq 2$.

\noindent
{\bf First step.}
We first bound from above
$$\int_{\sP}\Big\{ 
\frac 1 {16}(v\innpr \nabla \partial^r_jF)^2
+\frac 1{16}(v\innpr \nabla \partial^r_jG)^2
+
\frac {\pi^2\min_{\overline \sP}  v_1^2} {32L^2}
((\partial_j^rF)^2+(\partial_j^rG)^2)\Big\}\,dx\,dy\,dz
$$
for $j\in\{2,3\}$.
We shall deal with $\partial_1^r F$ and $\partial_1^r G$ in the third
and fourth steps.
Applying \eqref{weak linear problem} to $(\delta F,\delta G)
=(-1)^r(\partial_j^{2r} F,\partial_j^{2r}G)$ with $j\in\{2,3\}$, and using Proposition \ref{prop: first estimate} we get
\begin{align*}
&
\int_{\sP}\Big\{ 
\frac 1 {16}(v\innpr \nabla \partial^r_jF)^2
+\frac 1{16}(v\innpr \nabla \partial^r_jG)^2+
\frac {\pi^2\min_{\overline \sP}  v_1^2} {32L^2}
((\partial_j^rF)^2+(\partial_j^rG)^2)\Big\}\,dx\,dy\,dz
\\
&\leq
B_{(f,g)}^\epsilon((\partial_j^rF,\partial_j^rG),(\partial_j^r F,\partial_j^r G))
\\&
=B_{(f,g)}^\epsilon((F,G),(-1)^r(\partial_j^{2r} F,\partial_j^{2r} G))
\\&\qquad+\Big\{B_{(f,g)}^\epsilon((\partial_j^rF,\partial_j^rG),(\partial_j^r F,\partial_j^r G))
-B_{(f,g)}^\epsilon((F,G),(-1)^r(\partial_j^{2r} F,\partial_j^{2r} G))\Big\}
\\&\stackrel{\mathclap{\eqref{weak linear problem}}}=
\int_{\sP}(\partial_j^r\mu\partial_j^r F
+\partial_j^r\nu\partial_j^rG)\,dx\,dy\,dz
\\&\qquad+\Big\{B_{(f,g)}^\epsilon((\partial_j^rF,\partial_j^rG),(\partial_j^r F,\partial_j^r G))
-B_{(f,g)}^\epsilon((F,G),(-1)^r(\partial_j^{2r} F,\partial_j^{2r} G))\Big\}.
\end{align*}
By Theorem \ref{thm: operator A},
\begin{equation}
\label{eq: formula with sum} 
\begin{aligned}
&\int_{\sP}\Big\{ 
\frac 1 {16}(v\innpr \nabla \partial^r_jF)^2
+\frac 1{16}(v\innpr \nabla \partial^r_jG)^2+
\frac {\pi^2\min_{\overline \sP}  v_1^2} {32L^2}
((\partial_j^rF)^2+(\partial_j^rG)^2)\Big\}\,dx\,dy\,dz
\\
&\leq
\|(\partial_j^r \mu,\partial_j^r\nu)\|_{L^2(\sP)}
 \|(\partial_j^rF,\partial_j^rG)\|_{L^2(\sP)}\\
 &\qquad
+O(r^2)
\|(\partial_j\nabla f,\partial_j\nabla g)\|_{C^1(\overline \sP)}
\|(F,G)\|_{H^r(\sP)}^2
\\
&\qquad+\sum O\Big(\|(f,g)\|
_{H^{k_1+3}(\sP)}+1\Big) \|(F,G)\|_{H^{k_2+1}(\sP)}
\|(F,G)\|_{H^{k_3+1}(\sP)}\\
&\qquad 
+\sum O\Big( \|H''(f,g)\|_{C^{r-k_4}(\overline{\sP})}\Big)
\|(F,G)\|_{H^{k_4}(\sP)}
\|(F,G)\|_{H^{r}(\sP)}
\end{aligned}
\end{equation}
where the sums are over all integers $k_1,k_2,k_3\geq 0$ such that
$$k_1+k_2+k_3=2r,\ k_1\leq r+1,\ k_2\leq k_3\leq r-1,\, 
k_2+k_3< 2r-2$$
(this implies $k_1>2$ and, as $r\geq 2$,  $k_2+k_3>0$)
and $0\le k_4\le r-1$. Here and in the following estimates, we only indicate the $r$ dependence in the coefficients 
of $\|(F,G)\|_{H^r(\sP)}$. We don't keep track of the $r$ dependence of the lower order terms.

By standard interpolation in Sobolev spaces based on the equality
$k_j+1=\frac{r-1-k_j}{r-1}\cdot 1+\frac{k_j}{r-1}\cdot r$, $j=2,3$,
(see e.g.~section 4.3 in \cite{Han-Hong}), 
the first sum can be estimated by
\begin{eqnarray*}
&&\sum O\Big(\|(f,g)\|
_{H^{k_1+3}(\sP)}
+1\Big)
\|(F,G)\|_{H^{1}(\sP)}^{\frac{k_1-2}{r-1}}
\|(F,G)\|_{H^{r}(\sP)}^{\frac{2r-k_1}{r-1}}
\\&&=\sum\left\{  O\Big(\|(f,g)\|
_{H^{k_1+3}(\sP)}
+1\Big)^{\frac{2(r-1)}{k_1-2}}\delta^{\,-\frac{2r-k_1}{k_1-2}}\|(F,G)\|^2_{H^{1}(\sP)}\right\}^{\frac{k_1-2}{2(r-1)}}
\left\{\delta\|(F,G)\|^2_{H^{r}(\sP)}\right\}^{\frac{2r-k_1}{2(r-1)}},
\end{eqnarray*}
where $\delta>0$ will be chosen as small as needed.
The choice of $\delta>0$ can depend on $r$, 
$\overline f$ and
$\overline g$, but not on $(F,G)$, $(\mu,\nu)$, $H$, $f$ and $g$.  
In what follows, we write explicitly some negative powers of $\delta$,
even when they can be merged with other positive factors, for example
those referred to in the notation $\lesssim$
(possibly depending on $r$, 
$\overline f$ and $\overline g$).  
By Young's inequality for products, $xy\le p^{-1}x^p+q^{-1}y^q$ with $p=2(r-1)/(k_1-2)$, $q=2(r-1)/(2r-k_1)$,
and interpolation based on the equality
\[
k_1+3=\frac{r+1-k_1}{r-1}\cdot 5+\frac{k_1-2}{r-1}\cdot (r+4),
\]
this can in turn be estimated by
\begin{align*}
& \delta \|(F,G)\|_{H^{r}(\sP)}^{2}
+\sum 
\delta^{\,-\frac{2r-k_1}{k_1-2}}
O\Big(\|(f,g)\|
_{H^{k_1+3}(\sP)}
+1\Big)^{\frac{2(r-1)}{k_1-2}}
\|(F,G)\|_{H^{1}(\sP)}^{2}\\
&\lesssim 
\delta\|(F,G)\|_{H^{r}(\sP)}^{2}+
\sum
\delta^{\,-\frac{2r-k_1}{k_1-2}}
 \Big(\|(f,g)\|_{H^{5}(\sP)}
+1\Big)^{\frac{2(r+1-k_1)}{k_1-2}} \Big(\|(f,g)\|_{H^{r+4}(\sP)}
+1\Big)^{2}
\|(F,G)\|_{H^{1}(\sP)}^{2}.
\end{align*}
By Proposition \ref{prop: second estimate}, the sum is thus estimated above:
\begin{equation}
\begin{aligned}
\label{eq: estimate sum}
&\sum \Big(\|(f,g)\|
_{H^{k_1+3}(\sP)}+1\Big)\|(F,G)\|_{H^{k_2+1}(\sP)}
\|(F,G)\|_{H^{k_3+1}(\sP)}
\\
&\lesssim 
\delta^{-2r}
\Big(\|(f,g)\|
_{H^{r+4}(\sP)}+1\Big)^2
\|(\mu,\nu)\|_{H^{1}(\sP)}^{2}
+\delta \|(F,G)\|_{H^{r}(\sP)}^{2}~.
\end{aligned}
\end{equation}
We have also used that, by assumption, $(f,g)$
is in some small enough neighborhood of $(\bar f,\bar g)$ in $H^5(\sP)$.

The second sum can similarly be estimated as follows: 
\begin{equation}
\begin{aligned}
\label{eq: estimate sum 2}
&\sum  \|H''(f,g)\|_{C^{r-k_4}(\overline{\sP})}
\|(F,G)\|_{H^{k_4}(\sP)}
\|(F,G)\|_{H^{r}(\sP)}
\\
&\lesssim \sum \|H''(f,g)\|_{C(\overline{\sP})}^{\frac{k_4}{r}} \|H''(f,g)\|_{C^r(\overline{\sP})}^{\frac{r-k_4}{r}} 
\|(F,G)\|_{L^2(\sP)}^{\frac{r-k_4}{r}} \|(F,G)\|_{H^r(\sP)}^{\frac{r+k_4}{r}}
\\
&\stackrel{\mathclap{\eqref{eq: FG less munu}}}{\lesssim}
\delta^{-2r}
\|H''(f,g)\|
_{C^r(\overline{\sP})}^2
\|(\mu,\nu)\|_{L^2(\sP)}^{2}
+\delta \|(F,G)\|_{H^{r}(\sP)}^{2}~.
\end{aligned}
\end{equation}

Let us now choose
\begin{equation}
\label{eq: r dependence 2}
\|(\partial_2\nabla f,\partial_3 \nabla f,\partial_2\nabla g,\partial_3\nabla g)
\|_{C^1(\overline \sP)}
<r^{-2}\delta.
\end{equation}
If $\delta$ is small enough (this is allowed by assumption \eqref{eq: with partial derivatives 2 and 3, recurrence}),
then, by \eqref{eq: formula with sum}--\eqref{eq: estimate sum 2}
(note that the coefficient $32$ is replaced by $64$),
\begin{align*}
&
\sum_{j\in\{2,3\}}\int_{\sP}\Big\{ 
\frac 1 {16}(v\innpr \nabla \partial^r_jF)^2
+\frac 1{16}(v\innpr \nabla \partial^r_jG)^2+
\frac {\pi^2\min_{\overline \sP}  v_1^2} {64L^2}
((\partial_j^rF)^2+(\partial_j^rG)^2)\Big\}\,dx\,dy\,dz\\
&\qquad +\|(F,G)\|^2_{L^2(\sP)}\\
&\lesssim
\|(F,G)\|^2_{L^2(\sP)}+
\delta^{-1}\|(\mu,\nu)\|^2_{H^r(\sP)}
+\delta\|(F,G,\partial_1 F,\partial_1G)\|_{H^{r-1}(\sP)}^{2}
\\
&\qquad +\delta^{-2r}
\Big(\|(f,g)\|
_{H^{r+4}(\sP)}
+\|H''(f,g)\|_{C^r(\overline{\sP})}+1\Big)^2
\|(\mu,\nu)\|_{H^{1}(\sP)}^{2}
\end{align*}
because, for $\widehat r =r$, 
\begin{equation}
\label{eq: equivalent norms}
\sum_{|\alpha_2|+|\alpha_3|\leq \widehat r} \|(\partial^\alpha F,\partial^\alpha
G)\|_{L^2(\sP)}^2 
\lesssim
\|(F,G)\|_{L^2(\sP)}^2+\sum_{j\in\{2,3\}}
\|(\partial_j^{\widehat r}F,\partial_j^{\widehat r}G))\|_{L^2(\sP)}^2,
\end{equation}
where the sum is over all multi-indices $\alpha=(\alpha_2,\alpha_3)\in\NN_0^2$
such that $|\alpha_2|+|\alpha_3|\leq \widehat r$ and $\partial^\alpha$ 
is the corresponding partial derivative with respect to the variables $(y,z)$.
Thanks to the induction hypothesis
\begin{multline}
\label{eq: consequence of induction}
\sum_{j\in\{2,3\}}\int_{\sP}\Big\{ 
\frac 1 {16}(v\innpr \nabla \partial^r_jF)^2
+\frac 1{16}(v\innpr \nabla \partial^r_jG)^2
+
\frac {\pi^2\min_{\overline \sP}  v_1^2} {64L^2}
((\partial_j^rF)^2+(\partial_j^rG)^2)\Big\}\,dx\,dy\,dz
+\|(F,G)\|^2_{L^2(\sP)}
\\\lesssim 
\delta^{\,-1}\|(\mu,\nu)\|^2_{H^r(\sP)}
+\delta\|(\partial_1 F,\partial_1G)\|_{H^{r-1}(\sP)}^{2}
\\+\delta^{\,-2r}
\Big(\|(f,g)\|
_{H^{r+4}(\sP)}
+\|H''(f,g)\|_{C^r(\overline{\sP})}+1\Big)^2
\|(\mu,\nu)\|_{H^{1}(\sP)}^{2}.
\end{multline}

\noindent{\bf Second step.}
Let us now deal with the terms containing only one
partial derivative with respect to $x$ and $r-1$  partial derivatives
with respect to $y$ or $z$.
By induction, we know that
\begin{align*}
&\sum_{j\in\{2,3\}}\int_{\sP}\Big\{ 
\frac 1 {16}(v\innpr \nabla \partial^{r-1}_jF)^2
+\frac 1{16}(v\innpr \nabla \partial^{r-1}_jG)^2
\Big\}\,dx\,dy\,dz+\|(F,G)\|_{H^{r-1}(\sP)}^2\\
&
 \leq 
C_{r-1} \|(\mu,\nu)\|_{H^{r-1}(\sP)}^2
+C_{r-1}\|(\mu,\nu)\|_{H^1(\sP)}^2
 \Big(\|(f,g)\|_{H^{r+3}(\sP)}
+\|H''(f,g)\|_{C^{r-1}(\overline{\sP})}+1
\Big)^2
\end{align*}
and thus
\begin{align*}
&\sum_{j\in\{2,3\}} \|(\partial_1\partial^{r-1}_j F,\partial_1\partial^{r-1}_j G)\|_{L^2(\sP)}^2
\\
&\lesssim  \sum_{j\in\{2,3\}}\|(\partial_2\partial^{r-1}_j F, \partial_2\partial^{r-1}_j G, \partial_3\partial^{r-1}_j F, \partial_3\partial^{r-1}_j G)\|_{L^2(\sP)}^2\\
&
\quad+ \|(\mu,\nu)\|_{H^{r-1}(\sP)}^2+\|(\mu,\nu)\|_{H^1(\sP)}^2
 \Big(\|(f,g)\|_{H^{r+3}(\sP)}+\|H''(f,g)\|_{C^{r-1}(\overline{\sP})}+1
\Big)^2
\end{align*}
because the first component of $v$ never vanishes.
Together with the first step
and thanks to \eqref{eq: equivalent norms} with $\widehat r=r$, this gives
\begin{align*}
&\|(F,G)\|_{L^2(\sP)}^2
+\sum_{j\in\{2,3\}} \|(\partial_1\partial^{r-1}_j F, \partial_1\partial^{r-1}_j G)\|_{L^2(\sP)}^2
\\&\lesssim
\delta^{\, -1}\|(\mu,\nu)\|^2_{H^r(\sP)}
+\delta\|(\partial_1 F,\partial_1G)\|_{H^{r-1}(\sP)}^{2}
\\&\quad
+\delta^{\,-2r}\Big(\|(f,g)\|
_{H^{r+4}(\sP)}
+\|H''(f,g)\|_{C^{r}(\overline{\sP})}+1\Big)^2
\|(\mu,\nu)\|_{H^{1}(\sP)}^{2}\,.
\end{align*}
Applying \eqref{eq: equivalent norms} to $\widehat r=r-1$ and
to $(\partial_1 F,\partial_1 G)$, we obtain for small enough 
$\delta$
\begin{equation}
\label{eq: second step inequality}
\begin{aligned}
&\|(F,G)\|_{L^2(\sP)}^2
+\|(\partial_1F,\partial_1G)\|_{L^2(\sP)}^2
+\sum_{j\in\{2,3\}}\|(\partial^{r-1}_j\partial_1 F, \partial^{r-1}_j\partial_1 G)\|_{L^2(\sP)}^2
\\&\lesssim 
\delta^{\, -1}\|(\mu,\nu)\|^2_{H^r(\sP)}
+\delta\|(\partial_1^2 F,\partial_1^2G)\|_{H^{r-2}(\sP)}^{2}
\\&\quad
+\delta^{\, -2r}
\Big(\|(f,g)\|
_{H^{r+4}(\sP)}
+\|H''(f,g)\|_{C^{r}(\overline{\sP})}+1\Big)^2
\|(\mu,\nu)\|_{H^{1}(\sP)}^{2}\,.
\end{aligned}
\end{equation}

\noindent {\bf Third step.}
We now deal with partial derivatives in which
$F$ and $G$ are differentiated at least twice with respect to $x$. 
We estimate these using induction on the number of partial derivatives with respect to $x$ for a fixed $r$.
In the special case $r=2$ 
there is only one second order partial derivative to estimate, and we simply note directly using Proposition \ref{prop: solve second partial in x} that
\begin{align*}
\|(\partial_1^2 F, \partial_1^2 G)\|_{L^{2}(\sP)}
&
\lesssim 
\|(\mu,\nu)\|_{L^2(\sP)}
+\|(\partial_2\nabla F,\partial_2\nabla G, \partial_3\nabla F,\partial_3\nabla G)\|_{L^{2}(\sP)}
+\|(F,G)\|_{H^{1}(\sP)}
\\&\stackrel{\mathclap{\eqref{eq: FG1 less munu1}}}\lesssim 
(\|H''(f,g)\|_{C^{1}(\overline \sP)}+1)
\|(\mu,\nu)\|_{H^1(\sP)}
+\|(\partial_2\nabla F,\partial_2\nabla G,\partial_3\nabla F,\partial_3\nabla G)\|_{L^{2}(\sP)}.
\end{align*}

Next, let $r>2$ and $B_s$ be a differential operator of order $r-2$ in $(x,y,z)$
that consists of an iteration of $r-2$ partial derivatives,
exactly $s$ of which are with respect to $x$ ($0\leq s\leq r-2$).
Differentiating $r-2$ times the expressions for $\partial_1^2 F$ and
$\partial_1^2 G$  in Proposition \ref{prop: solve second partial in x}, we get
\begin{align*}
\|(B_s\partial_1^2 F, B_s\partial_1^2 G)\|_{L^{2}(\sP)}
&\lesssim
\sum_{k=0}^{r-2} (\|(f,g)\|_{H^{r+1-k}(\sP)}+1) \|(\mu, \nu)\|_{H^k(\sP)}
\\&\quad 
+\sum_{k=0}^{r-2} (\|(f,g)\|_{H^{r+2-k}(\sP)}+1) \|(F,G)\|_{H^{k+1}(\sP)}\\
 &\quad 
+\sum_{k=0}^{r-2} \|H''\|_{C^{r-2-k}(\overline{\sP})} \|(F,G)\|_{H^k(\sP)}\\
&\quad +
\Big(\|(f,g)\|_{H^3(\sP)} +1\Big)
\Big(\|D_{s}(\partial_2F,\partial_2G)\|_{L^2(\sP)}+\|E_{s}(\partial_3F,\partial_3G)\|_{L^{2}(\sP)}\Big)
\end{align*}
where $D_{s}$ and $E_{s}$ are matricial differential operators
of order $r-1$ in $(x,y,z)$, but at most of order $s+1$ when seen
as differential operators in $x$ (their coefficients being constants).
The terms involving $E_s$ and $D_s$ come from applying $B_s$ to the terms
in Proposition \ref{prop: solve second partial in x}
involving $\partial^2_{\alpha \beta}F$ or $\partial^2_{\alpha \beta}G$ 
with $(\alpha,\beta)\neq (1,1)$.
The last inequality allows one to estimate  differential expressions
of order $s+2$ with respect to $x$ by differential expressions
of orders at most $s+1$ with respect to $x$.

We get again by interpolation and Young's inequality
\begin{align*}
&
\|(B_s\partial_1^2 F, B_s\partial_1^2 G)\|_{L^{2}(\sP)}
\\&\lesssim 
(\|(f,g)\|_{H^{r+1}(\sP)}+1) \|(\mu, \nu)\|_{L^2(\sP)} + (\|(f,g)\|_{H^{3}(\sP)}+1) \|(\mu, \nu)\|_{H^{r-2}(\sP)}
\\&\qquad+
(\|(f,g)\|_{H^{r+3}(\sP)}+1) \|(F,G)\|_{L^2(\sP)} +(\|(f,g)\|_{H^{4}(\sP)}+1) \|(F,G)\|_{H^{r-1}(\sP)} 
\\&\qquad+
\|H''(f,g)\|_{C^{r-2}(\overline{\sP})} \|(F,G)\|_{L^2(\sP)} +\|H''(f,g)\|_{C(\overline{\sP})} \|(F,G)\|_{H^{r-2}(\sP)}
\\
&\qquad +\Big(\|(f,g)\|_{H^3(\sP)} +1\Big)
\Big(\|D_{s}(\partial_2F,\partial_2G)\|_{L^2(\sP)}+\|E_{s}(\partial_3F,\partial_3G)\|_{L^{2}(\sP)}\Big)\\
&\lesssim 
\Big(\|(f,g)\|_{H^{r+3}(\sP)}+\|H''(f,g)\|_{C^{r-2}(\overline{\sP})}+1
\Big)
\|(\mu,\nu)\|_{L^{2}(\sP)}
\\&\quad
+\|(\mu,\nu)\|_{H^{r-2}(\sP)}
+\|(F,G)\|_{H^{r-1}(\sP)}+\|D_{s}(\partial_2F,\partial_2G)\|_{L^2(\sP)}
+\|E_{s}(\partial_3F,\partial_3G)\|_{L^{2}(\sP)}
\\&\lesssim 
 \Big(\|(f,g)\|_{H^{r+3}(\sP)}
+\|H''(f,g)\|_{C^{r-1}(\overline{\sP})}+1
\Big)
\|(\mu,\nu)\|_{H^{1}(\sP)}
\\&\quad
+\|(\mu,\nu)\|_{H^{r-1}(\sP)}
+\|D_{s}(\partial_2F,\partial_2G)\|_{L^2(\sP)}+\|E_{s}(\partial_3F,\partial_3G)\|_{L^{2}(\sP)},
\end{align*}
where we've used the induction hypothesis \eqref{eq: tame inverse} with $r$ replaced by $r-1$ in the last step.
By induction on $s$, we get the estimate
\begin{align*}
\|(B_s\partial_1^2 F, B_s\partial_1^2 G)\|_{L^{2}(\sP)}
&\lesssim 
 \Big(\|(f,g)\|_{H^{r+4}(\sP)}
+\|H''(f,g)\|_{C^{r}(\overline{\sP})}+1
\Big)
\|(\mu,\nu)\|_{H^{1}(\sP)}
\\
&\quad
+\|(\mu,\nu)\|_{H^{r}(\sP)}
+\sum_{j\in\{2,3\}}\|(\partial^{r-1}_j\partial_1 F, \partial^{r-1}_j\partial_1 G)\|_{L^2(\sP)}\\
&\quad \quad+\delta \|(\partial_1^2 F, \partial_1^2 G)\|_{H^{r-2}(\sP)},
\end{align*}
thanks to \eqref{eq: equivalent norms} applied to $(F,G)$ and $(\partial_1F,\partial_1G)$, and to \eqref{eq: consequence of induction}.
Hence, choosing $\delta$ sufficiently small
\begin{equation}
\label{eq: third step inequality}
\begin{aligned}
\|(\partial_1^2 F, \partial_1^2 G)\|_{H^{r-2}(\sP)}
&\lesssim  
\Big(\|(f,g)\|_{H^{r+4}(\sP)}
+\|H''(f,g)\|_{C^{r}(\overline{\sP})}+1
\Big)\|(\mu,\nu)\|_{H^{1}(\sP)}
\\&
\quad
+\|(\mu,\nu)\|_{H^{r}(\sP)}
+\sum_{j\in\{2,3\}}\|(\partial^{r-1}_j\partial_1 F, \partial^{r-1}_j\partial_1 G)\|_{L^2(\sP)}.
\end{aligned}
\end{equation}
Combining \eqref{eq: third step inequality} with \eqref{eq: second step inequality} and again choosing $\delta$ sufficiently small allows us to estimate all partial derivatives of order $r$ with precisely one derivative with respect to $x$.
Substitution of the resulting estimate into \eqref{eq: third step inequality} gives us control of all derivatives with at least two derivatives with respect to $x$.

\noindent{\bf Conclusion.}
The estimate of the statement follows from the three steps.
\end{proof}

Let us deal with the case $\epsilon=0$ with the help of
the technique of elliptic regularization  introduced and well explained
in \cite{KoNi}, see e.g p.\ 449, the beginning of the proof of Theorem 2
and the proof of Theorem 2' in that work.
Firstly, when $\epsilon>0$, one deduces from this a priori estimate
the existence of an admissible solution $(F,G)\in H^{r}(\sP)$ given any
$(\mu,\nu)\in H^r(\sP)$,
by approximating $(f,g)$, $H''(f,g)$ itself and $(\mu,\nu)$
by smooth functions.
The existence of $(F,G)$ is ensured because the problem is elliptic in this case.
Secondly, as the above estimate holds uniformly in 
$\epsilon\in(0,1]$, 
the existence persists when taking the limit $\epsilon\rightarrow 0$.
Thus we get the following theorem.

\begin{thm}
\label{thm: general estimate}
Let $\epsilon=0$,
$r\geq 1$ be an integer,
$(f,g)\in H^{r+4}_{loc}(D)$ (admissible)
be in some small enough neighborhood of $(\bar f,\bar g)$ in  
$H^5(\sP)$,
$H\in C^{2}(\RR^2)$ be admissible, 
$H''(f,g)\in C^{r}(\overline{D})$ and
$H''(f,g)$ be small enough in $C(\overline{\sP})$.
There exists a constant $C_r>0$ such that if
$$
\|(\partial_2\nabla f,\partial_3 \nabla f,\partial_2\nabla g,\partial_3 \nabla g)
\|_{C^1(\overline \sP)}
<C_r^{-1},
$$
then for any periodic $(\mu,\nu)\in H^r_{loc}(D)$ there exists  
an admissible $(F,G)\in H^{r}_{loc}(D)$ satisfying
\eqref{weak linear problem} (with $\epsilon=0$) and
\begin{equation*}
\|(F,G)\|_{H^r(\sP)}^2
 \leq 
C_r \|(\mu,\nu)\|_{H^r(\sP)}^2
+C_r\|(\mu,\nu)\|_{H^1(\sP)}^2
 \Big(\|(f,g)\|_{H^{r+4}(\sP)}
+\|H''(f,g)\|_{C^{r}(\overline{\sP})}
+1\Big)^2~.
\end{equation*}
\end{thm}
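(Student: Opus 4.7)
My plan is to follow exactly the elliptic regularization scheme alluded to in the paragraph preceding the statement (as in Kohn--Nirenberg). First I would establish the claim for $\epsilon\in(0,1]$ and then pass to the limit $\epsilon\to 0$ using the uniformity of the estimates in Theorem~\ref{thm: tame inverse}.

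\textbf{Existence for $\epsilon>0$ and smooth data.} Fix $\epsilon\in(0,1]$. The regularized bilinear form $B_{(f,g)}^\epsilon$ is, under the hypotheses of Proposition~\ref{prop: first estimate}, positive definite and in addition its quadratic part dominates $\tfrac{\epsilon}{2}\int_{\sP}(|\nabla F|^2+|\nabla G|^2)\,dx\,dy\,dz$. Combined with~\eqref{eq: FG less munu} it is coercive on the Hilbert space $V$ of $(F,G)\in H^1_{loc}(D)$ satisfying (Ad'1)--(Ad'3); hence by Lax--Milgram, for every periodic $(\mu,\nu)\in L^2_{loc}(D)$ there exists a unique admissible $(F,G)\in V$ solving~\eqref{weak linear problem}. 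If we further assume that $(f,g)$, $H''(f,g)$ and $(\mu,\nu)$ are smooth, the system~\eqref{eq: equivalent system} is a linear elliptic system (with respect to the Douglis--Nirenberg calculus), the principal part arising from the $\epsilon$-Laplacian together with the coercive lower bound of Proposition~\ref{prop: first estimate}. Standard interior and up-to-the-boundary elliptic regularity, combined with the Dirichlet conditions on $\{0,L\}\times\RR^2$ and periodicity in $(y,z)$, then yields $(F,G)\in C^\infty(\overline D)$. In particular, $(F,G)\in H^{2r+1}_{loc}(D)$ so that all the calculations in Theorem~\ref{thm: tame inverse} are justified.

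\textbf{Uniform estimate and passage to $\epsilon\to 0$ for smooth data.} Applying Theorem~\ref{thm: tame inverse} to the smooth solution just constructed gives
\[
\|(F_\epsilon,G_\epsilon)\|_{H^r(\sP)}^2 \le C_r\|(\mu,\nu)\|_{H^r(\sP)}^2+C_r\|(\mu,\nu)\|_{H^1(\sP)}^2\Big(\|(f,g)\|_{H^{r+4}(\sP)}+\|H''(f,g)\|_{C^{r}(\overline\sP)}+1\Big)^2
\]
with $C_r$ independent of $\epsilon\in(0,1]$. Extracting a subsequence $\epsilon_n\to 0$ along which $(F_{\epsilon_n},G_{\epsilon_n})$ converges weakly in $H^r(\sP)$ (and strongly in $H^{r-1}(\sP)$, by Rellich), the limit $(F,G)$ inherits admissibility (the boundary condition and periodicity pass to the trace) and the norm bound by weak lower semicontinuity. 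Passing to the limit in~\eqref{weak linear problem}, where each term is linear in $(F_{\epsilon_n},G_{\epsilon_n})$ and the $\epsilon$-Laplacian term is $\epsilon_n\int_{\sP}(\nabla F_{\epsilon_n}\innpr\nabla \delta F+\nabla G_{\epsilon_n}\innpr\nabla \delta G)\,dx\,dy\,dz\to 0$ (thanks to the uniform $H^1$ bound), yields a solution of~\eqref{weak linear problem} with $\epsilon=0$ satisfying the stated estimate.

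\textbf{Removing the smoothness assumption on the data.} Given $(f,g)\in H^{r+4}_{loc}(D)$, $H''(f,g)\in C^{r}(\overline D)$ and $(\mu,\nu)\in H^r_{loc}(D)$ as in the statement, I would approximate them by sequences $(f_k,g_k)$, $(H''(f,g))_k$ and $(\mu_k,\nu_k)$ that are smooth, $(P_1,P_2)$-periodic in $(y,z)$, converge in the respective topologies and satisfy the smallness hypotheses for $k$ large. The previous step produces solutions $(F_k,G_k)$ with bounds uniform in $k$ coming from the uniform convergence of the coefficient norms appearing on the right-hand side. A further weak compactness argument in $H^r$ together with the linearity of the weak formulation yields the desired solution $(F,G)$ for the original data; the inequality of the theorem then follows by lower semicontinuity.

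\textbf{Main obstacle.} The serious work has already been done in Theorem~\ref{thm: tame inverse}; the only nontrivial point here is to make sure that the passage $\epsilon\to 0$ is clean, i.e.\ that the $\epsilon$-term drops out in the weak formulation and that no higher-order compactness is needed (the tame estimate already provides $H^r$ bounds uniformly in $\epsilon$). Once this is granted, the approximation of the data is a routine diagonal argument.
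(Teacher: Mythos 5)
Your proposal is correct and follows essentially the same route as the paper: elliptic regularization in the spirit of Kohn--Nirenberg, i.e.\ solve the coercive problem for $\epsilon\in(0,1]$ (with smooth approximations of the data so that the a priori estimate of Theorem~\ref{thm: tame inverse} is applicable), and then pass to the limit $\epsilon\to 0$ using the uniformity of that estimate, the $\epsilon$-term dropping out of the weak formulation. The only cosmetic difference is the order of the two limiting procedures (you approximate the data after sending $\epsilon\to0$, the paper does it at fixed $\epsilon>0$), which is immaterial.
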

This result remains true without the
simplifying hypothesis \eqref{eq: for simplicity}.

\section{A solution by the Nash-Moser method}
\label{sec: Nash-Moser}

In this section we shall take $\bar f$ and $\bar g$ to be some fixed linear functions and let $R$ be the corresponding Jacobian matrix with respect to $(y,z)$ as in the Introduction.

Let us define three decreasing sequences of Banach spaces.

\noindent
{\bf Definition of the Banach spaces $\sU_k$.}
For each integer $k\geq 2$, 
let $\sU_{k}$ be the real linear space
of all $(F,G)$ in $H^{k}_{loc}(D)$
satisfying (Ad'2) and (Ad'3).  
We define the norm $\|\cdot\|_k$ on $\sU_k$  as
$$\|(F,G)\|^2_{k}=\|F\|_{H^{k}(\sP)}^2
+\|G\|_{H^{k}(\sP)}^2~.$$

\noindent
{\bf Definition of the Banach spaces $\sV_k$.}
For each integer $k\geq 0$, 
let $\sV_{k}$ be the real linear space
of all $(\mu,\nu)$ in $H^{k}_{loc}(D)$ that satisfy
the periodicity condition (Ad'2)  almost everywhere. 
We define the norm $\|\cdot\|_{k}$ on $\sV_k$ by
$$\|(\mu,\nu)\|^2_{k}=\|\mu\|_{H^{k}(\sP)}^2
+\|\nu\|_{H^{k}(\sP)}^2~.$$

\noindent
{\bf Definition of the Banach spaces $\sW_k$.}
For each integer $k\geq 4$,
let $\sW_{k}$ be the real linear space
of $(f_0,g_0,H_0,c)$ such that
\begin{itemize}
\item[(i)]
$f_0,g_0\in H^{k}_{loc}(D)$ satisfy  the periodicity condition (Ad'2),
\item[(ii)]
$H_0\in C^{k-2}(\RR^2)$ is periodic  with respect to the lattice generated by $RP_1e_1$ and $RP_2e_2$, and $c\in \RR^2$.
\end{itemize}
Note that
(ii)  ensures that 
$H_0(\bar f+f_0+f_1, \bar g+g_0+g_1)$
satisfies (Ad'2) for all $(f_1,g_1)\in \sU_k$.

We define the norm $\|\cdot\|_{k}$ on $\sW_k$ by
\begin{equation*}
\|(f_0,g_0,H_0,c)\|^2_{k}
=\|f_0\|_{H^{k}(\sP)}^2
+\|g_0\|_{H^{k}(\sP)}^2
+\|H_0\|^2_{C^{k-2}(\overline{\sQ})}+|c|^2\,.
\end{equation*}

\vspace{2mm}

Given $(f_0,g_0,H_0,c)\in \sW_4$, with $H_0\in C^{3}(\RR^2)$,
we define the map
$\sF\colon \sU_4\rightarrow \sV_2$ by
$$\left(\begin{array}{c}f_1\\g_1\end{array}\right)\rightarrow
\sF\left(\begin{array}{c}f_1\\g_1\end{array}\right)
=\left(\begin{array}{c}
-\Div(\nabla  g \times (\nabla  f\times \nabla  g))+
 \partial_fH(f,g)
\\
-\Div((\nabla  f\times \nabla  g)\times \nabla  f))+\partial_g H(f,g)
\end{array}\right)
$$
with
$f=\bar f+f_0+f_1$, $g=\bar g +g_0+g_1$ and $H(f,g)=c_1f+c_2g+H_0(f,g)$.

The following theorem results directly from Theorem \ref{thm: general estimate} and 
\eqref{eq: composition} (with $\xi=H$).
\begin{thm}
\label{thm: summary}
Let $k\geq 1$ be an integer
and suppose that $(f_0,g_0,H_0,c)\in \sW_{k+4}$, 
$(f_1,g_1)\in \sU_{k+4}$,
$\|H_0''\|_{C(\overline{\mathcal Q})}$ 
is small enough, and 
 $(f,g)$ is in some small enough neighborhood of
$(\bar f,\bar g)$ in  $H^5(\sP)$, with
$$
f=\bar f +f_0+f_1, ~~g=\bar g +g_0+g_1
~\text{ and }~H(f,g)=c_1f+c_2g+H_0(f,g).
$$
There exists a constant $M_k>0$ such that if
$$
\|(\partial_2\nabla f,\partial_3 \nabla f,\partial_2\nabla g,\partial_3\nabla g)
\|_{C^1(\overline \sP)}
<M_k^{-1}
$$
we get the following.
Given any
$(\mu,\nu)\in \sV_{k}$, there exists a unique
$(F,G)\in \sU_{k}$ satisfying 
\eqref{weak linear problem} with $\epsilon=0$.
It also satisfies
\begin{equation*}
\|(F,G)\|_ {k}\leq M_k \|(\mu,\nu)\|_{k}+M_k\|(\mu,\nu)\|_{1}\Big(\|(f_1,g_1)\|_{H^{k+4}(\sP)}+1\Big)
\end{equation*}
and
$$\|(F,G)\|_0 \leq M_0 \|(\mu,\nu)\|_{0}$$
for some constant $M_0>0$ independent of $k$.
\end{thm}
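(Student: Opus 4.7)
The plan is to invoke Theorem \ref{thm: general estimate} with $r=k$ applied to the admissible pair $(f,g)=(\bar f+f_0+f_1,\bar g+g_0+g_1)$ and $H=c_1f+c_2g+H_0(f,g)$, then to translate the output into the norms of the spaces $\sU_k,\sV_k,\sW_k$. All hypotheses of Theorem \ref{thm: general estimate} either hold by assumption (the $H^5$-closeness of $(f,g)$ to $(\bar f,\bar g)$, the smallness of $\|(\partial_2\nabla f,\partial_3\nabla f,\partial_2\nabla g,\partial_3\nabla g)\|_{C^1(\overline\sP)}$) or follow from the data: $(f_0,g_0,H_0,c)\in\sW_{k+4}$ forces $H_0\in C^{k+2}$ so that $H\in C^2$ and $H''=H_0''\in C^k$, and the smallness of $\|H''(f,g)\|_{C(\overline\sP)}$ follows from the smallness of $\|H_0''\|_{C(\overline\sQ)}$ via \eqref{eq: composition} at order zero. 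The existence of $(F,G)\in\sU_k$ is part of what Theorem \ref{thm: general estimate} delivers via its elliptic-regularization passage.

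The only substantive calculation is to replace the right-hand side of the estimate
\[
\|(F,G)\|_{H^k(\sP)}^2\le C_k\|(\mu,\nu)\|_{H^k(\sP)}^2+C_k\|(\mu,\nu)\|_{H^1(\sP)}^2\bigl(\|(f,g)\|_{H^{k+4}(\sP)}+\|H''(f,g)\|_{C^k(\overline\sP)}+1\bigr)^2
\]
of Theorem \ref{thm: general estimate} by a multiple of $\bigl(\|(\mu,\nu)\|_k+\|(\mu,\nu)\|_1(\|(f_1,g_1)\|_{H^{k+4}(\sP)}+1)\bigr)^2$. Since $\bar f,\bar g$ are fixed linear functions and $\|(f_0,g_0)\|_{H^{k+4}(\sP)}\le\|(f_0,g_0,H_0,c)\|_{k+4}$ is bounded by the fixed data (and so absorbable into $M_k$), $\|(f,g)\|_{H^{k+4}(\sP)}\le\|(f_1,g_1)\|_{H^{k+4}(\sP)}+\text{const}$. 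For the $C^k$ term, note that $H''=H_0''$ since the linear correction $c_1f+c_2g$ is killed by second differentiation, and apply \eqref{eq: composition} with $\xi=H_0''$ on $[-M,M]^2$ (the bound $M$ on $\|(f,g)\|_{C^0(\overline\sP)}$ coming from the $H^5$-closeness via the 3D Sobolev embedding) together with $H^{k+4}(\sP)\hookrightarrow C^k(\overline\sP)$ to get
\[
\|H''(f,g)\|_{C^k(\overline\sP)}\lesssim\|H_0\|_{C^{k+2}(\overline\sQ)}\bigl(1+\|(f,g)\|_{H^{k+4}(\sP)}\bigr)\lesssim 1+\|(f_1,g_1)\|_{H^{k+4}(\sP)},
\]
again absorbing the bounded factor $\|H_0\|_{C^{k+2}(\overline\sQ)}$ into $M_k$. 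Substitution and taking square roots yields the announced tame bound, with $M_k$ chosen to dominate $\sqrt{C_k}$, $C_k^{-1}$ (to meet the smallness hypothesis of Theorem \ref{thm: general estimate}), and the implicit constants from the composition and Sobolev steps.

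The remaining two assertions are straightforward. The $L^2$ bound $\|(F,G)\|_0\le M_0\|(\mu,\nu)\|_0$ is exactly \eqref{eq: FG less munu} from Proposition \ref{prop: first estimate}, and $M_0=32L^2/(\pi^2\min_{\overline\sP}v_1^2)$ depends only on $\bar v_1$ and $L$, hence is independent of $k$ as long as $v_1$ stays close to the non-vanishing constant $\bar v_1$. Uniqueness of $(F,G)$ follows by applying this $L^2$ bound to the difference of two candidate solutions, which solves \eqref{weak linear problem} with $(\mu,\nu)=0$. The main (and essentially only) subtlety throughout is the bookkeeping in the composition step: one must ensure that the $C^k$-norm of $H''(f,g)$ is bounded \emph{linearly} in $\|(f,g)\|_{H^{k+4}(\sP)}$ so as to preserve the tame structure, but \eqref{eq: composition} is designed precisely for this and no new idea is required.
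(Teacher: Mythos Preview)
Your proof is correct and follows the same approach as the paper, which simply states that the theorem ``results directly from Theorem~\ref{thm: general estimate} and \eqref{eq: composition} (with $\xi=H$).'' You have spelled out the details of this reduction carefully, including the observation that $H''=H_0''$, the use of the Sobolev embedding $H^{k+4}\hookrightarrow C^k$ to feed into \eqref{eq: composition}, and the fact (noted in the paper's remark after the theorem) that $M_k$ may depend on $(f_0,g_0,H_0,c)$, which justifies absorbing their norms into the constant.
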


\begin{rem}
The constants $M_k$ in Theorem \ref{thm: summary} can also depend on $(f_0,g_0,H_0,c)$ and $(\bar f,\bar g)$.
\end{rem}

 Let us state Theorem 6.3.1 in \cite{Han-Hong}.
There $\Omega$ is a smooth domain in $\RR^n$ or a rectangle
with the sides parallel to the coordinate axes and with
periodic boundary conditions with respect to $n-1$ coordinates.
The corresponding Sobolev spaces are simply denoted by $H^{k}$. 
\begin{thm}
\label{thm: Nash-Moser}
Suppose $\sF(w)$ is a nonlinear differential operator of
order $m$ in a domain $\Omega\subset \RR^n$, given by
$$\sF(w)=\Gamma(x,w,\partial w,\ldots,\partial^m w),$$
where $\Gamma$ is smooth (see however the remark below).

Suppose that $d_0,d_1,d_2,d_3,s_0$ and $\widetilde s$ 
are non-negative integers with
$$d_0\geq m+[n/2]+1$$
and
$$\widetilde s\geq\max \{3m+2d_*+[n/2]+2,m+d_*+d_0+1,m+d_2+d_3+1\},$$
where $d_*=\max\{d_1,d_3-s_0-1\}$. 
Assume that, for any $h\in H^{\widetilde s +d_1}
= H^{\widetilde s +d_1}(\Omega)$
 and $w\in H^{\widetilde s +d_2}$ with
$$\|w\|_{H^{d_0}}\leq r_0\coloneqq 1,$$
the linear equation 
\begin{equation}
\label{eq: abstract linear}
\sF'(w)\rho=h
\end{equation}
admits a solution $\rho\in H^{\widetilde s}$ satisfying for any $s=0,1,\ldots,\widetilde s$
$$\|\rho\|_{H^s}\leq c_s\left(\|h\|_{H^{s+d_1}}+(s-s_0)^+(\|w\|_{H^{s+d_2}}+1)
\|h\|_{H^{d_3}}\right),$$
where $c_s$ is a positive constant independent of $h,w$ and $\rho$.
Then there exists a positive constant $\mu_*$, depending only on
$\Omega,c_s,m,d_0,d_1,d_2,d_3,s_0$ and $\widetilde s$, such that if
\begin{equation}
\label{eq: mu* condition}
\|\sF(0)\|_{H^{\widetilde s-m}}\leq \mu_*^2,
\end{equation}
the equation $\sF(w)=0$ admits an $H^{\widetilde s -m-d_*-1}$
solution $w$ in $\Omega$.
\end{thm}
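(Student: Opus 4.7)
The plan is to run a modified Newton iteration with smoothing — the standard Nash--Moser scheme as developed in Section~6 of \cite{Han-Hong}. Let $\{S_\theta\}_{\theta\ge 1}$ be a family of smoothing operators on the relevant (here periodic) Sobolev scale, obtained e.g.\ by Fourier truncation or convolution with a scaled mollifier, satisfying the standard bounds
\[
\|S_\theta u\|_{H^b}\lesssim \theta^{(b-a)_+}\|u\|_{H^a},\qquad \|(I-S_\theta)u\|_{H^a}\lesssim \theta^{a-b}\|u\|_{H^b}\ \text{for}\ a\le b,
\]
together with the usual Moser-type tame product estimates for products and compositions with the smooth $\Gamma$. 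Fix a super\-linearly growing sequence $\theta_k=\theta_0 q^k$ (with $q>1$ to be tuned) and set $w_0=0$, $w_{k+1}=w_k+\rho_k$, where $\rho_k$ is produced by the tame inverse hypothesis as a solution of $\sF'(w_k)\rho_k=-S_{\theta_k}\sF(w_k)$. Applicability of this hypothesis requires the ball constraint $\|w_k\|_{H^{d_0}}\le 1$ at each step, which must be maintained inductively.

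The central analytic input is Taylor's theorem applied to $\sF$, which gives the quadratic decomposition
\[
\sF(w_{k+1})=\sF(w_k)+\sF'(w_k)\rho_k+Q(w_k,\rho_k)=(I-S_{\theta_k})\sF(w_k)+Q(w_k,\rho_k),
\]
where $Q$ is bilinear and controlled by Moser-type estimates of the form $\|Q(w_k,\rho_k)\|_{H^s}\lesssim \|\rho_k\|_{\text{high}}\|\rho_k\|_{\text{low}}(1+\|w_k\|_{H^{s+\ldots}})$. I would then prove, by induction on $k$, inequalities of the form
\[
\|\sF(w_k)\|_{H^{s-m}}\le C\,\theta_k^{-\alpha(s)},\qquad \|w_k\|_{H^s}\le C\,\theta_k^{\beta(s)},
\]
for $s$ in the admissible window; the first term on the right of the decomposition loses the factor $\theta_k^{a-b}$ via the smoothing estimate, while $Q$ is quadratic, so that the tame inverse bound from \eqref{eq: abstract linear} converts the two contributions into recursive inequalities for the exponents $\alpha(s),\beta(s)$. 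The base step uses \eqref{eq: mu* condition} with $\mu_*$ small enough to pick $\theta_0$ so that both inductive inequalities hold at $k=0$.

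I would organize the argument in three stages. \emph{First}, establish the smoothing and interpolation toolkit on the rectangular periodic spaces (which are essentially equivalent to the torus/rectangle case treated in \cite{Han-Hong}). \emph{Second}, carry out the bookkeeping: plug the tame inverse estimate and the Moser estimate for $Q$ into the decomposition, verify that the gap conditions $d_0\ge m+[n/2]+1$ and $\widetilde s\ge\max\{3m+2d_*+[n/2]+2,\,m+d_*+d_0+1,\,m+d_2+d_3+1\}$ provide exponents $\alpha,\beta$ that close the induction, and that $\theta_0$ can be chosen in terms of $\mu_*$ to start it. \emph{Third}, extract convergence: the inductive bounds give $\|\sF(w_k)\|_{L^2}\to 0$ together with a uniform bound $\|w_k\|_{H^{\widetilde s-m-d_*-1}}\le C$, and interpolation between these two controls makes $\{w_k\}$ Cauchy in $H^{\widetilde s-m-d_*-1}$; the limit $w$ satisfies $\sF(w)=0$ with the stated regularity.

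The hard part is the index bookkeeping: simultaneously ensuring that (i) $\|w_k\|_{H^{d_0}}\le 1$ is preserved (which forces $\beta(d_0)<0$, so that one must lose some regularity relative to $\widetilde s$), and (ii) the quadratic error $Q(w_k,\rho_k)$ is dominated by the smoothing gain in $(I-S_{\theta_k})\sF(w_k)$, so that the new $\sF(w_{k+1})$ actually decays faster than $\theta_{k+1}^{-\alpha(s)}$. The precise numerical hypotheses on $d_0,d_1,d_2,d_3,s_0,\widetilde s$ are exactly what make this reconciliation possible, with the Sobolev embedding $H^{d_0}\hookrightarrow C^m$ guaranteeing that the nonlinear substitution $\Gamma(x,w,\partial w,\ldots,\partial^m w)$ and its linearization $\sF'(w)$ are well defined and enjoy Moser tame bounds. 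The remaining details are the routine — but delicate — verifications carried out in \cite{Han-Hong}, which is why the authors invoke the statement rather than reprove it.
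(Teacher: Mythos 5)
Note first that the paper does not prove this statement at all: it is quoted verbatim as Theorem 6.3.1 of \cite{Han-Hong}, with remarks only on how that proof extends to systems and to finitely differentiable $\Gamma$. Your sketch must therefore be measured against Han--Hong's argument, and in outline it is the same scheme (smoothed Newton iteration, tame estimates, convergence of the increments in an intermediate norm). However, as written your iteration has a concrete flaw: you solve $\sF'(w_k)\rho_k=-S_{\theta_k}\sF(w_k)$ with the linearization taken at the raw iterate $w_k$, whereas the hypothesis of the theorem provides solvability and the tame estimate only for base points $w\in H^{\widetilde s+d_2}$ with $\|w\|_{H^{d_0}}\le 1$, while the solution it returns lies merely in $H^{\widetilde s}$. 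Since $d_2>0$ (in the paper's application $d_2=4$), already $w_1=\rho_0\in H^{\widetilde s}$ need not belong to $H^{\widetilde s+d_2}$, so the hypothesis cannot be invoked at the second step; moreover the right-hand side of the tame estimate involves $\|w\|_{H^{s+d_2}}$ up to $s=\widetilde s$, a quantity that is not even finite for your iterates. The cure, which is what makes the hypothesis in this form usable and is what the iteration (6.3.14)--(6.3.15) in \cite{Han-Hong} is designed around, is to linearize at smoothed iterates (equivalently, to smooth the approximate solutions), i.e.\ to solve the linear problem \eqref{eq: abstract linear} at $S_{\theta_k}w_k$ rather than at $w_k$. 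Then the error decomposition acquires a third term $(\sF'(w_k)-\sF'(S_{\theta_k}w_k))\rho_k$ in addition to $(I-S_{\theta_k})\sF(w_k)$ and the quadratic Taylor remainder, and this term must be carried through the induction (it is controlled by the smoothing estimates and Moser bounds for $\sF'$), together with the polynomial growth in $\theta_k$ of $\|S_{\theta_k}w_k\|_{H^{\widetilde s+d_2}}$, which enters through the factor $\|w\|_{H^{s+d_2}}$ in the tame estimate. Your two-term decomposition and the associated bookkeeping are therefore incomplete as they stand.

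A smaller point: in your final stage, the Cauchy property in $H^{\widetilde s-m-d_*-1}$ does not follow from interpolating ``$\|\sF(w_k)\|_{L^2}\to 0$'' against a uniform bound on $\|w_k\|_{H^{\widetilde s-m-d_*-1}}$; rather one interpolates the increments $\rho_k$ (decay in low norms, controlled growth in $H^{\widetilde s}$) to obtain summability of $\|\rho_k\|_{H^{\widetilde s-m-d_*-1}}$, and then uses the embedding of $H^{\widetilde s-m-d_*-1}$ into $C^m$ (guaranteed by $\widetilde s\ge m+d_*+d_0+1$ and $d_0\ge m+[n/2]+1$) together with the decay of $\sF(w_k)$ to pass to the limit in the equation. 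With the smoothed-base-point scheme and this correction, your outline coincides with the proof the paper invokes by citation.
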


\begin{rems}$ $
\begin{itemize}
\item
By inspecting the proof in \cite{Han-Hong}, 
we see that it holds as well for systems of $N\geq 1$ differential equations.
Moreover the  constant $r_0=1$ can be replaced by any fixed value $r_0>0$
by multiplying appropriately functions by constant factors.
\item
Also the solution $w$ is the limit in $H^{\widetilde s-m-d_*-1}$ of  sums
of solutions in $H^{\widetilde s}$ to linear equations of type \eqref{eq: abstract linear}. 
See in \cite{Han-Hong} equations (6.3.14) and (6.3.15), and
the proof of Theorem 6.3.1 on p.\ 103.
\item
We can relax the condition that $\Gamma$ is smooth.
Let $\widehat c>0$ be such that, for all $ w\in H^{d_0}$ with
$\|w\|_{H^{d_0}}\leq r_0$, we have
$$\|w\|_{C^m(\overline \Omega)} \leq \widehat c,$$
and define $\Sigma\subset\RR^{N+Nn+Nn^2+\ldots Nn^m}$ as the ball
of radius $\widehat c$ centered at the origin. 
In the proof, the map $\Gamma$  appears in the various
estimates via $\|\sF(0)\|_{H^{\widetilde s-m}}$ 
and  via   ``constants'' depending on
$$\|\partial_{\alpha}\partial_\beta\Gamma\|
_{C^{\widetilde s-m}(\overline \Omega\times \overline\Sigma)},
$$
where $\partial_\alpha$ and $\partial_\beta$
are all possible partial derivatives with respect to
$w,\ldots,\partial^mw$.
See \eqref{eq: composition} and,
in \cite{Han-Hong}, the proof of $(P_3)_{\ell+1}$ on p.\ 101. It therefore suffices to assume that $\Gamma$ is of class $C^{\tilde s-m+2}$.
\item 
From \cite{Han-Hong} it follows that there exists a constant $C>0$ such that $\|w\|_{H^{\tilde s-m-d*-1}}\le C\mu_*^2$. 
More precisely, see in \cite{Han-Hong} the last estimate in the proof of
$(P_1)_{l+1}$ on p.\ 100,  (6.3.31) and the proof of
Theorem 6.3.1 on p.\ 103.
\end{itemize}
\end{rems}

To apply this theorem,
we need to check \eqref{eq: mu* condition}.
For this reason, we shall  stay near a solution (namely $(f_1,g_1)=0$) 
to an unperturbed problem (namely $(f_0,g_0)=0$ and $H=0$), so that
\eqref{eq: mu* condition} is satisfied,
and rely on the fact that all relevant ``constants'' (in particular $\mu^*$)  
for the perturbed
problem can be chosen equal to those of the unperturbed problem.
\begin{thm} 
\label{thm: main result}
Let $j\ge 0$ be an integer, $R>0$ arbitrary and $\delta>0$ sufficiently small
and assume that $(f_0, g_0, H_0, c)\in \sW_{13+j}$ with
 $\|(f_0,g_0,H_0,c)\|_{13+j}$ $<R$ and $\|(f_0, g_0, H_0, 0)\|_{5}<\delta$.
It is possible to choose $\epsilon>0$ (independent of
$(f_0,g_0,H_0,c)$, but depending on $(\bar f, \bar g)$, $j$, $R$ and $\delta$) such  that
if $\|\sF(0,0)\|_{7+j}<\epsilon$ then there exists
$(f^*,g^*)\in \sU_{6+j}$ satisfying
$\sF(f^*,g^*)=0$.
\end{thm}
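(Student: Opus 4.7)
The plan is to apply Theorem \ref{thm: Nash-Moser} to $\sF$, using the tame inverse estimate of Theorem \ref{thm: summary}. The operator is of order $m=2$ in dimension $n=3$, and matching
\[
\|(F,G)\|_k \leq M_k \|(\mu,\nu)\|_k + M_k \|(\mu,\nu)\|_1 \bigl(\|(f_1,g_1)\|_{H^{k+4}(\sP)}+1\bigr), \qquad \|(F,G)\|_0\le M_0\|(\mu,\nu)\|_0,
\]
against the Han--Hong tame hypothesis identifies $d_1=0$, $d_2=4$, $d_3=1$, $s_0=0$ (so that $(s-s_0)^+$ vanishes at $s=0$ and the extra factor of $s$ appearing at $s\ge 1$ is absorbed by setting $c_s\coloneqq M_s$), hence $d_*=\max\{d_1,d_3-s_0-1\}=0$. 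Choosing $d_0=5$ (so that $H^{d_0}\hookrightarrow C^3(\overline{\sP})$ in three dimensions) and $\widetilde s=9+j$ verifies $d_0\ge m+[n/2]+1=4$ and $\widetilde s\ge \max\{3m+2d_*+[n/2]+2,\,m+d_*+d_0+1,\,m+d_2+d_3+1\}=9$, and yields a solution in $H^{\widetilde s-m-d_*-1}=H^{6+j}$, which is precisely $\sU_{6+j}$.

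Next I would verify the remaining hypotheses of Theorem \ref{thm: Nash-Moser}. The smallness conditions required by Theorem \ref{thm: summary} --- proximity of $(f,g)$ to $(\bar f,\bar g)$ in $H^5(\sP)$, the bound $\|(\partial_2\nabla f,\partial_3\nabla f,\partial_2\nabla g,\partial_3\nabla g)\|_{C^1(\overline\sP)}<C_{9+j}^{-1}$, and smallness of $\|H_0''\|_{C(\overline\sQ)}$ --- all follow for sufficiently small $\delta$ and $r_0$, using that $\bar f,\bar g$ are linear together with the embedding $H^5\hookrightarrow C^3$ to bound $\|f_0+f_1\|_{C^3(\overline\sP)}\lesssim \delta+r_0$ (and likewise for $g$). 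The smoothness required of $\Gamma$ is only $C^{\widetilde s-m+2}=C^{9+j}$, per the third remark after Theorem \ref{thm: Nash-Moser}; since the top-order argument is $H''$, this reduces to $H_0\in C^{11+j}$ and $(f_0,g_0)\in C^{9+j}$, both granted by $(f_0,g_0,H_0,c)\in\sW_{13+j}$ via the Sobolev embedding $H^{13+j}\hookrightarrow C^{11+j}$. Finally, the hypothesis $\|\sF(0,0)\|_{7+j}<\epsilon$ is exactly the smallness threshold \eqref{eq: mu* condition} at $\widetilde s-m=7+j$, so setting $\epsilon\coloneqq \mu_*^2$ and invoking Theorem \ref{thm: Nash-Moser} (in the system-valued form noted in the first remark following it) produces the desired $(f^*,g^*)\in\sU_{6+j}$ with $\sF(f^*,g^*)=0$.

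The main obstacle, in my view, is bookkeeping rather than a new analytic difficulty: the constants $M_k$ of Theorem \ref{thm: summary} (and hence $c_s$ and $\mu_*$) may depend on $(f_0,g_0,H_0,c)$, yet the statement demands that $\epsilon$ depend only on $(\bar f,\bar g)$, $j$, $R$ and $\delta$. To handle this I would inspect how the $M_k$ arise in the proof of Theorem \ref{thm: summary}: they come through finitely many continuous functionals of $(f_0,g_0,H_0,c)$ that are uniformly bounded on the class $\{\|(f_0,g_0,H_0,c)\|_{13+j}<R,\ \|(f_0,g_0,H_0,0)\|_5<\delta\}$, so taking suprema produces uniform bounds on the $M_k$, and therefore a uniform choice of $\mu_*^2=\epsilon$ with the prescribed dependencies.
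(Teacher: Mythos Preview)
Your proof is correct and follows essentially the same approach as the paper, with nearly identical parameter choices (the paper takes $s_0=1$ rather than your $s_0=0$, but either works since $d_*=0$ in both cases). One small point the paper makes explicit and you leave implicit: the solution lies in $\sU_{6+j}$, not merely $H^{6+j}$, because (per the second remark after Theorem~\ref{thm: Nash-Moser}) it is a limit in $H^{6+j}$ of sums of solutions in $\sU_{9+j}$ and hence inherits the boundary condition (Ad$'$3).
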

\begin{proof}
We choose $r_0>0$ small enough so that
Theorem \ref{thm: summary} with $k=9+j$ can be applied
for all $ (f_1,g_1)\in \sU_5$ in the closed ball of radius $r_0$ 
centered at the origin.
Let $\widehat c>0$ be such that
$$\|(f_1,g_1)\|_{C^2(\overline \sP)} \leq \widehat c $$
for all $ (f_1,g_1)\in \sU_5$ in this ball, 
and define $\Sigma\subset\RR^{2+6+18}$ 
as the ball of radius $\widehat c$ centered at the origin.
 
We apply Theorem \ref{thm: Nash-Moser}
with $m=2$,
$\Omega=\sP\subset \RR^n$, $n=3$,
$d_0=5$, $d_1=0$, $d_2=4$, $d_3=1$, $s_0=1$, $d_*=0$ and  $\widetilde s=9+j$.
We get $\widetilde s+d_1=9+j$,
$\widetilde s+d_2=13+j$,
$\widetilde s-m=7+j$,
$\widetilde s-m-d_*-1=6+j$
and a solution $(f^*,g^*)\in H^{6+j}(\sP)$.
Let the map $\Gamma\colon \sP\times\RR^{1+1+3+3+9+9}
\rightarrow\RR^2$
be such that
$$\sF(f_1,g_1)=\Gamma(x,y,z,f_1,g_1,f_1',g_1',f_1'',g_1'').$$
It appears in the various
estimates also via   ``constants'' depending on
$\|\partial_{\alpha}\partial_\beta\Gamma\|
_{C^{\widetilde s-m}(\overline \sP\times \overline\Sigma)}$,
where $\partial_\alpha$ and $\partial_\beta$
are all possible partial derivatives with respect to
$f_1,g_1,f_1',g_1',f_1''$ or $g_1''$.
Observe that 
$(f_0,g_0,H_0,c)\in \sW_{13+j}$ implies
$(f_0,g_0,H_0,c)\in C^{\widetilde s+2}(\overline \sP)
\times C^{\widetilde s+2}(\overline \sP)
\times C^{\widetilde s+2}(\overline \sQ)\times \RR^2$ and
$\partial_{\alpha}\partial_\beta\Gamma \in
C^{\widetilde s-m}(\overline \sP\times \overline \Sigma)$.
As $(f^*,g^*)$ is the limit in $H^{6+j}(\sP)$ 
of sums of solutions in $\sU_{9+j}$ to equations of type
\eqref{weak linear problem} (with $\epsilon=0$), it satisfies (Ad'3) and thus belongs to $\sU_{6+j}$.
\end{proof}

As a corollary, we get the following simplified statement.

\begin{thm} 
\label{thm: corollary}
Assume that $H_0\in C^{11+j}$ and $f_0, g_0 \in H^{13+j}$.
It is possible to choose $\bar\epsilon>0$ such that
if $\|(f_0,g_0,H_0,c)\|_{13+j} <\bar \epsilon$,
 then there exists
$(f^*,g^*)\in \sU_{6+j}$ satisfying
$\sF(f^*,g^*)=0$.
\end{thm}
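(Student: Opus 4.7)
The plan is to deduce this corollary directly from Theorem~\ref{thm: main result}. I would fix an arbitrary $R>0$ (say $R=1$), choose $\delta>0$ small enough that the hypotheses of Theorem~\ref{thm: main result} are in force, and let $\epsilon=\epsilon(j,R,\delta,\bar f,\bar g)>0$ be the threshold produced there. The claim then reduces to choosing $\bar\epsilon\in(0,\min\{R,\delta\}]$ so small that $\|\sF(0,0)\|_{7+j}<\epsilon$ whenever $\|(f_0,g_0,H_0,c)\|_{13+j}<\bar\epsilon$; Theorem~\ref{thm: main result} then delivers $(f^*,g^*)\in \sU_{6+j}$ with $\sF(f^*,g^*)=0$.

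The first two assumptions in Theorem~\ref{thm: main result} will be immediate from $\bar\epsilon\leq\min\{R,\delta\}$: the bound $\|(f_0,g_0,H_0,c)\|_{13+j}<R$ is the hypothesis itself, and $\|(f_0,g_0,H_0,0)\|_5\leq \|(f_0,g_0,H_0,c)\|_{13+j}<\delta$ follows from monotonicity of $\sW_k$-norms in $k$ (and the fact that dropping $c$ only decreases the norm). All the actual work lies in controlling $\|\sF(0,0)\|_{7+j}$.

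At $(f_1,g_1)=(0,0)$ one has $f=\bar f+f_0$, $g=\bar g+g_0$ and
\[
\sF(0,0)=\begin{pmatrix}-\Div(\nabla g\times(\nabla f\times\nabla g))+c_1+\partial_f H_0(f,g)\\ -\Div((\nabla f\times\nabla g)\times\nabla f)+c_2+\partial_g H_0(f,g)\end{pmatrix}.
\]
Because $\bar f,\bar g$ are linear, $\partial^2\bar f=\partial^2\bar g=0$ and $\nabla\bar f,\nabla\bar g$ are constants, so I would expand each divergence into a finite sum of monomials in the first and second derivatives of $f_0,g_0$, every monomial containing at least one second-order factor. Since $H^{7+j}(\sP)$ is a Banach algebra for $j\geq 0$ (as $7+j>3/2$), and since $\partial^2 f_0,\partial^2 g_0\in H^{11+j}\hookrightarrow H^{7+j}$ and $\partial f_0,\partial g_0\in H^{12+j}\hookrightarrow H^{7+j}$ with norms bounded by $\|(f_0,g_0)\|_{H^{13+j}}<\bar\epsilon$, the standard product estimates give
\[
\|\Div(\nabla g\times(\nabla f\times\nabla g))\|_{H^{7+j}(\sP)}\lesssim \|(f_0,g_0)\|_{H^{13+j}}\bigl(1+\|(f_0,g_0)\|_{H^{13+j}}\bigr)^2\lesssim \bar\epsilon,
\]
and similarly for the other divergence.

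For the composition term I would appeal to \eqref{eq: composition}. Since $H_0\in C^{11+j}$ implies $\partial_f H_0,\partial_g H_0\in C^{10+j}$ with $\|\partial_{f,g}H_0\|_{C^{10+j}}\leq \|H_0\|_{C^{11+j}}\leq \bar\epsilon$, and since the Sobolev embedding gives $H^{13+j}(\sP)\hookrightarrow C^{9+j}(\overline\sP)$ (because $13+j-3/2>9+j$), inequality \eqref{eq: composition} applied to $\xi=\partial_fH_0$ and $(u_1,u_2)=(f,g)$ yields
\[
\|\partial_fH_0(f,g)\|_{C^{9+j}(\overline\sP)}\lesssim \|H_0\|_{C^{11+j}}\bigl(1+\|f\|_{C^{9+j}}+\|g\|_{C^{9+j}}\bigr)\lesssim \bar\epsilon,
\]
and hence the same bound in $H^{7+j}(\sP)$; likewise for $\partial_g H_0(f,g)$. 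The constants $c_1,c_2$ contribute at most $|c|\leq \bar\epsilon$. Summing the three contributions, $\|\sF(0,0)\|_{7+j}\leq C\bar\epsilon$ with $C$ depending only on $j$, $\bar f$ and $\bar g$. A final choice $\bar\epsilon\leq\min\{R,\delta,\epsilon/C\}$ therefore ensures $\|\sF(0,0)\|_{7+j}<\epsilon$, and Theorem~\ref{thm: main result} completes the argument. There is no essential obstacle here; the regularity indices $13+j$ and $11+j$ in the hypothesis were tuned precisely so that $\sF(0,0)$ lies in $H^{7+j}$ with norm linear in $\bar\epsilon$.
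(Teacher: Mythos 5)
Your proposal is correct and follows exactly the route the paper intends: the paper states this result as a direct corollary of Theorem~\ref{thm: main result}, the only implicit content being that $\|\sF(0,0)\|_{7+j}$ is controlled (linearly, up to harmless higher-order factors) by $\|(f_0,g_0,H_0,c)\|_{13+j}$, which you verify via the algebra property of $H^{7+j}(\sP)$, the vanishing of the second derivatives of $\bar f,\bar g$, and \eqref{eq: composition} together with the Sobolev embedding. Your explicit bookkeeping (choice of $R$, $\delta$, and $\bar\epsilon\leq\min\{R,\delta,\epsilon/C\}$) just makes precise what the paper leaves unsaid.
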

Theorem \ref{thm: main in intro} is a reformulation of this last result  and Theorem \ref{thm: uniqueness}.

\appendix

\section*{Appendix: Representation of divergence free vector fields}

The fact that the vector field $\nabla f\times \nabla g$ is divergence free if $f$ and $g$ are $C^2$ is easily checked using the formula $\Div(u \times v)=v\innpr \rot u-u\innpr \rot v$. A local converse near points where $v$ is non-zero has been known for a long time; see e.g.\ \cite{Barbarosie} and \cite{Ca:1967} (Chapter 3, exercise 14).
A local converse that can be seen as a global converse
under additional conditions can be found in Appendix I in
\cite{GrRu}.
In the present appendix, we give for the reader's convenience
a self-contained proof that a divergence free vector field $v\in C^2(\overline D)$ can be represented globally in this form 
if $v$ is periodic in $y$ and $z$ and $v_1 \ne 0$ in $\overline D$,
and that $f$ and $g$ can be chosen to be of the form ``linear plus periodic". Our argument is essentially a simple version of an elementary proof of global equivalence of volume forms on compact connected manifolds due to Moser \cite{Mo:65}.

For a given point $(x,y,z)\in \overline{D}$ we solve the system of ODEs $\phi'=v(\phi)$, with $\phi(0)=(x,y,z)$, and let 
$T=T(x,y,z)$ be the unique time such that $\phi_1(-T; x,y,z)=0$ (here we use that $\inf_{\overline D} |v_1|>0$ and $\sup_{\overline D} |v|<\infty$). We define the $C^2$ functions $Y, Z\colon \overline{D} \to \RR^2$ by
$$
Y\colon (x,y,z)\mapsto \phi_2(-T; x, y, z) \quad \text{and} \quad 
Z\colon (x,y,z)\mapsto \phi_3(-T; x,y, z).
$$
The functions $Y$ and $Z$ are invariants of the vector field $v$  and therefore $\nabla Y\times \nabla Z=\lambda v$ for some function $\lambda$. Using the fact that $v$ is divergence free, it is easily established that $\lambda$ is another invariant and therefore
\[
\nabla Y \times \nabla Z=\frac{1}{v_1(0,Y,Z)} v 
\]
in view of the relations $Y(0,y,z)=y$ and $Z(0,y,z)=z$. If $F, G\colon \RR^2 \to \RR^2$ and 
$$
f(x,y,z)=F(Y(x,y,z),Z(x,y,z)), \qquad g(x,y,z)=G(Y(x,y,z), Z(x,y,z)),
$$
then
$$
\nabla f\times \nabla g=(\partial_1 F \partial_2 G-\partial_2 F  \partial_1 G) \nabla Y \times \nabla Z.
$$
Thus in order to have $\nabla f\times \nabla g=v$ we must find $F$ and $G$ with
$$
\partial_1 F(Y,Z) \partial_2 G(Y,Z)-\partial_2 F(Y,Z)  \partial_1 G(Y,Z)=v_1(0,Y,Z).
$$
If it weren't for the periodicity conditions,  this would be trivial. We describe next how to make a choice which respects these conditions (the choice is not unique).

Note that $v_1(0,Y,Z)$ is $P_1$-periodic in $Y$ and $P_2$-periodic in $Z$. Let 
$$
\alpha=\frac{1}{P_1P_2}\int_0^{P_1} \int_0^{P_2} v_1(0, Y,Z)\, dY\, dZ
$$
and write $v_1(0,Y,Z)=a(Y)b(Y,Z)$,
where
$$
a(Y)=\frac{1}{P_2} \int_0^{P_2} v_1(0,Y,Z)\, dZ \qquad \text{and} \qquad
b(Y,Z)=\frac{v_1(0,Y,Z)}{a(Y)},
$$
so that
$$
\frac{1}{P_1}\int_0^{P_1} a(Y) \, dY= \alpha
\qquad \text{and} \qquad 
\frac{1}{P_2}\int_0^{P_2} b(Y,Z) \, dZ=1.
$$
We choose
$$
F(Y)=\int_0^Y a(s)\, ds \qquad \text{and} \qquad G(Y,Z)=\int_0^Z b(Y, s) \, ds.
$$
Note that $F$ and $G$ (and hence $f$ and $g$) are $C^2$ and that the map 
\[
\Psi\colon (Y,Z)\mapsto (F(Y),G(Y,Z))
\]
from $\RR^2$ to itself is bijective.
It is easily verified that
$$
\partial_1 F(Y) \partial_2 G(Y,Z)=a(Y)b(Y,Z)=v_1(0, Y, Z),
$$
that $F(Y)-\alpha Y$ is $P_1$-periodic and that $G(Y,Z)-Z$ is $(P_1, P_2)$-periodic. Finally, by the periodicity of $v$ and standard ODE theory, it follows that $(Y(x,y,z), Z(x,y,z))-(y,z)$ is $P_1$ periodic in $y$ and $P_2$-periodic in $z$, and therefore so is $(f(x,y,z),g(x,y,z))-(\alpha y, z)$. This concludes the proof.

As mentioned above, the representation $v=\nabla f\times \nabla g$ is not unique. Indeed, if $\Phi \in C^2(\RR^2,\RR^2)$ satisfies
\[
\det \Phi' = \partial_1\Phi_1 \partial_2 \Phi_2-\partial_2 \Phi_1 \partial_1 \Phi_2=1,
\]
then $(\tilde f, \tilde g) =\Phi(f,g)$ also satisfies $\nabla \tilde f\times \nabla \tilde g=v$. Moreover, $(\tilde f, \tilde g)$ is also linear plus $(P_1,P_2)$-periodic in $(y,z)$ if $\Phi(f,g)=T(f,g)+\Phi_0(f,g)$, where $T\colon \RR^2\to \RR^2$ is linear and $\Phi_0$ is $(\alpha P_1, P_2)$-periodic.

Note that $T$ is bijective, since otherwise one could find a non-zero linear functional $\ell$ annihilating its range. This would cause $\ell \circ \Phi$ to be periodic, and thus $\ell\circ\Phi$ would have a critical point at which
$\det \Phi'$ would vanish.
As $T$ is bijective, $\Phi$ is proper and hence bijective by the global inversion theorem (using again $\det\Phi'=1$).

Conversely, if $v=\nabla \tilde f\times \nabla \tilde g$ for some $C^2$ functions $\tilde f$ and $\tilde g$, then $\tilde f$ and $\tilde g$ are constant along the streamlines of $v$. 
Hence $(\tilde f(x,y,z), \tilde g(x,y,z))=(\tilde f(0,Y,Z), \tilde g(0,Y,Z))$ with $(Y,Z)=(Y(x,y,z),Z(x,y,z))$ as above, and we obtain
$(\tilde f, \tilde g)=\Phi(f,g)$, where $\Phi=(\tilde f, \tilde g)|_{x=0} \circ \Psi^{-1}$ is $C^2$. Moreover, $\Phi$ is linear plus $(\alpha P_1, P_2)$-periodic and $\det \Phi'=1$.

Let us finally note that the Bernoulli function $H=\frac12 |v|^2+P$ can clearly be written as a function of $(f,g)$ since it is constant on streamlines. Denoting this function also by $H(f,g)$, we find that if $(f,g)$ is transformed to $(\tilde f, \tilde g)=\Phi(f,g)$ with $\Phi$ as above, then $H$ is transformed to $H\circ \Phi^{-1}$.

\bigskip

\noindent {\bf Acknowledgments}. The authors would like to thank the Isaac Newton Institute for Mathematical Sciences, Cambridge, for support and hospitality during the programme `Nonlinear water waves' where work on this paper was undertaken. This work was supported by EPSRC grant no EP/K032208/1. E. Wahl\'en has received funding from the European Research Council (ERC) under the European Union's Horizon 2020 research and innovation programme (grant agreement no 678698). 
Finally, the authors are grateful to the referees for helpful comments.

\pagebreak

\end{document}